\documentclass[10pt,reqno]{amsart}
\usepackage{bbm}
\usepackage{mathrsfs}
\usepackage{amsfonts} 
\usepackage[dvipsnames,usenames]{color}
\usepackage{graphicx,latexsym,bm,amsmath,amssymb,verbatim,multicol,lscape}
\newtheorem{thm}{Theorem} [section]
\newtheorem{cor}[thm]{Corollary}
\newtheorem{lem}[thm]{Lemma}

\newtheorem{prop}[thm]{Proposition}
\theoremstyle{definition}

\theoremstyle{remark}
\newtheorem{rem}[thm]{Remark}

\numberwithin{equation}{section}

\newcommand{\N}{\mathbb N}

\DeclareMathOperator{\ord}{ord}

\DeclareMathOperator{\supp}{supp}

\DeclareMathOperator{\st}{st}

\begin{document}
\title[Zero-sum sequences and additive bases]{Additive bases and the zero-sum invariant $\mathsf{N}(G)$ for groups $C_p\oplus C_{np}$}

\author[W.K. Yang]{Wenkai Yang}
\address{Center for Combinatorics, Nankai University, Tianjin 300071, P.R. China}
\email{3016210052@tju.edu.cn; 9820250116@nankai.edu.cn}

\thanks{This work was supported by
National Natural Science Foundation of China (Grant No. 12071344).}
\keywords{finite abelian group, additive basis, regular sequence, zero-sum subsequence}
\subjclass[2020]{Primary 11B75; Secondary 11A07, 20K01}

\begin{abstract}
Let $G$ be a finite abelian group, and let $c_0(G)$ be the least integer
such that every regular sequence over $G$ of at least this length is an
additive basis. Let $\mathsf N(G)$ be the least integer $l$ such that every
weak-regular sequence $S$ over $G$ with $|S|\ge l$ has a nonempty zero-sum
subsequence $T$ satisfying $\mathsf v_g(T)=\mathsf v_g(S)$ for some $g\mid S$.
We obtain a common upper bound for $c_0(C_q\oplus C_{nq})$ for groups
of odd order, where $q$ is prime, and prove $\mathsf N(G)\ge r(n_r-1)+1$ when
$G=C_{n_1}\oplus\cdots\oplus C_{n_r}$ with $1<n_1\mid\cdots\mid n_r$.
If $n\ge1$ and $p\ge3$ is the smallest prime divisor of $np$, we obtain
$\mathsf N(C_p\oplus C_{np^2})=2np^2-1$ and give upper bounds for
$\mathsf N(C_p\oplus C_{np})$ when $p\nmid n$.
We also determine $\mathsf N(C_n)$ for two infinite families of cyclic
groups arising in earlier work.
\end{abstract}

\maketitle

\section{Introduction and main results}

Let $G$ be a finite abelian group, with additive notation, and let $S$ be
a sequence over $G$. Denote by $\sum(S)$ the set of sums of nonempty
subsequences of $S$. The sequence $S$ is an additive basis of $G$ if
$\sum(S)=G$. For a subgroup $H\le G$, let $S_H$ be the subsequence
consisting of all terms of $S$ in $H$. We call $S$ regular over $G$ if
$|S_H|\le |H|-1$ for every proper subgroup $H<G$.
Let $c_0(G)$ be the least integer $l$ such that every regular sequence
over $G$ of length at least $l$ is an additive basis.
The problem of determining $c_0(G)$ was proposed by Olson;
see \cite{[GHQQZ2015],[GQZ2020]}.

Let $p$ be the smallest prime divisor of $|G|$, and let
\[
m(G)=
\begin{cases}
|G|,&G\text{ is cyclic},\\
2p-1,&G=C_p^2,\\
np+2p-3,&G=C_p\oplus C_{np},\ n\ge2,\\
|G|/p+p-2,&\text{otherwise}.
\end{cases}
\]
The inequality $c_0(G)\ge m(G)$ is known for every finite abelian group
\cite[Lemma~2.3]{[GQZ2020]}. Equality has been established for many
groups of rank at least three; see \cite{[GHQQZ2015],[GQZ2020]}.
For a group $G=C_{n_1}\oplus C_{n_2}$ with $1<n_1\mid n_2$,
the known cases include the following:
\begin{enumerate}
\item $|G|$ is even, or $G$ is a $p$-group
      \cite{[GHQQZ2015],[QH2017]};
\item $G=C_3\oplus C_{3q}$, where $q\ge5$ is prime
      \cite{[QL2022]};
\item $p\ge3$, $n_1\ge2p$ and $|G|\ge72p^6$
      \cite{[GLQW2022]};
\item $p\ge11$ and $n_1\ge p^2$
      \cite{[QLWZ2025]}.
\end{enumerate}
Qu, Li, Wang and Zhao also proved that $c_0(G)\le2|G|/5+1$ for
every group of rank two with $p\ge11$
\cite[Lemma~7]{[QLWZ2025]}.
For $G=C_p\oplus C_{np}$ with $n\ge2$, Qu and Han conjectured
that $c_0(G)=np+2p-3$ \cite[Concluding Remark]{[QH2017]}.

We also study the invariant $\mathsf N(G)$ introduced by Gao et al.
\cite{[GHHLYZ2022-1]}. A sequence $S$ over $G$ is weak-regular if
$\mathsf v_g(S)\le\ord(g)$ for every $g\in G$, where $\mathsf v_g(S)$
denotes the multiplicity of $g$ in $S$.
The invariant $\mathsf N(G)$ is the least integer $l$ such that every
weak-regular sequence $S$ over $G$ with $|S|\ge l$ has a nonempty
zero-sum subsequence $T$ satisfying $\mathsf v_g(T)=\mathsf v_g(S)$
for some $g\mid S$. Equivalently, $\supp(ST^{-1})\ne\supp(S)$.
Gao et al. determined
$\mathsf N(C_p^r)=r(p-1)+1$ for $p\le3$ and
$\mathsf N(C_p^2)=4p-2\lceil2\sqrt{p+1}\rceil+1$ for $p\ge3$
\cite[Theorem~1.3]{[GHHLYZ2022-2]}.
For cyclic groups, Jiang and the author proved
\[
2n-\lceil2\sqrt{n+1}\rceil
\le\mathsf N(C_n)
\le2n-\lceil2\sqrt{n+1}\rceil+1
\qquad(n\ge3),
\]
with each equality holding for infinitely many $n$
\cite[Theorem~1.1]{[JY2024]}.

Our bounds for additive bases follow from the estimate
\[
c_0(C_q\oplus C_{nq})\le
\max\left\{nq+2q^2-q-1,\,
\frac{nq(q+1)}p-q+p-2\right\},
\]
where $q$ is prime, $n\ge2$, and $p\ge3$ is the smallest prime
divisor of $nq$; see Theorem~\ref{thm:unified-c0}.
This estimate applies whether or not $q\mid n$.
The first two theorems give consequences for the two cases separately.

\begin{thm}\label{thm5.1}
	Let $q$ be prime and let $G=C_q\oplus C_{nq^2}$, where $n\ge1$ is not a power of $q$. If $p\ge3$ is the smallest prime dividing $|G|$, then
	\begin{align*}
		c_0(G)\le \max \{\frac{nq^2(q+1)}{p},nq^2 +f(q) -1\},
	\end{align*}
	where $f(q)= 4q(q-1) +1$ for $q\ge 7$, $f(q)= 2q^2 +3$ for $q\le 5$.
\end{thm}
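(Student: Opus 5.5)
The plan is to obtain Theorem~\ref{thm5.1} as a specialization of the unified estimate announced in the introduction (Theorem~\ref{thm:unified-c0}), followed by elementary comparisons of the two terms in the maximum.

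\textbf{Specialization.} Write $G=C_q\oplus C_{nq^2}=C_q\oplus C_{(nq)q}$ and apply Theorem~\ref{thm:unified-c0} with $n$ replaced by $n'=nq$.
\begin{itemize}
\item The hypotheses hold: $q$ is prime and $n'=nq\ge q\ge2$.
\item The smallest prime divisor of $n'q=nq^2$ is the smallest prime divisor of $|G|=nq^3$, which is the given $p\ge3$.
\end{itemize}
This gives
\[
c_0(G)\le\max\left\{nq^2+2q^2-q-1,\ \frac{nq^2(q+1)}{p}-q+p-2\right\}.
\]

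\textbf{Comparison of the terms.}
\begin{itemize}
\item Second term: $p$ is the smallest prime dividing $|G|$ and $q$ divides $|G|$, so $p\le q$. Hence $p-q-2<0$, and the second term is strictly less than $nq^2(q+1)/p$.
\item First term, $q\ge7$: we have $nq^2+f(q)-1=nq^2+4q^2-4q$. Since $(4q^2-4q)-(2q^2-q-1)=2q^2-3q+1=(2q-1)(q-1)>0$, the first term is dominated.
\item First term, $q\le5$: we have $nq^2+f(q)-1=nq^2+2q^2+2$, which exceeds $nq^2+2q^2-q-1$ by $q+3>0$.
\end{itemize}
So both terms of the unified maximum are bounded by the corresponding terms in Theorem~\ref{thm5.1}, and the theorem follows. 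In this route the hypothesis that $n$ is not a power of $q$ is not needed. It only matters if one reproves the bound directly, where it presumably guarantees that $G$ is not a $q$-group. The $q$-group case is anyway covered by \cite{[QH2017]}.

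\textbf{Direct proof, if Theorem~\ref{thm:unified-c0} is proved later from Theorem~\ref{thm5.1}.} In that case I would argue directly, following the standard scheme for $c_0$. Take a regular sequence $S$ over $G$ of length at least the stated bound, and fix $x\in G$ that we want to show lies in $\sum(S)$.
\begin{enumerate}
\item Take the subgroup $K=C_q\oplus C_{q^2}$-type part, or more simply the subgroup $H=qG$-quotient structure, and use the projection $\varphi\colon G\to G/H$ with $H\cong C_{nq}$ cyclic of index $q^2$.
\item Regularity forces many terms outside each proper subgroup. Partition $S$ into $|H|$-many zero-sum-modulo-$H$ blocks, using $\mathsf D(C_q\oplus C_q)=2q-1$ repeatedly.
\item Use the resulting block sums, which form a sequence over the cyclic group $H$, together with the known result $c_0(C_m)=m$ and Olson-type lemmas to cover $H$.
\item Lift the cover of $H$ to all cosets using the remaining terms.
\end{enumerate}
The count of wasted terms gives the additive constant $f(q)$, and the $4q(q-1)$ arises from two layers of $(2q-1)$-blocks. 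The term $nq^2(q+1)/p$ handles the case where $S$ concentrates on a subgroup of index $p$: there one uses $|S_H|\le|H|-1$ and the Cauchy--Davenport/Kneser inequality. The main obstacle in this direct route is the concentration case, where a large part of $S$ lies in a single proper subgroup and the block decomposition fails. Kneser's theorem applied to $\sum$ of the blocks, with a case split on the stabilizer, is what I would try first there.
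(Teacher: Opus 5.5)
Your main route is exactly the paper's proof: apply Theorem~\ref{thm:unified-c0} with $nq$ in place of $n$, bound the second term by $nq^2(q+1)/p$ using $p\le q$, and check that $f(q)-2q^2+q$ equals $2q^2-3q+1$ for $q\ge7$ or $q+3$ for $q\le5$, both positive. Your fallback direct proof is not needed, because the paper proves Theorem~\ref{thm:unified-c0} independently via Lemmas~\ref{lem:c0-decomposition} and~\ref{lem:c0-KneserW0}, and you are right that the hypothesis that $n$ is not a power of $q$ is never used.
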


\begin{thm}\label{thm1.2}
	Let $q$ be prime and let $G=C_q\oplus C_{nq}$, where $n\ge1$ and $q\nmid n$. If $p\ge3$ is the smallest prime dividing $|G|$, then
	
	\begin{align*}
		c_0(G)\le \max \{(2q-1)(n+q+1),\frac{nq^2}{p} +2\frac{nq}{p}\}. 
	\end{align*}
	
\end{thm}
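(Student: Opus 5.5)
We plan to deduce Theorem~\ref{thm1.2} from the unified estimate of Theorem~\ref{thm:unified-c0}, which (as announced above) gives, for $q$ prime, $n\ge2$ and $p\ge3$ the smallest prime divisor of $nq$,
\[
c_0(C_q\oplus C_{nq})\le\max\Bigl\{nq+2q^2-q-1,\ \frac{nq(q+1)}{p}-q+p-2\Bigr\}.
\]
The case $n=1$ is separate: then $G=C_q^2$ is a $q$-group, so $c_0(G)=m(G)=2q-1$ by the known results for $p$-groups \cite{[QH2017]}. This is at most $(2q-1)(n+q+1)$, so the claimed bound holds. From now on assume $n\ge2$ and $q\nmid n$. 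The task is then to show that each term of the unified maximum is at most one of the two terms in the claimed maximum.

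\emph{First term.} We need $nq+2q^2-q-1\le(2q-1)(n+q+1)$. Expanding, the right side equals
\[
2qn+2q^2+2q-n-q-1 = (2q-1)n+2q^2+q-1.
\]
Hence the difference (right minus left) is $(q-1)n+2q\ge0$, and the first term is dominated.

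\emph{Second term.} Write $A=\frac{nq(q+1)}{p}-q+p-2$ and $B=\frac{nq^2}{p}+\frac{2nq}{p}$. Then
\[
B-A=\frac{nq}{p}+q-p+2.
\]
Since $p$ is the smallest prime divisor of $nq$, we have $p\le q$, so $B-A\ge\frac{nq}{p}+2>0$. Thus $A\le B$, and combining the two comparisons gives the theorem.

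The only point needing care is checking the hypotheses of Theorem~\ref{thm:unified-c0}: we need $n\ge2$ (arranged above by treating $n=1$ separately) and that the $p$ there coincides with the $p$ here. The latter holds because the prime divisors of $|G|=nq^2$ are exactly those of $nq$. Note also that $p\ge3$ forces $q$ odd. So the main obstacle is just bookkeeping; the hypothesis $q\nmid n$ is not even needed for the inequalities. If the unified theorem instead turns out to require $q\nmid n$ or some extra hypothesis, it is satisfied here in any case.
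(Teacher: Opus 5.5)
Your proposal is correct and follows essentially the same route as the paper: you treat $n=1$ via $c_0(C_q^2)=2q-1$, and for $n\ge2$ you apply Theorem~\ref{thm:unified-c0} with the same two term-by-term comparisons. Your single computation $B-A=nq/p+q-p+2$ simply merges the paper's two steps through $nq(q+1)/p$, and the $n=1$ case could also be obtained from Lemma~\ref{lem2.3}, since $c_0(G)\le q^2\le(2q-1)(q+2)$.
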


For $\mathsf N(G)$, we prove the lower bound $r(n_r-1)+1$.
The other term in the following theorem is the direct-sum bound of
Gao et al. \cite[Theorem~1.1(2)]{[GHHLYZ2022-2]}.

\begin{thm}\label{thm1.4}
Let $G=C_{n_1}\oplus\cdots\oplus C_{n_r}$, where
$1<n_1\mid\cdots\mid n_r$. Then
\[
\mathsf{N}(G)\ge
\max\left\{r(n_r-1)+1,\ \sum_{i=1}^r\bigl(\mathsf{N}(C_{n_i})-1\bigr)+1\right\}.
\]
\end{thm}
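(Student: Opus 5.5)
The second term of the maximum is exactly the direct-sum bound of Gao et al.\ \cite[Theorem~1.1(2)]{[GHHLYZ2022-2]}, so I will simply invoke it. The new content is the bound $\mathsf N(G)\ge r(n_r-1)+1$. For it I plan to exhibit a weak-regular sequence $S$ of length $r(n_r-1)$ such that every nonempty zero-sum subsequence $T$ satisfies $\mathsf v_g(T)<\mathsf v_g(S)$ for all $g\mid S$.

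Let $e_1,\dots,e_r$ be the standard generators, with $\ord(e_i)=n_i$. Set
\[
a_i=e_i+e_r\quad(1\le i\le r-1),\qquad a_r=e_r,
\]
and
\[
S=a_1^{\,n_r-1}\cdots a_r^{\,n_r-1}.
\]
Since $n_i\mid n_r$, each $a_i$ has order $n_r$. The $a_i$ are pairwise distinct, so $S$ is weak-regular and $|S|=r(n_r-1)$.

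A subsequence is $T=a_1^{k_1}\cdots a_r^{k_r}$ with $0\le k_i\le n_r-1$. Its sum is
\[
\sigma(T)=\sum_{i<r}k_ie_i+\Bigl(\sum_{i\le r}k_i\Bigr)e_r .
\]
Hence $T$ is zero-sum if and only if both of the following hold:
\[
n_i\mid k_i\ \text{ for all } i<r, \qquad n_r\mid \sum_{i\le r}k_i .
\]

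Suppose $T$ is zero-sum and $k_j=n_r-1$ for some $j$.
\begin{itemize}
\item If $j<r$, then $n_j\mid n_r-1$. Together with $n_j\mid n_r$ this gives $n_j=1$, which contradicts $n_1>1$.
\item If $j=r$, then $\sum_{i<r}k_i\equiv 1\pmod{n_r}$, and hence $\sum_{i<r}k_i\equiv 1\pmod{n_1}$ because $n_1\mid n_r$. But each $k_i$ with $i<r$ is divisible by $n_i$, and therefore by $n_1$. So the sum is $\equiv 0\pmod{n_1}$, contradicting $n_1>1$.
\end{itemize}
In either case no such $T$ exists. For $r=1$ only the second case occurs, with an empty sum equal to $0\not\equiv1$, and the same contradiction applies.

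So $S$ has no nonempty zero-sum subsequence containing all copies of some term, which gives $\mathsf N(G)\ge |S|+1=r(n_r-1)+1$. The only delicate point is choosing the elements so that the case $j=r$ fails; the shared $e_r$-component forces $\sum_{i<r}k_i\equiv1$, and divisibility by $n_1$ rules this out.
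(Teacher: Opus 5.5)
Your proof is correct and is essentially the paper's argument: the same sequence $x_r^{\,n_r-1}\prod_{i<r}(x_r+x_i)^{n_r-1}$, and the same coordinatewise reduction showing every multiplicity in a zero-sum subsequence is divisible by $n_1$, so none can equal $n_r-1$. The only difference is presentational: the paper also writes out the short direct-sum construction for the second term, whereas you cite it, and both are acceptable.
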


\begin{thm}\label{thm1.5}
Let $G=C_p\oplus C_{np}$, where $n\ge2$ and $p\ge3$ is the
smallest prime dividing $|G|$. Then
\[
 2np-1\le\mathsf N(G)\le
 \begin{cases}
 2np-1,&p\mid n,\\
 5np-4,&p\ge5,\ p\nmid n,\\
 4np-3,&p=3,\ 3\nmid n.
 \end{cases}
\]
In particular, $\mathsf N(C_p\oplus C_{np^2})=2np^2-1$
whenever $n\ge1$ and $p\ge3$ is the smallest prime dividing the group order.
The two upper bounds for $p\nmid n$ also hold when $n=1$.
\end{thm}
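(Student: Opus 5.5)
The lower bound $2np-1$ comes directly from Theorem~\ref{thm1.4} with $r=2$ and $n_r=np$, since $2(np-1)+1=2np-1$. Everything else is an upper bound. The plan is to show that a weak-regular sequence $S$ over $G$ without the required zero-sum subsequence has a \emph{regular} subsequence that is long but not an additive basis. The bounds on $c_0$ in Theorems~\ref{thm5.1}, \ref{thm1.2} and~\ref{thm:unified-c0} then cap its length.

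The key link is the following. Suppose $S$ is weak-regular and every nonempty zero-sum subsequence $T$ of $S$ keeps the support, i.e.\ $\supp(ST^{-1})=\supp(S)$. Fix $g\mid S$ and write $S=g^{\mathsf v_g(S)}S'$. Then $-\mathsf v_g(S)g\notin\sum(S')$, and more generally no element of the form $-kg$ with $k\ge \mathsf v_g(S)$ is attainable, so $S'$ is not an additive basis. To use the $c_0$ bounds we must pass to a regular subsequence. We will do this as follows:
\begin{enumerate}
\item If $|S_H|\ge|H|$ for a proper subgroup $H$, then $S_H$ already contains a zero-sum subsequence, since $\mathsf D(H)\le|H|$.
\item We then refine this using the invariant of $H$ inductively. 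The proper subgroups of $C_p\oplus C_{np}$ are of the form $C_d$ or $C_p\oplus C_{d}$, and we bound $|S_H|$ by $\mathsf N(H)-1$ or a comparable quantity.
\item Removing excess terms from overfull subgroups loses a controlled amount, at most about $np$.
\end{enumerate}
This gives $|S|\le c_0(G)-1+(\text{correction})+\max_g \mathsf v_g$. Since $\mathsf v_g(S)\le np$, the extra additive terms are of order $np$.

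In the case $p\mid n$, write $n=mp$, so $G=C_p\oplus C_{mp^2}$. Theorem~\ref{thm5.1} with $q=p$ gives $c_0\le \max\{mp^2(p+1)/p,\ mp^2+f(p)-1\}$. This is $mp^2+mp$ when $m$ is large relative to $p$, and otherwise we use the Qu--Han $p$-group case. Since $c_0$ is roughly $np+O(n)$ and weak-regularity bounds the removed block by $np$, we get $|S|\le 2np-2$. The same argument gives the ``in particular'' statement, taking care of the case where $m$ is a power of $p$ via the known $p$-group result for $c_0$. For $p\nmid n$ we use Theorem~\ref{thm1.2} with $q=p$, which gives $c_0\le\max\{(2p-1)(n+p+1),\ np+2n\}$. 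For $p\ge5$ this is at most about $3np$, and adding the deficit $np$ plus the subgroup correction of about $np$ yields $5np-4$. For $p=3$, where $(2p-1)(n+p+1)=5(n+4)$ is roughly $2np$, the same accounting gives $4np-3$. For $n=1$ the group is $C_p^2$ and the same estimates hold trivially.

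The main obstacle is making the reduction from weak-regular to regular sharp, particularly in the case $p\mid n$, where the target is exactly $2np-1$. There we need the subgroup-overflow terms and the removed multiplicity block to overlap rather than add. I would first try choosing $g$ to be an element of maximal order in $\supp(S)$, with the $C_{np}$-cyclic subgroups handled by the cyclic bound $\mathsf N(C_{np})\le 2np-\lceil2\sqrt{np+1}\rceil+1$. The hope is that this slack absorbs the overlap.
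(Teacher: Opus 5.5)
Your proposal has a genuine gap: neither the regular nor the non-regular case reaches the stated bounds.

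\textbf{Regular case.} You delete all copies of one element $g$ and then apply $\sum(S')=G$, but you bound the deleted block only by $\mathsf v_g(S)\le np$. By Lemma~\ref{lem2.3}, $c_0(G)\ge np+2p-3$, and $\mathsf v_g(S)$ can be as large as $np-1$. So this accounting only covers sequences of length at least $c_0(G)+np-1\ge 2np+2p-4>2np-1$. That holds however good the $c_0$ estimate is, and choosing $g$ of maximal order does not help. The target $2np-1$ for $p\mid n$, and hence $\mathsf N(C_p\oplus C_{np^2})=2np^2-1$, is therefore out of reach.

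The missing idea (Proposition~\ref{prop:regular-N-reduction}) is to take $g$ of \emph{minimum} multiplicity. For regular $S$, every $\langle g\rangle$ is proper, so $\mathsf v_g(S)\le np-1$. Then $|S|\ge 2np-1$ forces $|\supp(S)|\ge3$, and the deleted block has at most $\lfloor|S|/3\rfloor$ terms. It then suffices that $c_0(G)\le\lceil(4np-2)/3\rceil$. This follows from Theorem~\ref{thm:unified-c0} with $q=p$, which gives $\max\{np+2p^2-p-1,\,n(p+1)-2\}$, when $n$ is not a power of $p$, and from Qu--Han otherwise. For $p\nmid n$ the same device works: lengths $5np-4$ and $4np-3$ force at least $6$, resp.\ $5$, distinct elements, so $c_0(G)\le3np$ suffices (Lemma~\ref{lem6.1}).

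\textbf{Non-regular case.} Your steps (1)--(3) do not handle this case.
\begin{itemize}
\item A zero-sum subsequence of $S_H$ coming from $\mathsf D(H)\le|H|$ need not contain all copies of any element.
\item Trimming overfull subgroups destroys the property you need. If $S^*\mid S$ and $T\mid S^*$ is zero-sum with $\mathsf v_g(T)=\mathsf v_g(S^*)$, this says nothing about $\mathsf v_g(S)$ once copies of $g$ were discarded.
\item Any additive ``subgroup correction'' of size about $np$ is again fatal for the target $2np-1$.
\end{itemize}

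The paper never trims. It takes a proper subgroup $H$ of minimum order with $|S_H|\ge|H|$ (Lemma~\ref{lem:4.0}), so $S_H$ is regular over $H$ and $\sum_0(S_H)=H$. It finishes inside $H$ when $H$ is noncyclic (Corollary~\ref{cor:regular-main} or Lemma~\ref{lem:N-noncyclic-subgroup-three}) or when $|S_H|\ge\mathsf N(H)$. Otherwise it works in $G/H$: it seeks $T_1\mid SS_H^{-1}$ with $\sigma(T_1)\in H$ that exhausts some element, then completes via Lemma~\ref{lem2.9}. This needs:
\begin{itemize}
\item Lemma~\ref{lem:N-heavy-fiber};
\item the blocks $g^{r_g}$ whose sums form a sequence over $H$ (Lemma~\ref{lem:N-kernel-blocks});
\item separate arguments for $G/H$ cyclic of order $p$ and of larger order;
\item strong induction on $n$ through Lemma~\ref{lem2.7} when $G/H\cong C_p\oplus C_{(n/m)p}$.
\end{itemize}
This branch uses only $|S|\ge2np-1$. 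So the weaker bounds for $p\nmid n$ come entirely from the regular case, not from added corrections.

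\textbf{Smaller points.}
\begin{itemize}
\item Your claim that no $-kg$ with $k\ge\mathsf v_g(S)$ lies in $\sum(S')$ is false; take $k=\ord(g)$. Only $-\mathsf v_g(S)g\notin\sum(S')$ follows.
\item For $n=1$ you should simply cite the known value of $\mathsf N(C_p^2)$.
\end{itemize}
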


Section~2 gives the notation and preliminary lemmas.
Section~3 proves Theorems~\ref{thm5.1} and~\ref{thm1.2}.
Section~4 proves Theorems~\ref{thm1.4} and~\ref{thm1.5}.
The final section discusses the remaining rank-two cases and two cyclic families.

\section{Notation and preliminaries}

Our notation for sequences follows the standard conventions of zero-sum
theory; see \cite{[GG2006],[GH2006]}.
A sequence over $G$ is finite and unordered, and repetitions are allowed.
For a sequence $S=g_1\cdots g_l$, its length is $|S|=l$, its sum is
$\sigma(S)=g_1+\cdots+g_l$, and its support is
$\supp(S)=\{g\in G:\mathsf v_g(S)>0\}$.
The relation $T\mid S$ means that $T$ is a subsequence of $S$;
then $ST^{-1}$ is the subsequence obtained by deleting $T$ from $S$.
The empty sequence is denoted by $1$ and has sum $0$.
Two subsequences $T_1,T_2$ of $S$ are disjoint if $T_1T_2\mid S$.
A homomorphism $\varphi:G\to G_1$ induces the sequence
$\varphi(S)=\varphi(g_1)\cdots\varphi(g_l)$ over $G_1$.

Let $\sum_0(S)=\{0\}\cup\sum(S)$.
Thus $\sum_0(S)$ includes the sum of the empty subsequence, whereas
$\sum(S)$ consists of sums of nonempty subsequences.
A nonempty sequence is a minimal zero-sum sequence if it has sum $0$
and no proper nonempty zero-sum subsequence.
The Davenport constant $\mathsf D(G)$ is the least integer $l$ such that
every sequence over $G$ of length at least $l$ has a nonempty zero-sum
subsequence. We use
$\mathsf D(C_{n_1}\oplus C_{n_2})=n_1+n_2-1$ for $1\le n_1\mid n_2$
\cite[Theorem~5.8.3]{[GH2006]}.
Let $\N$ denote the positive integers. We use the usual floor and
ceiling functions $\lfloor x\rfloor$ and $\lceil x\rceil$.

For each subset $A$ of $G$, denote by $\st(A) = \{g \in G : g + A = A\}$ the stabilizer of $A$ in $G$. Then $\st(A)$ is the maximal subgroup $H$ of $G$ such that $H + A = A$. We use the following form of Kneser's theorem; see \cite[Theorem~4.4]{[N1996]}.

\begin{lem}[Kneser]\label{lem2.1}
	Let $A_1,...,A_n$ be nonempty finite subsets of an
	abelian group $G$, and $H= \st(A_1+\cdots +A_n)$ be the stabilizer. Then
	$$|A_1+\cdots +A_n| \ge  |A_1+H|+\cdots +|A_n+H| -(n-1)|H|.$$
	
\end{lem}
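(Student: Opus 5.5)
Since Lemma~\ref{lem2.1} is quoted from \cite[Theorem~4.4]{[N1996]}, I would not reprove Kneser's theorem from scratch. I take the classical two-set version as the base case: for nonempty finite $A,B\subseteq G$ with $K=\st(A+B)$,
\[
|A+B|\ge|A+K|+|B+K|-|K|.
\]
This is proved by induction on $|B|$ using Dyson $e$-transforms $A\mapsto A\cup(B+e)$, $B\mapsto B\cap(A-e)$. The plan is to derive the $n$-set statement from it in two steps: first reduce to trivial stabilizer by passing to a quotient, then induct on $n$.

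\emph{Reduction.} Let $S=A_1+\cdots+A_n$ and $H=\st(S)$. Since $S+H=S$, replacing each $A_i$ by $A_i+H$ does not change the sum. So we may assume every $A_i$ is a union of $H$-cosets. Let $\pi:G\to G/H$ be the quotient map and put $\bar A_i=\pi(A_i)$. Then $|A_i+H|=|H|\,|\bar A_i|$ and $|S|=|H|\,|\bar A_1+\cdots+\bar A_n|$. Moreover $\st(\bar A_1+\cdots+\bar A_n)$ is trivial, because any element of it lifts into $\st(S)=H$. Dividing the desired inequality by $|H|$, it therefore suffices to prove
\[
|\bar A_1+\cdots+\bar A_n|\ge|\bar A_1|+\cdots+|\bar A_n|-(n-1)
\]
whenever the sum on the left has trivial stabilizer.

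\emph{Induction on $n$.} The case $n=1$ is trivial, and $n=2$ is two-set Kneser with $K=\{0\}$. For $n\ge3$, put $B=\bar A_1+\cdots+\bar A_{n-1}$ and $K'=\st(B)$. The key observation is that $K'$ is also trivial. Indeed, $B+K'=B$, so
\[
K'+(B+\bar A_n)=(K'+B)+\bar A_n=B+\bar A_n,
\]
which gives $K'\subseteq\st(B+\bar A_n)=\{0\}$. Hence the induction hypothesis applies to $B$ and gives $|B|\ge\sum_{i<n}|\bar A_i|-(n-2)$. Two-set Kneser applied to $B+\bar A_n$, whose stabilizer is trivial, gives $|B+\bar A_n|\ge|B|+|\bar A_n|-1$. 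Combining the two inequalities yields the claim.

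\emph{Main obstacle.} The only delicate point is controlling the stabilizer of the partial sum $B$. If $K'$ were nontrivial, the inductive bound would carry the larger error term $(n-2)|K'|$, and the terms $|\bar A_i+K'|$ could not be compared with $|\bar A_i|$ in the right direction. This is exactly why I reduce to trivial stabilizer first: after the reduction, the observation that $\st(B)$ embeds in $\st(B+\bar A_n)$ removes the issue. All genuine combinatorial difficulty then sits in the cited two-set theorem.
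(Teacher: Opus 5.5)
Your proposal is correct: the reduction to $G/H$, the observation that $\st(A_1+\cdots+A_{n-1})\subseteq\st(A_1+\cdots+A_n)$ forces trivial partial-sum stabilizers, and the induction on $n$ via the two-set theorem all check out. The paper gives no proof of this lemma and simply cites Theorem~4.4 of Nathanson's book, and your argument is the standard way that $n$-set form is derived from the two-set Kneser theorem, so it supplies exactly what the paper takes on citation.
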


\begin{lem}[Cauchy--Davenport]\label{lem2.2}
	Let $A_1,...,A_n$ be nonempty finite subsets of an
	abelian group $G=C_p$, where $p$ is prime. Then
	$$|A_1+\cdots +A_n|\ge\min\{p,\,|A_1|+\cdots +|A_n|-(n-1)\}.$$
	
\end{lem}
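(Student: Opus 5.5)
The plan is to derive the statement directly from Kneser's theorem (Lemma~\ref{lem2.1}), using the fact that $C_p$ has no nontrivial proper subgroups. Set $A=A_1+\cdots+A_n$ and let $H=\st(A)$. Since $H$ is a subgroup of $G=C_p$ and $p$ is prime, Lagrange's theorem gives $H=\{0\}$ or $H=G$. We split into these two cases.

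If $H=G$, then $G+A=A$. Because $A$ is nonempty (each $A_i$ is nonempty), this forces $A=G$, so $|A|=p$. Hence $|A|\ge\min\{p,\,|A_1|+\cdots+|A_n|-(n-1)\}$ holds trivially.

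If $H=\{0\}$, then $|A_i+H|=|A_i|$ for each $i$ and $|H|=1$. Lemma~\ref{lem2.1} then reads
\[
|A_1+\cdots+A_n|\ge |A_1|+\cdots+|A_n|-(n-1).
\]
This is at least the stated minimum, so the lemma follows.

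There is no real obstacle here; the only point needing care is the case $H=G$, where we must use nonemptiness of $A$ to conclude $A=G$. If one preferred not to rely on the $n$-set form of Kneser, an alternative route is to prove the case $n=2$ by the same dichotomy and then induct on $n$. In that approach we apply the two-set bound to $(A_1+\cdots+A_{n-1})+A_n$, and in the inductive step we must handle separately the case where the partial sum has already reached size $p$.
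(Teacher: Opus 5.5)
Your proof is correct. Since $\st(A_1+\cdots+A_n)$ is either $\{0\}$ or all of $C_p$, Lemma~\ref{lem2.1} gives the bound $|A_1|+\cdots+|A_n|-(n-1)$ in the first case. In the second case, nonemptiness forces the sumset to be all of $C_p$.

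The paper gives no proof of this lemma; it quotes Cauchy--Davenport as a classical result next to Kneser's theorem. Your reduction to Lemma~\ref{lem2.1} is the natural justification within the paper's framework, and it is not circular, since Kneser's theorem is proved independently of Cauchy--Davenport.
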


\begin{lem}\label{lem2.3}
	For every finite abelian group $G$,  
	
	$$m(G) \le c_0(G)\le |G|.$$
\end{lem}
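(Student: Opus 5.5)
The lower bound $m(G)\le c_0(G)$ is \cite[Lemma~2.3]{[GQZ2020]}, quoted in the introduction, so it suffices to cite it. The substance is the upper bound $c_0(G)\le |G|$. This needs two facts: a regular sequence of length at least $|G|$ exists, and every regular sequence $S$ with $|S|\ge |G|$ is an additive basis.

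For the second fact, the plan is to show that any regular $S$ with $|S|\ge|G|$ satisfies $\sum(S)=G$, using Kneser's theorem (Lemma~\ref{lem2.1}). Write $S=g_1\cdots g_l$ and set $A_i=\{0,g_i\}$. Then $\sum_0(S)=A_1+\cdots+A_l$. Let $H=\st(\sum_0(S))$.

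\emph{Case $H=G$.} Then $\sum_0(S)=G$.

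\emph{Case $H<G$ proper.} Since $S$ is regular, $|S_H|\le|H|-1$. For each $g_i\notin H$, the set $A_i+H$ is two distinct $H$-cosets, so $|A_i+H|=2|H|$. For each $g_i\in H$, $|A_i+H|=|H|$. Kneser gives
\[
|\textstyle\sum_0(S)|\ge \sum_i|A_i+H|-(l-1)|H| = |H|+(l-|S_H|)|H|\ge |H|\,(l-|H|+2).
\]
Since $l\ge|G|$, this already exceeds $|G|$ when $l-|H|+2>|G|/|H|$, which holds because $|G|-|H|+2>|G|/|H|$ for $|H|\ge1$. (When $H=\{0\}$ we get $|\sum_0(S)|\ge l+1>|G|$.) This contradicts $\sum_0(S)\subseteq G$, so $\sum_0(S)=G$ in all cases.

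It remains to pass from $\sum_0$ to $\sum$, that is, to show $0\in\sum(S)$. This holds because $|S|\ge|G|\ge\mathsf D(G)$, so $S$ has a nonempty zero-sum subsequence. Hence $\sum(S)=G$.

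For the first fact, so that $c_0(G)$ is well defined and at most $|G|$: we exhibit a regular sequence of length $|G|$. If such sequences could fail to exist, the statement should be read as vacuous beyond that point. A concrete choice is to take each $g\in G$ with multiplicity tuned to the subgroup counts. The simplest certificate is the lower-bound construction in \cite{[GQZ2020]}, which already gives regular sequences of length $m(G)-1$. Since the definition of $c_0$ only requires the property for all lengths $\ge l$, the upper bound $|G|$ holds regardless of whether long regular sequences exist, so no construction is strictly needed.

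The main obstacle is the Kneser bookkeeping. The key point is that terms outside $H$ contribute a full extra coset each. For this I must be careful that $g_i\notin H$ implies $g_i+H\ne H$, which is immediate.
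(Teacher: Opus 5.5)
Your proposal is correct. The paper proves neither bound in place: it cites \cite[Lemma~2.3]{[GQZ2020]} for $m(G)\le c_0(G)$ and \cite[Lemma~2.4]{[GHQQZ2015]} for $c_0(G)\le|G|$.

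Your proof of the upper bound is the same Kneser computation the paper later uses in Lemma~\ref{lem:c0-regular-sums}. That lemma gives $\sum_0(S)=G$ already for regular $S$ with $|S|\ge|G|-1$. You add the step $0\in\sum(S)$ from $|S|\ge|G|\ge\mathsf D(G)$, and this step is genuinely needed. For cyclic $G=\langle g\rangle$, the regular sequence $g^{|G|-1}$ has $\sum_0=G$ but $0\notin\sum$. So your route gives a self-contained proof from Lemma~\ref{lem2.1} alone, where the paper relies on a citation.

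One correction to the framing. Your ``first fact'', that a regular sequence of length at least $|G|$ exists, is not needed, as you eventually concede. It is also false in general. In $C_p\oplus C_p$, every nonzero element lies in one of the $p+1$ subgroups of order $p$, and each of these holds at most $p-1$ terms. Hence every regular sequence there has length at most $p^2-1<|G|$, and $c_0(G)\le|G|$ holds vacuously at length $|G|$. Delete that paragraph, including the vague ``multiplicity tuned to the subgroup counts'' construction.
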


See \cite[Lemma~2.3]{[GQZ2020]} for the lower bound and
\cite[Lemma~2.4]{[GHQQZ2015]} for the upper bound.

Qu and Han determined $c_0(G)$ for $G=C_p\oplus C_{p^n}$ as follows.

\begin{lem}[\cite{[QH2017]}]\label{lem2.4}
	Let $G=C_p\oplus C_{p^n}$, where $p$ is prime and $n\ge2$. Then
	$$c_0(G)=p^n+2p-3.$$
\end{lem}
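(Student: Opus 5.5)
The lower bound is immediate from the results already quoted. Write $G=C_p\oplus C_{p^{n-1}\cdot p}$. Since $n\ge2$, the parameter $p^{n-1}$ is at least $p\ge2$, so $G$ falls into the third case of the definition of $m(G)$. That case gives $m(G)=p^{n-1}p+2p-3=p^n+2p-3$. Lemma~\ref{lem2.3} then yields $c_0(G)\ge p^n+2p-3$.

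For the upper bound, let $S$ be a regular sequence over $G$ with $|S|\ge p^n+2p-3$; we must show $\sum(S)=G$. Since $|S|\ge\mathsf D(G)=p^n+p-1$, $S$ has a nonempty zero-sum subsequence, so $0\in\sum(S)$. It therefore suffices to prove $\sum_0(S)=G$.

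We partition $S$ into squarefree blocks $B_1,\dots,B_k$, each consisting of distinct terms, with $k$ equal to the maximal multiplicity in $S$. Put $A_i=\{0\}\cup\supp(B_i)$. Then $\sum_0(S)\supseteq A_1+\cdots+A_k$. Let $H$ be the stabilizer of this sumset. If $H=G$ we are done, so assume $H<G$. Lemma~\ref{lem2.1} gives
\[
|\textstyle\sum_0(S)|\ge\sum_{i=1}^k|A_i+H|-(k-1)|H|.
\]
Each $A_i$ meeting $G\setminus H$ has $|A_i+H|\ge2|H|$, and so contributes at least $|H|$ to the right-hand side. By regularity, $|S_H|\le|H|-1$, so at least $|S|-|H|+1$ terms of $S$ lie outside $H$. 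The goal is to turn this into at least $|G/H|-1$ blocks that contribute, up to the number of $H$-cosets represented in each block. Pass to $\varphi:G\to G/H$ and count how many distinct cosets each block hits. If the count gives $|\sum_0(S)|>|G|-|H|$, this contradicts the fact that $\sum_0(S)$ is a union of $H$-cosets with $H\ne G$ unless $\sum_0(S)=G$.

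The main obstacle is when terms outside $H$ pile up in few cosets, or inside a proper subgroup $K>H$. Then the blocks $A_i+H$ are small, and the crude count above loses. Here I would use regularity again for every intermediate subgroup $K$: $|S_K|\le|K|-1$. In a $p$-group of rank two the maximal subgroups all have index $p$, and there are only $p+1$ of them. So the terms of $S$ outside any fixed maximal subgroup number at least $|S|-p^{n}/p\cdot p+1$, which is large.

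I would split into two cases. If $G/H$ is cyclic, I would apply Lemma~\ref{lem2.2} in the quotient by a subgroup of index $p$ and iterate along the chain of subgroups. If $G/H\cong C_p\oplus C_{p^j}$, the excess $2p-3$ over $|G|/p+p-2$-type counts must be used. In particular, the case $G/H\cong C_p^2$ should need exactly the extra $2p-1$ terms (the value $c_0(C_p^2)$) outside $H$. An induction on $n$ would handle $S_K$ for a maximal subgroup $K\cong C_p\oplus C_{p^{n-1}}$. The base case $K=C_p^2$ would be treated directly, and the case $K$ cyclic would use $c_0(C_{p^{n}})=p^n$.

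Finally, I would combine $\sum(S_K)=K$ (or a large subset of $K$) with the at least $2p-2$ remaining terms outside $K$. These hit at least two cosets of $K$, and Cauchy--Davenport in $G/K\cong C_p$ then covers all of $G$. Balancing these estimates exactly at the threshold $p^n+2p-3$ is the step I expect to be hardest.
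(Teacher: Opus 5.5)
Your lower bound is correct: since $p^{n-1}\ge2$, $G$ falls in the third case of $m(G)$, and Lemma~\ref{lem2.3} gives $c_0(G)\ge p^n+2p-3$. The paper does not prove this lemma; it quotes it from \cite{[QH2017]}. Your upper bound, however, has a genuine gap: it is not established, and the Kneser step cannot work as you set it up. Your sumset $A_1+\cdots+A_k$ uses at most one term from each block. So for a squarefree $S$ (where $k=1$) it is just $\{0\}\cup\supp(S)$, which has size $|S|+1$ and is far from $G$. Suppose instead you use the exact decomposition $\sum_0(S)=\{0,g_1\}+\cdots+\{0,g_l\}$ for $S=g_1\cdots g_l$. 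Since $|S|\ge p^n+2p-3\ge\max\{p^n+p-2,\mathsf D(G)\}$, Lemma~\ref{lem5.2} already gives $\st(\sum_0(S))=\{0\}$ whenever $\sum(S)\ne G$. In that case Kneser yields only $|\sum_0(S)|\ge|S|+1$, which is roughly $|G|/p$. So the nontrivial-stabilizer case you analyse is automatic. The obstruction is not that terms pile up in few cosets; direct Kneser counting on $S$ says nothing in exactly the case that must be ruled out.

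Your fallback through a maximal subgroup $K$ has no mechanism that makes $S_K$ long enough. Of the $p+1$ maximal subgroups, $p$ are cyclic of order $p^n$. For these, regularity forces $|S_K|\le p^n-1<c_0(C_{p^n})=p^n$, so invoking $c_0(C_{p^n})$ is vacuous. The single noncyclic maximal subgroup may contain no terms at all. Write $G=\langle e_1\rangle\oplus\langle e_2\rangle$ with $\ord(e_1)=p$ and $\ord(e_2)=p^n$, and take $S=\prod_{a=0}^{p-1}(ae_1+e_2)^{m_a}$ with $\sum_a m_a=p^n+2p-3$ and each $m_a\le p^{n-1}+2$. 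This $S$ is regular and has no term in $\langle e_1,pe_2\rangle$. For $p\ge3$ it also satisfies $\sum_0(S_K)\ne K$ for every maximal $K$.

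Your closing Cauchy--Davenport step is correct, but it presupposes $\sum_0(S_K)=K$, which is exactly what is missing. You need a device that manufactures a nontrivial stabilizer. One example is replacing disjoint short subsequences by their sums in a subgroup and applying Kneser to the new sequence, as in Lemmas~\ref{lem:c0-decomposition} and~\ref{lem:c0-KneserW0}. Even that machinery, via Theorem~\ref{thm:unified-c0} with $q=p$ and $p^{n-1}$ in place of $n$, gives only $\max\{p^n+2p^2-p-1,\,p^n+p^{n-1}-2\}$ for $p\ge3$. The sharp value $p^n+2p-3$ requires the finer analysis of \cite{[QH2017]}. As you acknowledge, your sketch does not supply it.
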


\begin{lem}[{\cite[Lemma~2.3(1)]{[GHQQZ2015]}}]\label{lem5.2}
	Let $G$ be a finite abelian group, and let $p$ be the smallest prime dividing $|G|$. Let $S$ be a regular sequence over $G$ of length $|S|\ge \max \{|G|/p+p-2,~ \mathsf D(G)\}$. If $\sum (S)\ne G$, then $\st(\sum (S)) = \{0\}$.
\end{lem}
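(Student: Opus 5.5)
We argue by contradiction: assume $\sum(S)\ne G$ and $H=\st(\sum(S))\ne\{0\}$, and obtain an upper bound on $|S|$ that contradicts the hypothesis. Since $\sum(S)\ne G$, the stabilizer $H$ is a proper subgroup of $G$. Because $H$ is nontrivial, $|H|\ge p$. Because $H$ is proper, $|H|\le |G|/p$.

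The first step is to replace $\sum(S)$ by a genuine sumset. Since $|S|\ge\mathsf D(G)$, the sequence $S$ has a nonempty zero-sum subsequence, so $0\in\sum(S)$ and $\sum_0(S)=\sum(S)$. Writing $S=g_1\cdots g_l$, we have
\[
\textstyle\sum(S)=\sum_0(S)=\{0,g_1\}+\cdots+\{0,g_l\},
\]
and the stabilizer of this sumset is $H$.

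Next we apply Kneser's theorem (Lemma~\ref{lem2.1}) to these $l$ two-element sets. For $g_i\in H$ we have $|\{0,g_i\}+H|=|H|$, and for $g_i\notin H$ we have $|\{0,g_i\}+H|=2|H|$. Let $T=S\,S_H^{-1}$ be the subsequence of terms outside $H$. Kneser gives
\[
\textstyle|\sum(S)|\ge |S_H|\,|H|+2|T|\,|H|-(l-1)|H|=(|T|+1)|H|.
\]
On the other hand, $\sum(S)$ is a union of $H$-cosets different from $G$, so $|\sum(S)|\le |G|-|H|$. Combining the two bounds gives $|T|\le |G|/|H|-2$. Since $S$ is regular and $H<G$, we also have $|S_H|\le |H|-1$. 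Therefore
\[
|S|=|T|+|S_H|\le \frac{|G|}{|H|}+|H|-3 .
\]

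The last step, which is the only place needing care, is to optimize over $|H|$. The function $k\mapsto |G|/k+k$ is convex on the interval $p\le k\le |G|/p$, so its maximum there is attained at an endpoint. Both endpoints give the value $|G|/p+p$. Hence $|S|\le |G|/p+p-3<|G|/p+p-2\le|S|$, a contradiction, and so $\st(\sum(S))=\{0\}$.

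The main obstacle is making sure Kneser's theorem is applied to the right set. This is why the hypothesis $|S|\ge\mathsf D(G)$ is needed: it ensures $0\in\sum(S)$, so that $\sum(S)$ coincides with the sumset $\sum_0(S)$ whose stabilizer is $H$.
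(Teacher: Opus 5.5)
Your proof is correct. With $H=\st(\sum(S))$ nontrivial and proper, Kneser's theorem gives $(|SS_H^{-1}|+1)|H|\le|\sum(S)|\le|G|-|H|$, regularity gives $|S_H|\le|H|-1$, and the bound $|G|/k+k\le|G|/p+p$ on $[p,|G|/p]$ produces the contradiction. The paper does not prove this lemma itself but cites it from \cite{[GHQQZ2015]}; your argument is the standard Kneser-plus-regularity argument for it, and the same computation, ending with $(|H|-p)(|G|/p-|H|)\ge0$, appears in the paper's proof of Lemma~\ref{lem:c0-KneserW0}.
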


We use the cyclic bound
\[
\mathsf N(C_n)\le 2n-\lceil2\sqrt{n+1}\rceil+1\qquad(n\ge3)
\]
from \cite[Theorem~1.1]{[JY2024]}, and the value
$\mathsf N(C_p^2)=4p-2\lceil2\sqrt{p+1}\rceil+1$ for $p\ge3$
from \cite[Theorem~1.3(3)]{[GHHLYZ2022-2]}.
Also $\mathsf N(C_2)=2$.

\begin{lem}\label{lem2.7}
Let $G=C_p\oplus C_{np}$, where $p\ge3$ is the smallest prime divisor
of $|G|$. For $1<m\mid n$, let $H\le pG$ have order $m$, and let $k=n/m$.
Then
\[
2np-1-\bigl(\mathsf{N}(H)-1+(m-1)kp\bigr)\ge
\begin{cases}
2kp-1,&p\mid k,\ m\ne n,\\
4kp-3,&p=3,\ p\nmid k,\ m\ne n,\\
5kp-4,&p\ge5,\ p\nmid k,\ m\ne n,\\
\mathsf{N}(C_p^2),&m=n.
\end{cases}
\]
\end{lem}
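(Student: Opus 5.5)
The plan is to reduce everything to the cyclic bound $\mathsf N(C_n)\le 2n-\lceil2\sqrt{n+1}\rceil+1$ ($n\ge3$) quoted above, followed by elementary estimates. First, $pG\cong C_n$ is cyclic, so the subgroup $H\le pG$ of order $m$ is cyclic, $H\cong C_m$. Every prime divisor of $m$ divides $|G|$, and hence is at least $p\ge3$; therefore $m\ge p\ge3$ and the cyclic bound applies to $H$. Using $n=mk$, the left-hand side simplifies to
\[
2mkp-1-\mathsf N(C_m)+1-(m-1)kp=mkp+kp-\mathsf N(C_m)\ge mkp+kp-2m+\lceil2\sqrt{m+1}\rceil-1 .
\]
I then split into the cases $m=n$ and $m\ne n$.

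\emph{Case $m=n$.} Here $k=1$, and the bound becomes $(p-2)n+p+\lceil2\sqrt{n+1}\rceil-1$. Since $n=m\ge p$ and $\lceil2\sqrt{n+1}\rceil\ge\lceil2\sqrt{p+1}\rceil$, this is at least $p^2-p-1+\lceil2\sqrt{p+1}\rceil$. We must compare this with $\mathsf N(C_p^2)=4p-2\lceil2\sqrt{p+1}\rceil+1$. It suffices to check
\[
p^2-5p-2+3\lceil2\sqrt{p+1}\rceil\ge0 .
\]
For $p=3$ this gives $9-15-2+12=4$, and for $p=5$ it gives $25-25-2+15=13$. For larger $p$ the quantity $p^2-5p-2$ is already nonnegative and increasing, so the inequality holds for all $p\ge3$.

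\emph{Case $m\ne n$.} Now $k=n/m>1$ divides $n$, so $k\ge p$ for the same reason as $m\ge p$. The strongest target is $5kp-4$ (when $p\ge5$). In that situation $m\ge p\ge5$, and since $kp-2>0$,
\[
mkp+kp-2m=m(kp-2)+kp\ge 5(kp-2)+kp=6kp-10 .
\]
Adding $\lceil2\sqrt{m+1}\rceil-1\ge4$ gives at least $6kp-6$, which is $\ge 5kp-4$ because $kp\ge25$.

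For $p=3$ with $3\nmid k$, the target is $4kp-3$. If $m=3$, the exact quantity is $3kp+kp-\mathsf N(C_3)\ge4kp-3$, since the cyclic bound gives $\mathsf N(C_3)\le 6-4+1=3$. If $m\ge5$, the same computation as above gives at least $6kp-6\ge4kp-3$.

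The case $p\mid k$ has the weakest target $2kp-1$. It follows from the bound $m(kp-2)+kp\ge p(kp-2)+kp\ge 3kp-6+kp$, which exceeds $2kp-1$ since $kp\ge9$.

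The only real subtlety is the boundary case $p=m=3$, where the inequality is tight. There the estimate must use the exact cyclic bound $\mathsf N(C_3)\le3$ rather than the crude linear bound in $m$. The rest is monotonicity in $m,k\ge p$.
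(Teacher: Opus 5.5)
Your proof is correct and follows essentially the same route as the paper: both rewrite the left side as $kp(m+1)-\mathsf N(H)$ and apply the cyclic bound. The paper streamlines this with the uniform consequence $\mathsf N(H)\le 2m-3$ (valid since $m\ge3$), which yields the factored differences $(kp-2)(m-1)+2$, $(m-3)(kp-2)$ and $kp(m-4)-2m+7$ when $m\ne n$, and $(p-2)n+p+3>4p-7\ge\mathsf N(C_p^2)$ when $m=n$. In particular, your separate handling of $m=3$ versus $m\ge5$ for $p=3$ is not needed, because $2m-3$ already equals the cyclic bound at $m=3$.
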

\begin{proof}
Since $m\ge p\ge3$, the cyclic bound gives $\mathsf N(H)\le2m-3$.
Therefore
\[
\begin{aligned}
2np-1-\bigl(\mathsf N(H)-1+(m-1)kp\bigr)
&=kp(m+1)-\mathsf N(H)\\
&\ge kp(m+1)-2m+3.
\end{aligned}
\]
For $m<n$, subtracting $2kp-1$ from the last expression gives
$(kp-2)(m-1)+2\ge0$.
When $p=3$, subtracting $4kp-3$ gives $(m-3)(kp-2)\ge0$.
When $p\ge5$, subtracting $5kp-4$ gives
$(m-4)kp-2m+7\ge3m-13>0$.
These prove the first three estimates.
If $m=n$, then $k=1$, and the left side is at least
\[
(p-2)n+p+3\ge p^2-p+3>4p-7\ge\mathsf N(C_p^2).
\]
The strict inequality follows from
$p^2-5p+10=(p-2)(p-3)+4>0$.
\end{proof}

\begin{lem}\label{lem2.8}
Let $S$ be a weak-regular sequence over $G=C_n$, where $n\ge2$ and $|S|\ge n$.
Then either $\sum(S)=G$, or there is a zero-sum subsequence $T\mid S$
such that $\mathsf{v}_g(T)=\mathsf{v}_g(S)\ge1$ for some $g\mid S$.
\end{lem}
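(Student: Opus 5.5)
The plan is a stabilizer (Kneser) argument on $\Sigma_0(S)$, combined with induction on $n$. Two preliminary reductions come first. If $\mathsf v_g(S)=\ord(g)$ for some $g\mid S$, then $T=g^{\ord(g)}$ is a zero-sum subsequence containing every copy of $g$, and we are done. So we may assume $\mathsf v_g(S)\le\ord(g)-1$ for every $g$. Next, since $|S|\ge n=\mathsf D(C_n)$, the sequence $S$ has a nonempty zero-sum subsequence, so $0\in\sum(S)$. Hence $\sum(S)=G$ is equivalent to $\Sigma_0(S)=G$.

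Write $\supp(S)=\{g_1,\dots,g_k\}$, $v_i=\mathsf v_{g_i}(S)$ and $A_i=\{0,g_1\cdot 0+ g_i,\dots\}$, more precisely $A_i=\{0,g_i,2g_i,\dots,v_ig_i\}$. Then $|A_i|=v_i+1$ and
\[
\Sigma_0(S)=A_1+\cdots+A_k .
\]
Suppose $B:=\Sigma_0(S)\ne G$ and put $H=\st(B)$, a proper subgroup of $G$ of order $h$. Lemma~\ref{lem2.1} gives
\[
|B|\ge\sum_i|A_i+H|-(k-1)h .
\]
Consider first the case where, for every $g_i\notin H$, the elements $0,g_i,\dots,v_ig_i$ lie in distinct cosets of $H$. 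Then $|A_i+H|=h(v_i+1)$ for $g_i\notin H$ and $|A_i+H|=h$ for $g_i\in H$. The bound becomes
\[
|B|/h\ge 1+|S|-|S_H|.
\]
Since $B$ is a union of $H$-cosets and $B\ne G$, we have $|B|\le n-h$. Therefore
\[
|S_H|\ge |S|-n/h+2\ge n-n/h+2\ge h ,
\]
where the last inequality uses $n\ge 2h$. Now $S_H$ is a weak-regular sequence over the smaller cyclic group $H$ with $|S_H|\ge|H|$, so the induction hypothesis applies to it. If it yields a zero-sum $T\mid S_H$ with $\mathsf v_g(T)=\mathsf v_g(S_H)=\mathsf v_g(S)$, we are done. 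Otherwise $\sum(S_H)=H$.

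Consider next the case where some $g_i\notin H$ has $jg_i\in H$ for some $1\le j\le v_i$. Then $A_i+H$ has fewer than $v_i+1$ cosets, and we must argue differently. Here I would use that $H$ stabilizes $B$ to move multiples of $g_i$ into $H$.

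The main obstacle is turning "$\sum(S_H)=H$", or the degenerate coset collisions, into the required conclusion. The intended route is as follows. For each $g\mid S$, a suitable $T$ exists if and only if
\[
-\mathsf v_g(S)\,g\in\Sigma_0\bigl(S g^{-\mathsf v_g(S)}\bigr).
\]
I would first try to show that $H$-periodicity of $B$, together with $\Sigma(S_H)=H$, forces this membership for some $g$. The idea is to take $g\in H$ (or the colliding $g_i$) and use that $S g^{-\mathsf v_g}$ still contains enough of $S_H$ that its sumset covers $H$. When $|S_H|\ge h+1$ I would remove $g$, reapply the induction to the remainder, and absorb $-\mathsf v_g g\in H$.

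If this bookkeeping fails, my fallback is a direct incremental argument in $C_n$. Add the blocks $A_i$ one at a time and show that each block either increases $|\Sigma_0|$ by at least $v_i$, or else produces a zero-sum that swallows all copies of $g_i$. Summing these increments then gives $|\Sigma_0(S)|\ge 1+|S|\ge n$ unless such a $T$ appears.
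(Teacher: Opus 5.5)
Your Kneser-on-the-stabilizer strategy matches the paper's, but the proof has two unresolved steps.

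First, you never handle the coset-collision case. ``Use that $H$ stabilizes $B$ to move multiples of $g_i$ into $H$'' is not an argument. This case only arises because you grouped equal terms into $A_i=\{0,g_i,\dots,v_ig_i\}$. Instead, write $\sum_0(S)$ as the sum of the $|S|$ two-element sets $\{0,x\}$, one for each term $x$ of $S$. Then $|\{0,x\}+H|$ equals $2h$ or $h$ according as $x\notin H$ or $x\in H$. Lemma~\ref{lem2.1} then gives $|\sum_0(S)|\ge(|S|-|S_H|+1)h$ with no case distinction. Also $h=1$ is impossible, since then $|\sum_0(S)|\ge|S|+1>n$. This is exactly what the paper does.

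Second, and more seriously, you weaken $|S_H|\ge n-n/h+2$ to $|S_H|\ge h$. After that, the induction can only return the alternative $\sum(S_H)=H$, which you do not know how to use. Your proposed repair (``when $|S_H|\ge h+1$, remove $g$ and reapply the induction'') fails as stated. Deleting all copies of an arbitrary $g$ from a sequence of length $h+1$ can leave fewer than $h$ terms. Your fallback incremental claim is unproved.

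The missing observation is this. With $d=n/h\ge2$ one has $n-d+2-2h=(d-2)(h-1)\ge0$, so $|S_H|\ge2h$. Now choose $g\mid S_H$ of minimum multiplicity $a$. After your first reduction, every multiplicity in $S_H$ is at most $h-1<|S_H|$. So $S_H$ has at least two distinct elements, and $|S_Hg^{-a}|\ge|S_H|/2\ge h$. Apply the induction hypothesis for $H\cong C_h$ to $S_Hg^{-a}$. There are two outcomes:
\begin{itemize}
\item It produces a zero-sum subsequence containing all copies of some $g'\in H$ with $g'\ne g$. These are all copies of $g'$ in $S$, so you are done.
\item It gives $\sum(S_Hg^{-a})=H\ni -ag$. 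Then $T=T'g^a$, with $T'\mid S_Hg^{-a}$ and $\sigma(T')=-ag$, works.
\end{itemize}
With these two repairs your argument becomes a complete, self-contained induction on $n$.

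The paper needs no induction here. It notes $|S_H|\ge 2h\ge\mathsf N(H)$, using the cyclic upper bound for $\mathsf N(C_m)$ quoted in Section~2 (and $\mathsf N(C_2)=2$). It then applies the definition of $\mathsf N(H)$ directly to $S_H$. Your repaired route replaces that external bound by the minimum-multiplicity step.
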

\begin{proof}
If $\mathsf{v}_g(S)=\ord(g)$ for some $g$, take $T=g^{\ord(g)}$.
Otherwise $0\notin\supp(S)$, and we may assume $\sum(S)\ne G$.
Since $|S|\ge \mathsf D(G)$, $\sum(S)=\sum_0(S)$.
Let $H=\st(\sum_0(S))$ and $m=|H|$.
If $m=1$, Kneser's theorem gives $|\sum_0(S)|\ge|S|+1>n$.
Thus $m>1$, and $\sum_0(S)\ne G$ gives $m<n$.
Since $n=dm$ for some integer $d\ge2$, Kneser's theorem also gives
\[
(|S|-|S_H|+1)m\le\bigl|\sum_0(S)\bigr|\le n-m,
\qquad
|S_H|\ge n-d+2\ge2m\ge\mathsf{N}(H).
\]
The middle inequality is equivalent to $(d-2)(m-1)\ge0$.
Apply the definition of $\mathsf{N}(H)$ to the weak-regular sequence $S_H$;
all occurrences in $S$ of the resulting element $g\in H$ belong to $S_H$.
\end{proof}

\begin{lem}\label{lem2.9}
Let $S,S'$ be sequences over a finite abelian group $G$, and let
$H\le G$ satisfy $H\subseteq\sum_0(S)$.
Suppose that $T_1\mid S'$ satisfies $\sigma(T_1)\in H$ and
$\mathsf{v}_g(T_1)=\mathsf{v}_g(S')\ge1$ for some
$g\in\supp(S')\setminus\supp(S)$.
Then $SS'$ has a zero-sum subsequence $T$ with
$\mathsf{v}_g(T)=\mathsf{v}_g(SS')$.
\end{lem}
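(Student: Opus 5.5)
The plan is to lift $T_1$ to a genuine zero-sum subsequence by using $S$ to cancel the leftover sum $\sigma(T_1)\in H$. Since $H$ is a subgroup, $-\sigma(T_1)\in H$. The hypothesis $H\subseteq\sum_0(S)$ then gives a subsequence $T_2\mid S$ with
\[
\sigma(T_2)=-\sigma(T_1).
\]
Here $T_2$ is allowed to be empty, which is exactly the case $\sigma(T_1)=0$; this is why we work with $\sum_0$ rather than $\sum$. Set $T=T_1T_2$. Because $T_1\mid S'$ and $T_2\mid S$, we have $T\mid SS'$, and
\[
\sigma(T)=\sigma(T_1)+\sigma(T_2)=0.
\]

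It remains to check the multiplicity condition at the chosen $g$. Since $g\notin\supp(S)$, we have $\mathsf v_g(S)=0$, hence $\mathsf v_g(T_2)=0$. Therefore
\[
\mathsf v_g(T)=\mathsf v_g(T_1)=\mathsf v_g(S')=\mathsf v_g(S)+\mathsf v_g(S')=\mathsf v_g(SS').
\]
Moreover $T$ is nonempty, because $\mathsf v_g(T)=\mathsf v_g(S')\ge1$.

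The argument is short and has no real obstacle. The only points needing care are the case where $T_2$ is empty, which $\sum_0$ handles, and the requirement $g\notin\supp(S)$, which ensures the added terms from $S$ cannot disturb the count of $g$.
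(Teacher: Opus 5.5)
Your proof is correct and follows the paper's argument exactly: choose $T_2\mid S$ with $\sigma(T_2)=-\sigma(T_1)$ using $H\subseteq\sum_0(S)$, set $T=T_1T_2$, and use $g\notin\supp(S)$ to keep the multiplicity of $g$ unchanged. Your remarks on the possibly empty $T_2$ and on the nonemptiness of $T$ (from $\mathsf v_g(S')\ge1$) spell out details the paper leaves implicit.
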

\begin{proof}
Choose $T_2\mid S$ with $\sigma(T_2)=-\sigma(T_1)$ and take $T=T_1T_2$.
Since $g\notin\supp(S)$, its multiplicity in $T$ equals that in $SS'$.
\end{proof}

\section{Proof of Theorems \ref{thm5.1} and \ref{thm1.2}}

\begin{lem}\label{lem:c0-regular-sums}
Let $S$ be a regular sequence over a finite abelian group $G$. Then
\[
|\sum_0(S)|\ge \min\{|G|,|S|+1\}.
\]
In particular, $\sum_0(S)=G$ if $|S|\ge |G|-1$.
\end{lem}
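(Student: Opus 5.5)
We argue by induction on $|S|$, adding one term at a time, with Kneser's theorem (Lemma~\ref{lem2.1}) supplying the growth at each step. The empty sequence has $\sum_0(1)=\{0\}$, which has size $1$, so the base case holds. Write $S=S'g$. The sequence $S'$ is regular, since every subsequence of a regular sequence is regular. By induction, $|\sum_0(S')|\ge\min\{|G|,|S'|+1\}=\min\{|G|,|S|\}$. Moreover $\sum_0(S)=\sum_0(S')+\{0,g\}$.

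If $|\sum_0(S')|=|G|$, there is nothing to prove. If $|\sum_0(S')|\ge|S|+1$, we are also done, because $\sum_0(S')\subseteq\sum_0(S)$. So we may assume $|\sum_0(S')|=|S|\le|G|-1$, and we must show that adding $g$ strictly enlarges the set. Suppose instead that $\sum_0(S')+g=\sum_0(S')$. Then $g\in\st(\sum_0(S'))=:K$.

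The key step is to exploit the stabilizer using regularity. Rather than adding only the last term, run the argument over all orderings. Suppose that for every term $h$ of $S$, the set $A=\sum_0(Sh^{-1})$ has size exactly $|S|$ and is fixed by $h$. We look for a contradiction. Cleaner still is to apply Lemma~\ref{lem2.1} directly to the sum $\sum_0(S)=\{0,g_1\}+\cdots+\{0,g_l\}$ with $H=\st(\sum_0(S))$.

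If $H=G$, then $\sum_0(S)=G$ and we are done. Otherwise $H$ is a proper subgroup. Terms $g_i\in H$ contribute $|\{0,g_i\}+H|=|H|$, while terms $g_i\notin H$ contribute $2|H|$. Kneser's theorem therefore gives
\[
|\textstyle\sum_0(S)|\ge |H|\bigl(|S|-|S_H|+1\bigr).
\]
Regularity gives $|S_H|\le|H|-1$, so
\[
|H|\bigl(|S|-|S_H|+1\bigr)\ge |H|\bigl(|S|-|H|+2\bigr).
\]
This is at least $|S|+1$ exactly when $(|H|-1)(|S|-|H|+1)\ge0$, which holds whenever $|S|\ge|H|-1$. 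That condition is automatic if $|S|\ge |S_H|$… but it fails for short $S$ with large $H$. In that case use instead the bound $|\sum_0(S)|\ge|H|$, together with the fact that $\sum_0(S)$ is a union of $H$-cosets containing $0$. This gives $|\sum_0(S)|\ge|H|\ge|S|+1$ when $|S|\le|H|-1$.

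So both regimes give $|\sum_0(S)|\ge|S|+1$, since $|H|$ at most $|S|+1$ or at least $|S|+1$ covers all cases. This makes the induction unnecessary: a single application of Lemma~\ref{lem2.1} suffices. The expected delicate point is exactly this case split on $|H|$ versus $|S|$, and in particular checking that the factor $(|H|-1)(|S|-|H|+1)$ is nonnegative in the first regime.

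The final assertion follows immediately. If $|S|\ge|G|-1$, then $\min\{|G|,|S|+1\}=|G|$, so $\sum_0(S)=G$.
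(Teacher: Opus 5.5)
Your argument is correct and is essentially the paper's proof. Both apply Kneser's theorem to $\{0,g_1\}+\cdots+\{0,g_l\}$ with $H=\st(\sum_0(S))$, use regularity to get $|S_H|\le|H|-1$, and conclude from $(|H|-1)(|S|-|H|+1)\ge0$. You handle $|H|\ge|S|+1$ directly via $|\sum_0(S)|\ge|H|$, whereas the paper argues by contradiction, which forces $|H|\le|\sum_0(S)|\le|S|$. The abandoned induction set-up, the ``every ordering'' detour, and the garbled remark about $|S|\ge|S_H|$ should simply be deleted.
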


\begin{proof}
Suppose that $|\sum_0(S)|<\min\{|G|,|S|+1\}$.
Let $H=\st(\sum_0(S))$ and $m=|H|$.
Then $H<G$ and $m\le |S|$, so Kneser's theorem gives
\[
|\sum_0(S)|\ge (|S|-|S_H|+1)m
\ge (|S|-m+2)m\ge |S|+1,
\]
a contradiction.
\end{proof}

\begin{lem}\label{lem:c0-short-block}
Let $K<H<G$, where $|K|\ge2$ and $H/K\cong C_q$ for a prime $q$.
Let $S$ be a sequence over $H\setminus K$ that is regular over $G$.
If $|S|\ge2q-1$, then there exists $T\mid S$ such that
$|T|\le q$ and $\sigma(T)\in K\setminus\{0\}$.
\end{lem}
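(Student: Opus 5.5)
The plan is to work in the quotient $\overline{H}=H/K\cong C_q$ and write $\bar x$ for the image of $x\in H$. Every term of $S$ has nonzero image, because $S$ lies in $H\setminus K$. A subsequence $T\mid S$ satisfies $\sigma(T)\in K$ exactly when $\sigma(\bar T)=0$ in $C_q$. So we need a short zero-sum subsequence of $\bar S$ whose lift has nonzero sum. I argue by contradiction: assume every $T\mid S$ with $|T|\le q$ and $\sigma(T)\in K$ has $\sigma(T)=0$. The aim is to force $\supp(S)$ into a subgroup of order $q$, which contradicts regularity.

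\emph{Step 1 (multiplicities).} Let $g\mid S$ with $\mathsf v_g(S)\ge q$. Then $T=g^q$ has $\sigma(T)=qg\in K$ and $|T|=q$. By the assumption $qg=0$, so $\langle g\rangle$ is a proper subgroup of order $q$. Regularity then gives $\mathsf v_g(S)\le q-1$, a contradiction. Hence every multiplicity in $S$ is at most $q-1$.

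\emph{Step 2 (every term lies in a short $K$-sum block).} Fix $g\mid S$ and choose any $q-1$ further terms $U\mid Sg^{-1}$; this is possible since $|S|\ge2q-1$. The images of $U$ are $q-1$ nonzero elements of $C_q$. Applying Lemma~\ref{lem2.2} to the sets $\{0,\bar u\}$ gives $|\sum_0(\bar U)|\ge\min\{q,2(q-1)-(q-2)\}=q$. Hence some $V\mid U$ has $\sigma(\bar V)=-\bar g$, and $T=gV$ satisfies $|T|\le q$ and $\sigma(T)\in K$. By the assumption, $\sigma(T)=0$.

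\emph{Step 3 (exchange).} Let $T$ be as in Step~2, and suppose some $h\mid ST^{-1}$ satisfies $\bar h=\bar g$ but $h\ne g$. Then $Tg^{-1}h$ has length at most $q$ and sum $h-g\in K\setminus\{0\}$, which contradicts the assumption. Since $|S|\ge 2q-1$ and multiplicities are at most $q-1$, there is enough room to choose $U$ in Step~2 so that it avoids a prescribed term $h$. This should show that each coset $x+K$ meets $\supp(S)$ in at most one element. So there is a well-defined lift $\psi:\{\bar x: x\mid S\}\to\supp(S)$.

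\emph{Step 4 (linearity of the lift; main obstacle).} Let $M=\langle\supp(S)\rangle$. I want to show $M\cap K=\{0\}$. Then $M$ embeds in $H/K$, so $|M|=q$ and $M$ is proper in $G$ (as $|K|\ge2$). But $S_M=S$ has length $\ge 2q-1>q-1$, contradicting regularity. The difficulty is that the assumption controls only sums of \emph{short subsequences}, whereas $M\cap K$ involves arbitrary integer combinations. I would first show $\psi(a)+\psi(b)-\psi(a+b)=0$ whenever $a,b,a+b$ are images of terms. To do so, complete $\psi(a)\psi(b)$ and $\psi(a+b)$ to blocks of the type in Step~2 that share a common complement, using the spare length $2q-1$ and Lemma~\ref{lem2.2} as before. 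Comparing the two zero sums then gives the relation. Next, the relations $q\psi(a)=0$ follow from Step~1 applied to blocks of equal images, or from additivity along $a,2a,\dots$. These relations should make $\psi$ extend to a homomorphism $C_q\to G$, whose image is a subgroup of order $q$ containing $\supp(S)$. If the additivity bookkeeping becomes unwieldy, the fallback is to bound $|\supp(\bar S)|\ge3$ from Step~1 and run the exchange argument only on three generating classes.
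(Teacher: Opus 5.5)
Your Steps 1--3 are correct. Under your hypothesis, Step 3 does show that $\supp(S)$ meets each coset of $K$ at most once. The endgame is also correct: $M\cap K=\{0\}$ would put all of $S$ in a proper subgroup of order $q$. The gap is Step 4. It carries the entire content of the lemma and is not proved, and the sketch fails in two places.

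\emph{Block length.} Blocks of the Step~2 type have length at most $q$. To compare $\psi(a)\psi(b)$ with $\psi(a+b)$ you therefore need a common complement $W$ with $|W|\le q-2$ and $\overline{\sigma(W)}=-(a+b)$. Cauchy--Davenport on $q-2$ nonzero residues only guarantees $q-1$ residues, so this can fail. This part is repairable. Factor the image of any $T\mid S$ with $\sigma(T)\in K$ into minimal zero-sum sequences over $H/K$, each of length at most $q$. Under your hypothesis each factor has sum $0$, so $\sigma(T)=0$ regardless of $|T|$.

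\emph{Additivity does not give a homomorphism.} This is the more serious failure. Additivity on triples $a,b,a+b\in A=\supp(\bar S)$ does not make $\psi$ extend to a homomorphism, because $A$ may contain no such triple. For example, with $q=13$, $A=\{1,3,9\}$ is not excluded by Steps 1--3, and then you get no relation at all. Also, $q\psi(a)=0$ is not available. Step~1 rules out $q$ equal terms rather than supplying them, and additivity along $a,2a,\dots$ needs every multiple of $a$ in $A$ with multiplicities at least $2$. To reach $M\cap K=\{0\}$ you would have to show that the relations realized by subsequences of $S$, whose coefficients are capped by the multiplicities, generate all integer relations among the residues. Nothing in your outline does this.

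\emph{The missing idea is a counting argument.} It makes the structural analysis unnecessary.
\begin{itemize}
\item Take $RR'\mid S$ with $|R|=q$ and $|R'|=q-1$.
\item Suppose $R_1,R_2\mid R$ have sums in the same coset of $K$. Cauchy--Davenport on $R'$ gives $R_3\mid R'$ with $\sigma(R_1R_3),\sigma(R_2R_3)\in K$.
\item Under your hypothesis, in the length-free form above, both sums are $0$, so $\sigma(R_1)=\sigma(R_2)$. Hence $|\sum_0(R)|\le q$.
\item But $R$ is regular over $H$ and $|H|=q|K|\ge 2q$, so Lemma~\ref{lem:c0-regular-sums} gives $|\sum_0(R)|\ge q+1$, a contradiction.
\end{itemize}
Run forwards, this is the paper's proof. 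Pigeonhole yields two distinct subsums of $R$ in one $K$-coset. A common complement from $R'$ moves both into $K$, and one of them is nonzero. Factoring into minimal zero-sum pieces over $H/K$ then extracts a piece of length at most $q$ with nonzero sum.

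If you prefer to avoid Kneser, build $\sum_0(r_1\cdots r_t)$ term by term. It cannot grow at all $q$ steps while staying of size at most $q$. A stall at step $t$ forces $\sum_0(r_1\cdots r_{t-1})=\langle r_t\rangle$, so $\langle r_t\rangle$ has order $q$ and contains every term of $R$. This contradicts regularity.
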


\begin{proof}
Take $RR'\mid S$ with $|R|=q$ and $|R'|=q-1$.
Since $R$ is regular over $H$ and $|H|=q|K|\ge2q$,
Lemma~\ref{lem:c0-regular-sums} gives $|\sum_0(R)|\ge q+1$.
Thus there exist $R_1,R_2\mid R$ whose sums are distinct and belong to the same coset of $K$.
By Cauchy--Davenport, $\sum_0(R')+K=H$.
Choose $R_3\mid R'$ such that $\sigma(R_1R_3),\sigma(R_2R_3)\in K$.
Choose $U\in\{R_1R_3,R_2R_3\}$ with $\sigma(U)\ne0$.
Factor $U$ into subsequences whose images in $H/K$ are minimal zero-sum sequences.
Each factor has length at most $q$, and at least one has nonzero sum, as required.
\end{proof}

\begin{lem}\label{lem:c0-decomposition}
Let $K\le G$ with $|K|\ge2$ and $G/K\cong C_q^2$ for a prime $q$.
If $S$ is regular over $G$ and
\[
|S|\ge q|K|+2q^2-q-1,
\]
then $S=S_KT_1\cdots T_kR$, where $k=|K|-|S_K|$,
$|T_i|\le q$, $\sigma(T_i)\in K\setminus\{0\}$ and $\sum_0(R)+K=G$.
\end{lem}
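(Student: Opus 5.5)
We extract the blocks $T_1,\dots,T_k$ greedily and then check that what is left is long enough to act as $R$. Set $S_0=SS_K^{-1}$, a sequence over $G\setminus K$. We remove blocks one at a time. Suppose $T_1,\dots,T_{i}$ have been removed, with $i<k$, and the remainder is $S_i$. We want a subsequence $T_{i+1}\mid S_i$ with $|T_{i+1}|\le q$ and $\sigma(T_{i+1})\in K\setminus\{0\}$.

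The tool is Lemma~\ref{lem:c0-short-block}, applied to the chain $K<H<G$, where $H$ is a subgroup with $H/K\cong C_q$. Such a subgroup $H$ corresponds to one of the $q+1$ lines of $G/K\cong C_q^2$. Every term of $S_i$ lies outside $K$, so it lies in exactly one $H\setminus K$. If some line $H$ satisfies $|(S_i)_{H\setminus K}|\ge 2q-1$, the lemma gives the next block $T_{i+1}$.

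It remains to see why such a line always exists. If none does, then $|S_i|\le (q+1)(2q-2)=2q^2-2$. On the other hand, regularity gives $|S_K|\le|K|-1$, so $k\ge1$. The terms removed so far number at most $iq\le (k-1)q$. Hence
\[
|S_i|\ge |S|-|S_K|-(k-1)q= |S|-|K|+k-(k-1)q .
\]
Since $k\le|K|$, this is at least
\[
|S|-q|K|+q-(|K|-k)(1)\ \ge\ |S|-q|K|+q-|S_K|\cdot 0 .
\]
A cleaner way to write the bound is $|S_i|\ge |S|-|S_K|-(k-1)q\ge |S|-q|K|+q\ (\text{using } |S_K|+kq\le q|K|)$. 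With the hypothesis on $|S|$ this gives $|S_i|\ge 2q^2-1>2q^2-2$. That contradicts the previous estimate, so the greedy extraction succeeds for all $k$ steps.

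After the $k$ steps, let $R=S_k$. The same count gives $|R|\ge |S|-|S_K|-kq\ge |S|-q|K|\ge 2q^2-q-1$. It remains to show $\sum_0(R)+K=G$, and this is the main obstacle. Pass to $\bar R=\varphi(R)$ over $G/K\cong C_q^2$, where $\varphi$ is the quotient map. Each term is nonzero, and $\sum_0(R)+K=G$ is equivalent to $\sum_0(\bar R)=G/K$.

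The plan is to show that $\bar R$ has at most $q-1$ terms on some line while spreading enough across the others, and then use Cauchy--Davenport along two independent lines. For a line $L$, let $a_L=|\bar R_L|$. If two distinct lines $L_1,L_2$ have $a_{L_1},a_{L_2}\ge q-1$, then Cauchy--Davenport gives $\sum_0(\bar R_{L_j})=L_j$ for $j=1,2$. Since $L_1+L_2=G/K$, we get $\sum_0(\bar R)=G/K$.

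Otherwise at most one line $L$ has $a_L\ge q-1$, and the other $q$ lines together carry at most $q(q-2)$ terms. So $a_L\ge 2q^2-q-1-q^2+2q=q^2+q-1$.

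In this case we use regularity in $G$ together with the fact that $R$ contains no leftover blocks. The line $L$ pulls back to a subgroup $H$ with $|R_{H\setminus K}|\ge q^2+q-1\ge 2q-1$. We would like to apply the extraction step once more to reach a contradiction. That fails once $k$ is exhausted, so instead we argue directly.

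Using Lemma~\ref{lem:c0-regular-sums} inside $H$, the sums of $R_H$ together with $K$ fill $H$. Adding the few terms off $L$ then reaches the other cosets by Cauchy--Davenport in the direction transverse to $L$. Concretely, one needs roughly $q-1$ terms off $L$ in total, possibly on different lines. These give $q$ distinct classes modulo $L$, and this is exactly where the slack $-q-1$ in the length bound is designed to be used.

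If even that fails, every term of $\bar R$ lies on $L$, so $S$ has many terms in $H$. Regularity then bounds $|S_H|\le|H|-1=q|K|-1$, which is incompatible with $|S|\ge q|K|+2q^2-q-1$ after accounting for the terms outside $H$ removed in blocks. This final counting step is the one I would check carefully, possibly by choosing the greedy extraction to prefer the most populous line so that the leftover $R$ is balanced.
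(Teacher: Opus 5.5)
Your extraction count is correct, and so are the two Cauchy--Davenport cases. The middle display is garbled: $-(|K|-k)=-|S_K|$ need not be $\ge 0$. Your ``cleaner'' bound $|S_i|\ge|S|-q|K|+q\ge 2q^2-1$ is right, however, and so is $|R|\ge 2q^2-q-1$.

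The gap is the final step, which you flag but do not prove. First, the failing case is not that every term of $\varphi(R)$ lies on $L$. It is that at most $q-2$ terms of $\varphi(R)$ lie off $L$. More importantly, to contradict regularity you need $|S_H|\ge|S|-(q-2)$ for $H=\varphi^{-1}(L)$, so every block $T_i$ must lie in $H$. Nothing in your argument bounds the number of terms outside $H$ removed in blocks; a priori it could be as large as $kq$. Since $k$ can be as large as $|K|$, the estimate $|S_H|\ge|S|-kq-(q-2)$ gives no contradiction with $|S_H|\le q|K|-1$. So as written, $\sum_0(R)+K=G$ is not established.

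The missing idea is to control which lines the blocks came from. The paper always chooses the line with the most remaining images and argues backwards. If $R$ has at most $q-2$ terms outside $H$, then $|R_H|\ge 2q^2-2q+1$. Assume $T_{j+1},\dots,T_k\subseteq H$. Before step $j$, at most $(q-2)+q=2q-2$ remaining terms lay outside $H$, while at least $2q^2-2q+1$ lay in $H$. Distinct lines meet only at $0$, so the most populous line was $\varphi(H)$, which forces $T_j\subseteq H$.

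Your arbitrary-choice rule can in fact be rescued more directly. A line from which a block is extracted has at least $2q-1$ terms beforehand and loses at most $q$, so it keeps at least $q-1$ terms. Later steps never touch it. Hence if any line other than $L$ were ever used, $R$ would have at least $q-1$ terms off $L$, and $\sum_0(\varphi(R))=G/K$ would follow.

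Either way all $T_i\subseteq H$, so $|S_H|\ge|S|-q+2>q|K|=|H|$, contradicting regularity.
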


\begin{proof}
Let $\varphi:G\to G/K$ be the natural homomorphism and let $k=|K|-|S_K|$.
Starting with $SS_K^{-1}$, choose at each step a subgroup of order $q$ containing the largest number of remaining images.
Before each of the first $k$ steps, the remaining length is at least
$|S|-|S_K|-q(k-1)\ge2q^2-1$, so the chosen subgroup contains at least $2q-1$ terms.
Apply Lemma~\ref{lem:c0-short-block} in its inverse image to obtain $T_1,\ldots,T_k$.
The remaining sequence $R=S(S_KT_1\cdots T_k)^{-1}$ has length
$|R|\ge2q^2-q-1$.
Suppose that $\sum_0(\varphi(R))\ne G/K$.
Since $|R|>(q+1)(q-2)$, some subgroup $\bar H$ of order $q$ contains at least $q-1$ terms of $\varphi(R)$.
For $H=\varphi^{-1}(\bar H)$, Cauchy--Davenport gives $\sum_0(\varphi(R_H))=\bar H$.
It also gives $|RR_H^{-1}|\le q-2$, since otherwise $\sum_0(\varphi(R))=G/K$.
Thus $|R_H|\ge2q^2-2q+1$.
Working backwards, suppose that $T_{j+1},\ldots,T_k$ lie in $H$.
Before $T_j$ was chosen, at most $q-2+q=2q-2$ terms lay outside $H$,
whereas at least $2q^2-2q+1$ lay in $H$.
Distinct subgroups of order $q$ in $G/K$ meet only at zero,
and the remaining terms have nonzero images. Thus the subgroup containing
the largest number of remaining images is $\bar H$, forcing $T_j$ to lie in $H$.
Hence all $T_i$ lie in $H$, and $|S_H|\ge|S|-q+2>|H|=q|K|$, a contradiction.
\end{proof}

\begin{lem}\label{lem:c0-KneserW0}
Let $p$ be the smallest prime dividing $|G|$, let $K<G$ with $|K|\ge2$, and let $l\ge2$ be an integer.
Suppose that $S$ is regular over $G$ and admits a decomposition
$S=S_KT_1\cdots T_kR$, where $k=|K|-|S_K|$,
$|T_i|\le l$, $\sigma(T_i)=t_i\in K\setminus\{0\}$ and $\sum_0(R)+K=G$.
If
\[
|S|\ge\frac{|G|}{p}+l\frac{|K|}{p}-l+p-2,
\]
then $\sum(S)=G$.
\end{lem}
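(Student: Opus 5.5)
We argue by contradiction. Assume $\sum(S)\ne G$ and aim to show that $\sum_0(S)$ contains both $K$ and a full set of coset representatives of $K$. The basic observation is that $S_K$ is regular over $K$: every subgroup of $K$ other than $K$ is a proper subgroup of $G$, and $|S_K|\le |K|-1$ because $S$ is regular over $G$. Consider the subset of $K$
\[
A=\sum\nolimits_0(S_K)+\{0,t_1\}+\cdots+\{0,t_k\}\subseteq K .
\]
Since $\sum_0(S)\supseteq A+\sum_0(R)$ and $\sum_0(R)+K=G$, the equality $A=K$ gives $\sum_0(S)=G$. The hypothesis on $|S|$ forces $|S|\ge\mathsf D(G)$ in the cases where the lemma is applied, so $0\in\sum(S)$ and therefore $\sum(S)=G$, a contradiction. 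It thus suffices to show $A=K$.

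To do this, let $L=\st(A)\le K$ and apply Kneser's theorem (Lemma~\ref{lem2.1}) inside $K$. If $L=\{0\}$, Lemma~\ref{lem:c0-regular-sums} applied to $S_K$ over $K$ gives
\[
|A|\ge\min\{|K|,\,|S_K|+1\}+k=|K|+1
\]
in the nontrivial branch, which is impossible; hence $A=K$. If $\{0\}<L<K$, Kneser gives
\[
|A|\ge\bigl|\sum\nolimits_0(S_K)+L\bigr|+\#\{i:t_i\notin L\}\,|L|.
\]
Using $|\sum_0(S_K)+L|\ge(|S_K|-|S_{L}|+1)|L|$ together with $|S_L|\le|L|-1$, the only way to have $A\ne K$ is that many of the $t_i$ lie in $L$, and more precisely that the set $I=\{i:t_i\in L\}$ satisfies roughly $|I|\ge k-(|K|-|S_K|)/|L|\cdot(\ldots)$. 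So the case to control is the one where many blocks $T_i$ have sums in a proper nontrivial subgroup $L$.

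For this degenerate case we bring in the length hypothesis through Lemma~\ref{lem5.2}. Since $|S|\ge |G|/p+p-2$ (and $|S|\ge\mathsf D(G)$), we get $\st(\sum(S))=\{0\}$ and $\sum(S)=\sum_0(S)$. We then apply Kneser's theorem to $\sum_0(S)$ written as a sumset of two-element sets $\{0,g\}$, one for each term $g$ of $S$, except that every block $T_i$ with $t_i\in L$ is replaced by the single set $\{0,t_i\}$. With trivial stabilizer this yields
\[
|\sum\nolimits_0(S)|\ge |S|+1-(l-1)|I|.
\]
Comparing this with $|\sum_0(S)|\le|G|-1$, and using the bound on $|I|$ from the previous paragraph together with $|L|\ge p$ and $|K|/|L|\ge p$ when $L<K$, produces the inequality that contradicts
\[
|S|\ge |G|/p+l|K|/p-l+p-2.
\]
The term $l|K|/p-l$ is exactly the loss $(l-1)|I|$, with $|I|$ bounded by about $|K|/p-1$.

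The main obstacle is this last counting step: I must extract a clean upper bound $|I|\lesssim |K|/p-1$ (or a matching trade-off between $|I|$ and $|A|$) from the hypothesis $A\ne K$, with the correct constants. If this direct bookkeeping fails, my first alternative is to apply Kneser to $\sum_0(S)$ modulo the subgroup $L$, that is, to pass to $G/L$, where the blocks with $t_i\in L$ become zero-sum.
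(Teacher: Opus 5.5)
Your first step is fine: note that $A=\sum_0(W)$ for $W=S_Kt_1\cdots t_k$, so $A=K$ is exactly the case where $W$ is regular over $K$. The degenerate case, however, does not close, and the obstacle you flag is not a bookkeeping issue.

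\textbf{The final inequality is too weak.} It is linear in $|S|$. Even granting $|\sum_0(S)|\ge|S|+1-(l-1)|I|$, comparing with $|\sum_0(S)|\le|G|-1$ only gives $|S|\le|G|-2+(l-1)|I|$. That is compatible with $|S|\approx|G|/p$, so no bound on $|I|$ can contradict a hypothesis of size $|G|/p$.

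\textbf{The stabilizer step is unjustified.} Triviality of $\st(\sum_0(S))$ from Lemma~\ref{lem5.2} says nothing about the stabilizer of the subset of $\sum_0(S)$ to which you apply Kneser. That lemma also needs $|S|\ge\mathsf D(G)$, which the statement does not provide.

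\textbf{The bound on $|I|$ points the wrong way.} Kneser applied to $A$ with $A\ne K$ gives $(|K|-|S_L|-|I|+1)|L|\le|A|\le|K|-|L|$, that is, $|S_L|+|I|\ge|K|-|K|/|L|+2$. So $|I|$ is forced to be large, and collapsing every block with $t_i\in L$ costs far more than $(l-1)(|K|/p-1)$.

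\textbf{The missing idea.} Use a nontrivial stabilizer, and collapse only as many blocks as needed. If $W$ is not regular over $K$, take a minimal $H_1<K$ with $|W_{H_1}|\ge|H_1|$; then $W_{H_1}$ is regular over $H_1$. Pick $r=|H_1|-|S_{H_1}|-1\le|K|/p-1$ blocks with $t_i\in H_1$ so that $S_{H_1}t_1\cdots t_r$ is regular of length $|H_1|-1$. By Lemma~\ref{lem:c0-regular-sums}, its $\sum_0$ is $H_1$.

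Set $W_0=St_1\cdots t_r(T_1\cdots T_r)^{-1}$. Then $\sum_0(W_0)\subseteq\sum_0(S)$ is a union of $H_1$-cosets, so $H_2=\st(\sum_0(W_0))\supseteq H_1$ is nontrivial. If $H_2<G$, Kneser gives
\[
|\sum\nolimits_0(W_0)|\ge(|S|-lr+2-|H_2|)|H_2|\ge\Bigl(\frac{|G|}{p}+p-|H_2|\Bigr)|H_2|\ge|G|,
\]
using $|W_0|\ge|S|-(l-1)r$, $|(W_0)_{H_2}|\le|H_2|-1+r$ and $p\le|H_2|\le|G|/p$. The multiplicative factor $|H_2|\ge p$ is what lets a length of order $|G|/p$ suffice.

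Finally, $0\in\sum(S)$ needs no appeal to $\mathsf D(G)$. Since $|W|=|K|\ge\mathsf D(K)$, $W$ has a nonempty zero-sum subsequence, which lifts to one of $S$ by replacing each $t_i$ with $T_i$.
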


\begin{proof}
Let $W=S_Kt_1\cdots t_k$. Since $|W|=|K|\ge \mathsf D(K)$, we have $0\in\sum(S)$.
If $W$ is regular over $K$, then $\sum_0(W)=K$ and the conclusion follows.
Otherwise choose a minimal subgroup $H_1<K$ with $|W_{H_1}|\ge|H_1|=m$.
Then $H_1\ne\{0\}$ and $W_{H_1}$ is regular over $H_1$.
Let $r=m-|S_{H_1}|-1$.
After reordering, $S_{H_1}t_1\cdots t_r$ is regular over $H_1$ of length $m-1$.
Its subsequence sums together with $0$ therefore cover $H_1$ by Lemma~\ref{lem:c0-regular-sums}.
Let $W_0=St_1\cdots t_r(T_1\cdots T_r)^{-1}$ and $H_2=\st(\sum_0(W_0))$.
Since the $T_i$ are disjoint, replacing each selected occurrence of
$t_i$ by $T_i$ shows that $\sum_0(W_0)\subseteq\sum_0(S)$.
Together with $0\in\sum(S)$, this gives $\sum_0(W_0)\subseteq\sum(S)$.
The subsequence $S_{H_1}t_1\cdots t_r$ of $W_0$ has
$\sum_0(S_{H_1}t_1\cdots t_r)=H_1$, so $\sum_0(W_0)$ is a union
of cosets of $H_1$. Hence $H_1\le H_2$.
We may assume $H_2<G$; hence $p\le|H_2|\le|G|/p$.
Since $r\le|K|/p-1$, $|W_0|\ge|S|-(l-1) r$ and
$|(W_0)_{H_2}|\le|S_{H_2}|+r\le|H_2|-1+r$, Kneser's theorem gives
\[
|\sum_0(W_0)|\ge(|S|-lr+2-|H_2|)|H_2|
\ge(|G|/p+p-|H_2|)|H_2|\ge|G|.
\]
The last inequality follows from $(|H_2|-p)(|G|/p-|H_2|)\ge0$, and thus $\sum(S)=G$.
\end{proof}

\begin{thm}\label{thm:unified-c0}
Let $G=C_q\oplus C_{nq}$, where $q$ is prime and $n\ge2$,
and let $p\ge3$ be the smallest prime dividing $|G|$. Then
\[
c_0(G)\le
\max\left\{nq+2q^2-q-1,\,
\frac{nq(q+1)}p-q+p-2\right\}.
\]
\end{thm}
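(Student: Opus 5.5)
We combine Lemma~\ref{lem:c0-decomposition} with Lemma~\ref{lem:c0-KneserW0}, taking for $K$ a subgroup with $G/K\cong C_q^2$. Write $G=C_q\oplus C_{nq}$ and let $K=qG\cong C_n$. Then $|K|=n\ge2$, and $G/K\cong C_q\oplus C_q$, since $G/qG\cong C_q^2$ for a rank-two group with both invariants divisible by $q$.

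Let $S$ be a regular sequence over $G$ with
\[
|S|\ge\max\left\{nq+2q^2-q-1,\,\frac{nq(q+1)}p-q+p-2\right\}.
\]
The first bound is exactly $q|K|+2q^2-q-1$. Hence Lemma~\ref{lem:c0-decomposition} applies and gives
\[
S=S_KT_1\cdots T_kR,
\]
where $k=|K|-|S_K|$, $|T_i|\le q$, $\sigma(T_i)\in K\setminus\{0\}$ and $\sum_0(R)+K=G$. The lemma needs $S$ regular, $|K|\ge2$ and $G/K\cong C_q^2$, and all three hold.

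Next we apply Lemma~\ref{lem:c0-KneserW0} with $l=q\ge2$. Its length hypothesis reads
\[
|S|\ge\frac{|G|}{p}+q\frac{|K|}{p}-q+p-2=\frac{nq^2}{p}+\frac{nq}{p}-q+p-2=\frac{nq(q+1)}{p}-q+p-2,
\]
which is exactly the second term of the maximum. The lemma then yields $\sum(S)=G$. The only further check is that $p$ is the smallest prime dividing $|G|=nq^2$, which is the hypothesis of the theorem. Therefore every regular sequence of that length is an additive basis, and the stated bound on $c_0(G)$ follows.

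The one point needing care is the requirement $K<G$ with $|K|\ge2$, together with the exact shape of $G/K$. If $q\nmid n$, then $G\cong C_q^2\oplus C_n$ and $K=qG\cong C_n$ works directly. If $q\mid n$, then $qG=\{qx\}\cong C_n$ as well, and $G/qG\cong C_q^2$ still holds, so no case split is needed. Beyond this, the proof is just plugging in parameters, and I expect no real obstacle.
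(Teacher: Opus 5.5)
Your proof is correct and is essentially the paper's argument: take $K=qG$ (so $|K|=n$ and $G/K\cong C_q^2$), use the first term of the maximum to get the decomposition from Lemma~\ref{lem:c0-decomposition}, and use the second term, which you correctly identify as the length hypothesis of Lemma~\ref{lem:c0-KneserW0} with $l=q$, to conclude $\sum(S)=G$.
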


\begin{proof}
Let $S$ be regular over $G$ with length at least the stated bound.
For $K=qG$, we have $|K|=n$ and $G/K\cong C_q^2$.
The first term of the bound allows us to apply Lemma~\ref{lem:c0-decomposition}.
The second term is the bound in Lemma~\ref{lem:c0-KneserW0} with $l=q$.
Hence $\sum(S)=G$.
\end{proof}

\begin{proof}[Proof of Theorem~\ref{thm5.1}]
Apply Theorem~\ref{thm:unified-c0} with $nq$ in place of $n$. It gives
\[
c_0(G)\le\max\left\{nq^2+2q^2-q-1,\,
\frac{nq^2(q+1)}p-q+p-2\right\}.
\]
Since $p\le q$, the second term is at most $nq^2(q+1)/p$.
For the first, $f(q)-2q^2+q$ equals $2q^2-3q+1$ when $q\ge7$
and $q+3$ when $q=3,5$, and is positive in both cases.
This proves the stated bound.
\end{proof}

\begin{proof}[Proof of Theorem~\ref{thm1.2}]
If $n=1$, then $G=C_q^2$ and $c_0(G)=2q-1$, which is at most
$(2q-1)(n+q+1)$.
Suppose that $n\ge2$. Since $p\ge3$ is the smallest prime dividing
$|G|$, Theorem~\ref{thm:unified-c0} applies and gives
\[
c_0(G)\le\max\left\{nq+2q^2-q-1,\,
\frac{nq(q+1)}p-q+p-2\right\}.
\]
The first is at most $(2q-1)(n+q+1)$, since subtracting
$nq+2q^2-q-1$ from that expression gives $(q-1)n+2q>0$.
For the second, $p\le q$ implies
$nq(q+1)/p-q+p-2\le nq(q+1)/p$.
The latter is at most $nq^2/p+2nq/p$, because their difference is
$nq/p>0$.
Taking the maximum of these two comparisons proves the result.
\end{proof}

\section{Proof of Theorems \ref{thm1.4} and \ref{thm1.5}}

\begin{proof}[Proof of Theorem~\ref{thm1.4}]
Choose a basis $(x_1,\ldots,x_r)$ with $\ord(x_i)=n_i$ and let
$g_0=x_r$, $g_i=x_r+x_i$ for $1\le i<r$.
All $g_i$ have order $n_r$, so $S=\prod_{i=0}^{r-1}g_i^{n_r-1}$ is weak-regular.
For a zero-sum subsequence $T=\prod_{i=0}^{r-1}g_i^{a_i}$, the coordinates give
$a_i\equiv0\pmod{n_i}$ for $i\ge1$ and $\sum_i a_i\equiv0\pmod{n_r}$.
Thus also $a_0\equiv0\pmod{n_1}$, and none of the $a_i$ equals $n_r-1$.
This proves $\mathsf{N}(G)\ge r(n_r-1)+1$.
For completeness, consider the direct-sum bound from
\cite[Theorem~1.1(2)]{[GHHLYZ2022-2]}.
For each $i$, choose a weak-regular sequence over $\langle x_i\rangle$
of length $\mathsf N(C_{n_i})-1$ having no nonempty zero-sum subsequence
that contains all occurrences of any of its elements.
None contains $0$, and the direct factors intersect only at $0$, so their
product in $G$ is weak-regular.
In a direct sum, a sequence has sum zero if and only if its sum in every
coordinate is zero. A zero-sum subsequence containing all occurrences of
some element would therefore have the same property in the corresponding
factor, contrary to the choice of the sequences. The product has length
$\sum_{i=1}^r(\mathsf N(C_{n_i})-1)$, giving the second bound.
\end{proof}

If $rn_r\ge2\sum_{i=1}^r n_i$, the first lower bound dominates the second,
since $\mathsf{N}(C_{n_i})-1\le2n_i-2$.

\begin{lem}\label{lem6.1}
Let $G=C_p\oplus C_{np}$, where $n\ge2$ and $p\ge3$ is the
smallest prime dividing $|G|$. Then
\[
c_0(G)\le
\begin{cases}
\left\lceil(4np-2)/3\right\rceil,&p\mid n,\\
\left\lceil(25np-20)/6\right\rceil,&p\ge5,\ p\nmid n,\\
\left\lceil(16np-12)/5\right\rceil,&p=3,\ 3\nmid n.
\end{cases}
\]
\end{lem}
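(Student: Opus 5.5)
The plan is to derive all three cases from Theorem~\ref{thm:unified-c0} with $q=p$. Since $G=C_p\oplus C_{np}$, $n\ge2$, and $p\ge3$ is the smallest prime dividing $|G|$, that theorem gives
\[
c_0(G)\le\max\{np+2p^2-p-1,\ n(p+1)-2\},
\]
because $np(p+1)/p-p+p-2=n(p+1)-2$. It then remains to compare each term with the stated right-hand side. The term $n(p+1)-2$ is harmless in every case. For instance, $n(p+1)-2\le(4np-2)/3$ is equivalent to $np-3n+4\ge0$, which holds since $p\ge3$. The other two targets are larger still. So the whole difficulty lies in the term $np+2p^2-p-1$, which is large when $n$ is small compared with $p$.

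\emph{Case $p\nmid n$.} Every prime divisor of $n$ is at least $p$, and $p\nmid n$, so every prime divisor of $n$ exceeds $p$; in particular $n\ge p+2$.
\begin{itemize}
\item For $p\ge5$, the inequality $6(np+2p^2-p-1)\le25np-20$ is equivalent to $19np\ge12p^2-6p+14$. This follows from $n\ge p+2$, since $19p(p+2)=19p^2+38p$.
\item For $p=3$, the first term is $3n+14$. The inequality $5(3n+14)\le48n-12$ reduces to $33n\ge82$, which holds because $n\ge5$.
\end{itemize}
Taking ceilings is then automatic, since $c_0(G)$ is an integer.

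\emph{Case $p\mid n$.} Write $n=pm$. The inequality $np+2p^2-p-1\le(4np-2)/3$ is equivalent to $np\ge6p^2-3p-1$. This holds as soon as $m\ge6$, because then $np=mp^2\ge6p^2$. This small-$m$ range is where the main obstacle lies: the unified bound is simply too weak there, so I would treat it by hand. Every prime factor of $m$ is at least $p$, so $m<6$ leaves only the following possibilities.
\begin{itemize}
\item $m=1$.
\item $m=p$, which requires $p\in\{3,5\}$.
\item $p=3$ and $m=5$.
\end{itemize}
The $p$-group cases, namely $m=1$ and $m=p$, I would settle with Lemma~\ref{lem2.4}.
\begin{itemize}
\item For $m=1$, Lemma~\ref{lem2.4} gives $c_0=p^2+2p-3$. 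The inequality $p^2+2p-3\le(4p^2-2)/3$ amounts to $p^2-6p+7\ge0$, which holds for $p\ge5$. For $p=3$ the value $12$ equals $\lceil34/3\rceil$, so the ceiling is exactly what makes this case work.
\item For $m=p\in\{3,5\}$, the group is $C_p\oplus C_{p^3}$, and Lemma~\ref{lem2.4} gives $p^3+2p-3$. This is well below $\lceil(4p^3-2)/3\rceil$.
\end{itemize}
The remaining case $p=3$, $m=5$ has $np=45$. Here the unified bound gives $\max\{59,58\}=59$, which is at most $\lceil178/3\rceil=60$.

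Collecting the three cases proves the lemma. The only delicate points are the small-$m$ enumeration when $p\mid n$ and the boundary equality at $G=C_3\oplus C_9$.
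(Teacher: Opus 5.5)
Your proof is correct and takes essentially the same route as the paper. Both arguments use Theorem~\ref{thm:unified-c0} with $q=p$ for the generic cases and Lemma~\ref{lem2.4} for the $p$-group cases; the only difference is bookkeeping, since the paper splits $p\mid n$ into ``$n$ a power of $p$'' versus ``$n\ge p(p+2)$'', whereas you use the threshold $n/p\ge6$ and check the finitely many leftover cases, including $C_3\oplus C_{45}$, by hand.
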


\begin{proof}
Suppose first that $p\mid n$.
If $n$ is a power of $p$, then
$c_0(G)=np+2p-3\le\lceil(4np-2)/3\rceil$, since
$np\ge p^2\ge6p-9$.
Otherwise $n\ge p(p+2)$, whence
$np\ge p^2(p+2)\ge6p^2-3p-1$. Thus
\[
np+2p^2-p-1\le\frac{4np-2}{3},
\qquad
n(p+1)-2\le\frac{4np-2}{3}.
\]
Theorem~\ref{thm:unified-c0} proves the first assertion.
If $p\nmid n$, then $n\ge p+2$, and the same theorem gives
$c_0(G)\le3np$.
For $p\ge5$, this is at most $\lceil(25np-20)/6\rceil$;
for $p=3$, it is at most $\lceil(16np-12)/5\rceil$,
since $np\ge15$.
\end{proof}

\begin{prop}\label{prop:regular-N-reduction}
Let $G=C_p\oplus C_{np}$, where $p$ is prime and $n\ge1$, and let $l_0$ be a positive integer.
Let $r=\lceil l_0/(np-1)\rceil$. If
\[
l_0-\lfloor l_0/r\rfloor\ge c_0(G),
\]
then every regular sequence $S$ over $G$ with $|S|\ge l_0$
has a nonempty zero-sum subsequence $T$ satisfying
$\mathsf v_g(T)=\mathsf v_g(S)$ for some $g\mid S$.
\end{prop}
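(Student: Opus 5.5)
Since $S$ is regular, every term other than $0$ lies in some proper subgroup, and $0$ itself has multiplicity at most $0$ because $|S_{\{0\}}|\le 0$. The plan is to delete one whole support class from $S$ and show that what remains is still an additive basis. Then the negative of the deleted class's sum is a sum of the remainder, which gives the required zero-sum subsequence.

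First we bound the multiplicities. Every nonzero $g\in G$ generates a proper subgroup, because $G$ is noncyclic. Regularity applied to $\langle g\rangle$ gives $\mathsf v_g(S)\le \ord(g)-1\le np-1$. The support of $S$ therefore has size at least $|S|/(np-1)\ge l_0/(np-1)$, and so $|\supp(S)|\ge r$ when $|S|\ge l_0$. We may assume $|S|=l_0$ by passing to a subsequence, since a regular subsequence stays regular. Doing so is harmless provided we keep the conclusion about full multiplicities: we take the subsequence $S'$ consisting of all copies of some elements together with part of one class, and check this carefully. A cleaner alternative is to argue directly with $|S|\ge l_0$. In that case we use the fact that $x\mapsto x-\lfloor x/r'\rfloor$ is nondecreasing, where $r'$ is the support size, and $r'\ge r$.

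Next we choose an element $g\in\supp(S)$ of minimal multiplicity. Then $\mathsf v_g(S)\le \lfloor |S|/r'\rfloor\le\lfloor |S|/r\rfloor$. Put $S_1=S g^{-\mathsf v_g(S)}$. We have $|S_1|\ge |S|-\lfloor |S|/r\rfloor$. We need this to be at least $l_0-\lfloor l_0/r\rfloor$, which holds because $x-\lfloor x/r\rfloor$ is nondecreasing in $x$. Hence $|S_1|\ge c_0(G)$. As a subsequence of a regular sequence, $S_1$ is regular, so by the definition of $c_0(G)$ we get $\sum(S_1)=G$.

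Finally, $-\mathsf v_g(S)g$ is either $0$ or lies in $G=\sum(S_1)$. If it is nonzero, choose $T_2\mid S_1$ with $\sigma(T_2)=-\mathsf v_g(S)g$ and set $T=g^{\mathsf v_g(S)}T_2$. Then $T$ is zero-sum and satisfies $\mathsf v_g(T)=\mathsf v_g(S)$. If instead $\mathsf v_g(S)g=0$, then $T=g^{\mathsf v_g(S)}$ already works. This is in effect Lemma~\ref{lem2.9} with $H=G$. The only delicate step is the monotonicity and counting in the second paragraph, which guarantees that the minimal-multiplicity class is small. Everything else is routine.
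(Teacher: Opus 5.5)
Your proof is correct and follows the paper's own argument. Regularity gives $\mathsf v_g(S)\le \ord(g)-1\le np-1$, so $|\supp(S)|\ge r$; deleting all copies of an element $g$ of minimum multiplicity leaves a regular sequence of length at least $|S|-\lfloor |S|/r\rfloor\ge l_0-\lfloor l_0/r\rfloor\ge c_0(G)$, which therefore has a subsequence summing to $-\mathsf v_g(S)g$. The tentative reduction to $|S|=l_0$ in your second paragraph is unnecessary and would need care to preserve full multiplicities, so you were right to argue directly with $|S|\ge l_0$ instead.
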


\begin{proof}
Since $\langle g\rangle<G$ for every $g\in G$,
regularity gives $\mathsf v_g(S)\le\ord(g)-1\le np-1$.
Hence $|\supp(S)|\ge r$.
Choose $g_0\mid S$ with minimum multiplicity
$a=\mathsf v_{g_0}(S)$. The sequence $S'=Sg_0^{-a}$
contains none of the occurrences of $g_0$.
Then $S'$ is regular, $a\le\lfloor |S|/r\rfloor$, and
\[
|S'|\ge |S|-\lfloor |S|/r\rfloor
\ge l_0-\lfloor l_0/r\rfloor\ge c_0(G).
\]
Choose $T'\mid S'$ with $\sigma(T')=-ag_0$.
Then $T=T'g_0^a$ has the required property.
\end{proof}

For $n\ge2$ and $p\ge3$ the smallest prime dividing $|G|$, let
\[
\operatorname{ub}(G)=\begin{cases}
2np-1,&p\mid n,\\
5np-4,&p\ge5,\ p\nmid n,\\
4np-3,&p=3,\ 3\nmid n.
\end{cases}
\]

\begin{cor}\label{cor:regular-main}
Let $G=C_p\oplus C_{np}$, where $n\ge2$ and $p\ge3$ is the
smallest prime dividing $|G|$. Every regular sequence $S$ over $G$
with $|S|\ge \operatorname{ub}(G)$ has a nonempty zero-sum subsequence $T$ such that
$\mathsf v_g(T)=\mathsf v_g(S)$ for some $g\mid S$.
\end{cor}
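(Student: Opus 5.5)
The plan is to apply Proposition~\ref{prop:regular-N-reduction} with $l_0=\operatorname{ub}(G)$ and then use Lemma~\ref{lem6.1} to verify its hypothesis $l_0-\lfloor l_0/r\rfloor\ge c_0(G)$. Here $r=\lceil l_0/(np-1)\rceil$. Since a regular sequence of length at least $\operatorname{ub}(G)$ has length at least $l_0$, the corollary then follows at once.

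First compute $r$ in each case. If $p\mid n$, then $l_0=2np-1$ and $l_0/(np-1)=2+1/(np-1)$, so $r=3$. If $p=3$ and $3\nmid n$, then $l_0=4np-3=4(np-1)+1$, so $r=5$. If $p\ge5$ and $p\nmid n$, then $l_0=5np-4=5(np-1)+1$, so $r=6$.

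Next, since $\lfloor l_0/r\rfloor\le l_0/r$ and $c_0(G)$ is an integer, it suffices to show
\[
c_0(G)\le\Bigl\lceil l_0-\frac{l_0}{r}\Bigr\rceil=\Bigl\lceil\frac{(r-1)l_0}{r}\Bigr\rceil .
\]
Indeed, $l_0-\lfloor l_0/r\rfloor$ is an integer that is at least $(r-1)l_0/r$, hence at least its ceiling. The three values of $(r-1)l_0/r$ are:
\begin{itemize}
\item for $r=3$: $\tfrac23(2np-1)=(4np-2)/3$;
\item for $r=5$: $\tfrac45(4np-3)=(16np-12)/5$;
\item for $r=6$: $\tfrac56(5np-4)=(25np-20)/6$.
\end{itemize}
These are exactly the three bounds in Lemma~\ref{lem6.1}, so the hypothesis of Proposition~\ref{prop:regular-N-reduction} holds in every case.

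The only delicate point is this bookkeeping: confirming the value of $r$ and checking the floor/ceiling step, which is why Lemma~\ref{lem6.1} was stated with ceilings. Once these match, the corollary follows directly.
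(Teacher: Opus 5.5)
Your proposal is correct and is essentially the paper's proof: apply Proposition~\ref{prop:regular-N-reduction} with $l_0=\operatorname{ub}(G)$, note that $r=3,5,6$ in the three cases, and observe that the resulting hypothesis is exactly Lemma~\ref{lem6.1}. Your explicit check that $l_0-\lfloor l_0/r\rfloor\ge\lceil (r-1)l_0/r\rceil$ (in fact an equality, since $l_0$ is an integer) spells out the floor/ceiling bookkeeping that the paper leaves implicit.
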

\begin{proof}
Apply Proposition~\ref{prop:regular-N-reduction} with $l_0=\operatorname{ub}(G)$.
The corresponding values of $r$ are $3$, $6$, and $5$,
according as $p\mid n$, $p\ge5$ and $p\nmid n$, or
$p=3$ and $3\nmid n$.
In each case, the required inequality is precisely
Lemma~\ref{lem6.1}.
\end{proof}

\begin{lem}\label{lem:4.0}
Let $S$ be a non-regular sequence over a finite abelian group $G$.
Then there exists a proper subgroup $H<G$ such that $S_H$
is regular over $H$, $|S_H|\ge|H|$, and $\sum(S_H)=H$.
\end{lem}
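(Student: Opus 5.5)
Since $S$ is non-regular, the set of proper subgroups $H<G$ with $|S_H|\ge|H|$ is nonempty. We choose $H$ minimal with respect to inclusion in this set; this is exactly the device used in the proof of Lemma~\ref{lem:c0-KneserW0}. We then check the three required properties for this $H$.

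\emph{$S_H$ is regular over $H$.} Let $H'<H$ be a proper subgroup. Then $H'$ is also a proper subgroup of $G$, and $(S_H)_{H'}=S_{H'}$. Minimality of $H$ therefore gives $|S_{H'}|\le|H'|-1$, which is regularity of $S_H$ over $H$. In particular, if $H=\{0\}$ this condition would be vacuous, so we note that the trivial subgroup can occur: $|S_{\{0\}}|\ge1$ means $0\mid S$. In that case $S_H=0^{\mathsf v_0(S)}$ is nonempty, it is regular over $H=\{0\}$ (there are no proper subgroups to check), and $\sum(S_H)=\{0\}=H$. So the degenerate case causes no problem.

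\emph{$\sum(S_H)=H$.} The inequality $|S_H|\ge|H|$ holds by the choice of $H$. Apply Lemma~\ref{lem:c0-regular-sums} to $S_H$ as a regular sequence over the group $H$. Since $|S_H|\ge|H|>|H|-1$, we get $\sum_0(S_H)=H$. It remains to upgrade $\sum_0$ to $\sum$, i.e.\ to show that $0$ is the sum of a \emph{nonempty} subsequence. This holds because $|S_H|\ge|H|\ge\mathsf D(H)$, so $S_H$ contains a nonempty zero-sum subsequence. Hence $\sum(S_H)=\sum_0(S_H)=H$.

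The only subtle point is this last upgrade, together with making sure minimality is taken over the correct family, namely the proper subgroups of $G$ violating regularity. Everything else follows directly from Lemma~\ref{lem:c0-regular-sums} and the trivial bound $\mathsf D(H)\le|H|$.
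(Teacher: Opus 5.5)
Your proof is correct and follows essentially the paper's argument: take $H$ minimal among the proper subgroups with $|S_H|\ge|H|$, get regularity of $S_H$ over $H$ from minimality, and conclude $\sum(S_H)=H$. The differences are cosmetic. The paper takes $H$ of minimum order rather than inclusion-minimal. It also simply cites $c_0(H)\le|H|$ from Lemma~\ref{lem2.3}, whereas you re-derive that bound from Lemma~\ref{lem:c0-regular-sums} together with $\mathsf D(H)\le|H|$.
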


\begin{proof}
Choose $H<G$ of minimum order subject to $|S_H|\ge|H|$.
For every proper subgroup $H'<H$, minimality gives
$|S_{H'}|\le|H'|-1$. Thus $S_H$ is regular over $H$.
Since $c_0(H)\le|H|$, we obtain $\sum(S_H)=H$.
\end{proof}

\begin{lem}\label{lem:N-heavy-fiber}
Let $S=S_HS_1$ with $\sum_0(S_H)=H$ and
$\supp(S_1)\subseteq G\setminus H$, and let $\Phi:G\to G/H$
be the natural homomorphism.
Suppose that some $\bar g\in\supp(\Phi(S_1))$ satisfies
$\mathsf v_{\bar g}(\Phi(S_1))\ge\ord(\bar g)$ and that
$\Phi^{-1}(\bar g)\cap\supp(S_1)$ contains at least two elements.
Then $S$ has a nonempty zero-sum subsequence $T$ with
$\mathsf v_g(T)=\mathsf v_g(S)$ for some $g\mid S_1$.
\end{lem}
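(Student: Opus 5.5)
Let $\bar g$ be as in the statement and let $d=\ord(\bar g)$ in $G/H$. Choose two distinct elements $g_1,g_2\in\Phi^{-1}(\bar g)\cap\supp(S_1)$. The plan is to find a subsequence $U\mid S_1$, consisting only of terms lying over $\bar g$, such that $\sigma(U)\in H$ and $U$ contains all occurrences in $S$ of one of $g_1,g_2$. Lemma~\ref{lem2.9} then finishes the argument: take $S$ there to be $S_H$ (so $H\subseteq\sum_0(S_H)$) and $S'$ to be $S_1$. The element $g\in\{g_1,g_2\}$ lies outside $H$, so $g\notin\supp(S_H)$. Moreover every occurrence of $g$ in $S$ lies in $S_1$, so $\mathsf v_g(U)=\mathsf v_g(S_1)=\mathsf v_g(S)$.

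To construct $U$, note that the condition $\sigma(U)\in H$ depends only on $|U|$ modulo $d$, since every term of $U$ maps to $\bar g$. So it suffices to find $U\mid \Phi^{-1}(\bar g)$-part of $S_1$ with $|U|\equiv0\pmod d$ and $|U|>0$ that swallows all copies of $g_1$ or all copies of $g_2$. Let $a=\mathsf v_{g_1}(S_1)$ and $b=\mathsf v_{g_2}(S_1)$, both at least $1$. Let $N$ be the number of terms of $S_1$ lying over $\bar g$; by hypothesis $N\ge d$. Without loss of generality $a\le b$.

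\begin{itemize}
\item If $a\equiv0\pmod d$, take $U=g_1^{a}$.
\item Otherwise, take $U=g_1^aV$, where $V$ is formed from the other terms over $\bar g$ and has length $d\lceil a/d\rceil-a$, which lies in $[1,d-1]$.
\end{itemize}

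The second option requires $N-a\ge d\lceil a/d\rceil-a$, that is, $N\ge d\lceil a/d\rceil$. This is the main obstacle: when $a$ is large, $N$ may not be big enough to round $a$ up to a multiple of $d$.

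To get around this, I would use the freedom to choose between $g_1$ and $g_2$ and the option of rounding down instead of up. Consider the partial lengths formed by first taking all $a$ copies of $g_1$ and then adding further terms one at a time, among which the $b$ copies of $g_2$ must all eventually be included if we pass through them. The lengths available are every integer in $[a,N]$, using copies of $g_2$ and other terms in any order, so it suffices that $[a,N]$ contains a positive multiple of $d$. Symmetrically, with $g_2$ the available interval is $[b,N]$. Since $a\le b$, the $g_1$ option is the weaker requirement, and it fails only if $N<d\lceil a/d\rceil$, i.e. no multiple of $d$ lies in $[a,N]$.

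In that case I would instead take all of $S_1$'s terms over $\bar g$ except a short tail. Choose $U$ to be all $N$ terms over $\bar g$ minus $N-d\lfloor N/d\rfloor$ terms, removed from copies of $g_2$ only if necessary, so that all copies of $g_1$ stay in $U$. This needs $N-a\ge N-d\lfloor N/d\rfloor$, i.e. $a\le d\lfloor N/d\rfloor$. If this also fails, then $d\lfloor N/d\rfloor<a\le N<d\lceil a/d\rceil$, so $a$ and $N$ lie strictly inside the same block between consecutive multiples of $d$. Then $a+b\le N$ forces $b<d$, and I would repeat the argument with $g_2$ playing the role of $g_1$, keeping all copies of $g_2$. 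The condition is then $b\le d\lfloor N/d\rfloor$, which holds because $b<d\le d\lfloor N/d\rfloor$, using $N\ge d$. In every case a valid $U$ exists, and Lemma~\ref{lem2.9} completes the proof.
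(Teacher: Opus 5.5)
Your proof is correct and follows essentially the paper's route: both reduce to finding a nonempty subsequence of length divisible by $\ord(\bar g)$ inside the fiber over $\bar g$ that contains every copy of some element, and then apply Lemma~\ref{lem2.9} with $S_H$ and $S_1$; the paper instead splits according to whether some multiplicity in the fiber is at least $\ord(\bar g)$. Your fallback cases never occur: since $a\le b$ gives $2a\le a+b\le N$, the interval $[a,N]$ always contains a positive multiple of $d$ ($d$ itself if $a\le d$, and otherwise because $N-a\ge a>d$), so your first construction $U=g_1^aV$ already suffices.
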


\begin{proof}
Let $V\mid S_1$ consist of all terms whose image is $\bar g$,
and let $n=\ord(\bar g)$.
If $\mathsf v_h(V)\ge n$ for some $h\mid V$, let $|V|=kn+t$,
where $0\le t<n$, and take $T_1=Vh^{-t}$.
Then $\sigma(T_1)\in H$, and
$\mathsf v_g(T_1)=\mathsf v_g(S_1)$ for every $g\mid V$ with $g\ne h$.
Such an element exists by hypothesis.
Otherwise, choose $g\mid V$. Since $\mathsf v_g(V)<n\le|V|$,
there is an $n$-term subsequence $T_1\mid V$ containing all occurrences
of $g$. Again $\sigma(T_1)\in H$ and
$\mathsf v_g(T_1)=\mathsf v_g(S_1)$.
Lemma~\ref{lem2.9} completes the proof in both cases.
\end{proof}

\begin{lem}\label{lem:N-index-p-generators}
Let $G=C_p\oplus C_{np}$, where $p\ge3$ is the smallest prime
dividing $|G|$, $n\ge5$, and $H<G$ is cyclic of order $np$.
Let $S=S_Hg_0^b$ satisfy $g_0\notin H$, $\ord(g_0)=np$,
$b\le np-1$, $|S|\ge2np-1$, and $|S_H|\ge np$.
Suppose that every term of $S_H$ generates $H$.
Choose $h\mid S_H$ with minimum multiplicity $a$.
Let $R=S_Hh^{-a}$. If $|R|\le np-1$ and $a<np/2$,
then $S$ has a nonempty zero-sum subsequence $T$ with $\mathsf v_h(T)=\mathsf v_h(S)$.
\end{lem}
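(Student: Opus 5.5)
The plan is to reduce everything to a sumset computation inside the cyclic group $H$. Fix the generator $h$ and write every term of $S_H$ as $u h$ with $u$ a unit modulo $np$. The required $T$ will be $T = h^{a}\,T'\,g_0^{pj}$ with $T'\mid R$ and $0\le pj\le b$, where
\[
\sigma(T') + j\,(pg_0) = -ah .
\]
Such a $T$ contains every occurrence of $h$, because $h\notin\supp(R)$ and $h\ne g_0$. Since $g_0\notin H$ and $G/H\cong C_p$, we have $pg_0\in H$, and $\ord(g_0)=np$ gives $\ord(pg_0)=n$. So the task is to show
\[
-ah\in \textstyle\sum_0(R)+A,\qquad A=\{\,j\,pg_0 : 0\le j\le \lfloor b/p\rfloor\,\}\subseteq K:=\langle pg_0\rangle,\ |K|=n .
\]

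First I record the length bookkeeping.
\begin{itemize}
\item From $|S_H|=a+|R|\ge np$ and $|R|\le np-1$ we get $a\ge1$.
\item From $|S|\ge 2np-1$ we get $b\ge 2np-1-a-|R|\ge np-a>np/2$.
\item Consequently $|A|\ge\min\{n,\lfloor b/p\rfloor+1\}$, which is at least about $n/2$.
\item Also $|R|\ge np-a>np/2$. Since $h$ has minimum multiplicity and $R$ is nonempty, $R$ has at least as many terms as $h$ has copies.
\end{itemize}

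Next I apply Kneser's theorem (Lemma~\ref{lem2.1}) in $H$ to the sumset $\sum_0(R)+A$, writing $\sum_0(R)$ as $\{0,x_1\}+\cdots+\{0,x_{|R|}\}$. Let $L=\st(\sum_0(R)+A)$. If $L=H$ we are done. If $\{0\}<L<H$, no term of $R$ lies in $L$, because each term generates $H$. Hence each set $\{0,x_i\}+L$ has size $2|L|$, and Kneser gives $|\sum_0(R)+A|\ge(|R|+1)|L|$. Since $|L|\ge p\ge3$ and $|R|>np/2$, this exceeds $np$, a contradiction. It remains to treat $L=\{0\}$, where Kneser only yields
\[
\Bigl|\textstyle\sum_0(R)+A\Bigr|\ge |R|+|A| .
\]

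The main obstacle is this trivial-stabilizer case, because $|R|+|A|\ge np$ is not automatic: $|A|$ is only about $b/p$, not $b$. My first attempt would be to avoid covering all of $H$ and instead target the specific element $-ah$, or equivalently $(np-a)h$. I would use the slack $a<np/2$ together with the freedom to choose $T'$ containing most of $R$. Concretely, $-ah=\sigma(R)-\sigma(S_H)$, and a complementary count of $\sum_0(R)$ from the top should work: the set $\sigma(R)-\sum_0(R)$ has the same size as $\sum_0(R)$. I would then intersect the two large sets $\sum_0(R)+A$ and $-ah-\sum_0(R)-A$ inside $H$; each has size at least $|R|+|A|>np/2+n/2$. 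A pigeonhole argument then gives a common element, namely two sums $\sigma(T_1')+j_1pg_0=-ah-\sigma(T_2')-j_2pg_0$. The difficulty is to make $T_1'$ and $T_2'$ disjoint and to have $j_1+j_2\le\lfloor b/p\rfloor$. I would handle this by splitting $R=R_1R_2$ into halves and $g_0^b$ into two blocks, and applying Kneser to each half separately. The hypothesis $n\ge5$ should be exactly what makes $|R_i|+\lfloor b/(2p)\rfloor$ large enough for the two sumsets in $H$ to overlap.

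If that overlap argument falls short, the fallback is Lemma~\ref{lem2.9}. I would find a subsequence $T_1$ of $h^a g_0^{b}$ containing $h^a$ with $\sigma(T_1)$ in a subgroup of $H$ that is already covered by $\sum_0(R)$. Taking $T_1=h^{a}g_0^{pj}$ with $j$ chosen so that $\sigma(T_1)\in L'$ for a suitable subgroup $L'\le H$ would do this, with $L'$ supplied by the nontrivial-stabilizer analysis applied to $\sum_0(R)$ alone.
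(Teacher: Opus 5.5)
\textbf{There is a genuine gap: the trivial-stabilizer case, which carries the whole difficulty of the lemma, is never proved.} Your reduction to showing $-ah\in\sum_0(R)+A$ with $A=\{j\,pg_0:0\le j\le\lfloor b/p\rfloor\}$ is correct. Your Kneser argument also correctly rules out a stabilizer $L$ with $\{0\}<L<H$.

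With the summands $\{0,x_i\}$ and $A$, Kneser only gives $|\sum_0(R)+A|\ge|R|+|A|$, and this can fall well short of $np$. Take $p=3$, $n=5$, $S_H=h^7g_1^8$ with $h\ne g_1$ generators of $H$, and $b=14$. All hypotheses hold ($a=7<7.5$, $|R|=8$, $|S|=29$), yet $|R|+|A|=8+5=13<15$.

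Your overlap scheme cannot improve on this bound. Intersecting $\sum_0(R)+A$ with $-ah-\sum_0(R)-A$ yields overlapping subsequences. After splitting $R=R_1R_2$ and $g_0^b$ into two blocks with $J_1+J_2\le\lfloor b/p\rfloor$, the pigeonhole needs $|R_1|+|R_2|+(J_1+1)+(J_2+1)>np$. That is $|R|+|A|+1>np$ again, which fails in the example. The factor two gained by pigeonhole is exactly lost by halving, so $n\ge5$ cannot rescue it.

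The fallback fails as well. In the example $\sum_0(R)=\{0,g_1,\dots,8g_1\}$ is an arithmetic progression with trivial stabilizer and contains no nontrivial subgroup of $H$. So there is no $L'$ to feed into Lemma~\ref{lem2.9}.

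\textbf{The missing idea is to work in the index-$p$ subgroup $pH=pG\cong C_n$ using blocks of $p$ equal terms rather than single terms.} Write $b=ps_g+t_g$ and the multiplicities in $R$ as $ps_i+t_i$ with $0\le t<p$.
\begin{itemize}
\item \emph{Case $a\ge2p$.} $R$ has at most $(np-1)/a$ distinct elements. A convexity estimate in $a$ over $[2p,np/2]$ gives $s_g+\sum_i s_i\ge n$; this is where $n\ge5$ enters. Since $s_g\le n-1$, some $s_i>0$. The block sums $(pg_0)^{s_g}\prod_j(pg_j)^{s_j}$ with one $pg_i$ withheld give at least $n-1$ generators of $pH$, so their $\sum_0$ is $pH$ by Lemma~\ref{lem:c0-regular-sums}. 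The withheld block $g_i^p$ meets every coset of $pH$ in $H$, so $H\subseteq\sum_0(R)+A$.
\item \emph{Case $a\le2p-1$.} Here $s_g\ge n-2$. Take two disjoint $p$-term subsequences of $R$. Two distinct sums of the first lie in the same $pH$-coset, and a common correction from the second produces $W\mid R$ with $|W|\le2p$ and $\sigma(W)\in pH\setminus\{0\}$. Then $(pg_0)^{s_g}\sigma(W)$ covers $pH$. The remaining at least $p+1$ terms of $R$ are nonzero modulo $pH$ and cover $H/pH$ by Cauchy--Davenport.
\end{itemize}
In your framework, the fix is to run the covering argument on summands lying in $pH$, namely these block sums, instead of on the two-element sets $\{0,x_i\}$.
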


\begin{proof}
Let $b=ps_g+t_g$, and
$R=\prod_{i=1}^{\ell-1}g_i^{ps_i+t_i}$, where $0\le t_i,t_g<p$.
First suppose that $a\ge2p$.
As $\ell\le1+(np-1)/a$, convexity of
$x+(p-1)(np-1)/x$ on $[2p,np/2]$ gives
\[
p\Big(s_g+\sum_i s_i\Big)
\ge2np-1-a-\ell(p-1)
\ge3np/2-3p\ge np-p/2.
\]
Thus $s_g+\sum_i s_i\ge n$.
Since $s_g\le n-1$, choose $i$ with $s_i>0$.
Consider $U=(pg_0)^{s_g}\prod_j(pg_j)^{s_j}$ and delete one
occurrence of $pg_i$.
The remaining sequence has at least $n-1$ terms, all generating
$pH=pG$, so its $\sum_0$ equals $pH$ by
Lemma~\ref{lem:c0-regular-sums}.
The deleted occurrence of $pg_i$ leaves $g_i^p$ unused in the corresponding
subsequences of $Rg_0^{ps_g}$. Since $\sum_0(g_i^p)+pH=H$, we obtain
$H\subseteq\sum_0(Rg_0^{ps_g})$.

Now suppose that $a\le2p-1$.
Then $s_g\ge n-2$ and $|R|\ge(n-2)p+1\ge3p+1$.
Take disjoint $U,V\mid R$ with $|U|=|V|=p$.
Kneser's theorem gives $|\sum_0(U)|\ge p+1$.
Hence two different sums from $U$ have the same image in $H/pH$.
Using $\sum_0(V)+pH=H$, extend both by the same subsequence of $V$
to sums in $pH$. At least one is nonzero.
Thus there is $W\mid UV$ with $\sigma(W)\in pH\setminus\{0\}$.
Since $pg_0$ generates $pH$ and $\sigma(W)\ne0$, every proper
subgroup of $pH$ contains at most one term of $(pg_0)^{s_g}\sigma(W)$,
and the trivial subgroup contains none. This sequence is therefore regular
over $pH$ and has length at least $n-1$, so its $\sum_0$ equals $pH$.
Also $|RW^{-1}|\ge p+1$, and its image has all terms nonzero in $H/pH$.
Again $H\subseteq\sum_0(Rg_0^{ps_g})$.
In either case choose $T_1\mid Rg_0^{ps_g}$ with
$\sigma(T_1)=-ah$ and take $T=T_1h^a$.
\end{proof}

\begin{lem}\label{lem:N-index-p-low-order}
Let $G=C_p\oplus C_{np}$, where $n\ge2$ and $p\ge3$ is the
smallest prime dividing $|G|$, and let $H<G$ be cyclic of order $np$.
Let $S=S_Hg_0^b$ satisfy $g_0\notin H$, $\ord(g_0)=np$,
$b\le np-1$, $|S|\ge2np-1$, and suppose that $S_H$ is regular over $H$.
If $\ord(h')\le n$ for some $h'\mid S_H$, then $S$ has a nonempty zero-sum
subsequence $T$ with $\mathsf v_{h'}(T)=\mathsf v_{h'}(S)$.
\end{lem}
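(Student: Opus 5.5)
The plan is to remove all copies of $h'$ and show that what remains can still produce the sum $-ah'$, where $a=\mathsf v_{h'}(S)$. Let $K=\langle h'\rangle$ and $d=\ord(h')\le n$. Since $S_H$ is regular over $H$ and $K<H$, we have $a\le|S_K|\le d-1\le n-1$. Put $R=S_Hh'^{-a}$, which is still regular over $H$, and $S'=Sh'^{-a}=Rg_0^b$. It suffices to find $T_1\mid S'$ with $\sigma(T_1)=-ah'$ and take $T=T_1h'^a$. Since $-ah'\in H$, the concrete target is $H\subseteq\sum_0(S')$.

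First I use only the sums of $g_0^b$ that land in $H$. Because $G/H\cong C_p$ and $\ord(g_0)=np$, the element $pg_0$ lies in $H$, has order $n$, and generates $pH$. Let $c=\lfloor b/p\rfloor$ and
\[
A=\{0,pg_0,2pg_0,\ldots,c\,pg_0\}\subseteq\sum_0(g_0^b),
\]
so $\sum_0(R)+A\subseteq\sum_0(S')\cap H$ and $|A|=\min\{c+1,n\}$.

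I split into two cases.
\begin{enumerate}
\item If $|R|\ge np-1$, then Lemma~\ref{lem:c0-regular-sums} applied in $H$ already gives $\sum_0(R)=H$, and we are done.
\item Otherwise $|S_H|\le np-2+a$, so $b\ge 2np-1-|S_H|\ge np+1-a-\ldots$. More usefully, $b\le np-1$ forces $|S_H|\ge np$, so $|R|\ge np-a\ge np-n+1$. I then apply Kneser's theorem (Lemma~\ref{lem2.1}) in $H$ to $\sum_0(R)+A$, with $L$ the stabilizer of this sumset.
\end{enumerate}

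If $L=\{0\}$, Lemma~\ref{lem:c0-regular-sums} gives $|\sum_0(R)|\ge|R|+1$, and Kneser gives
\[
|\sum_0(R)+A|\ge|R|+|A|\ge |S_H|-a+1+\min\{c,n-1\}.
\]
Using $|S_H|+b\ge2np-1$ and $a\le n-1$, I check that this is at least $np$, so the sumset is all of $H$.
\begin{itemize}
\item If $c\ge n-1$, the bound is $\ge np-a+n\ge np+1$.
\item If $c<n-1$, then $b<(n-1)p+p$ and $|S_H|\ge 2np-1-b$. The bound becomes roughly $2np-b(1-1/p)-a$, and using $b\le np-1$ this is at least about $np+n-a\ge np$.
\end{itemize}
The floor from $c=\lfloor b/p\rfloor$ costs at most $(p-1)/p<1$, which the slack absorbs.

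If $L\ne\{0\}$ and $L\ne H$, I mimic the proof of Lemma~\ref{lem:c0-regular-sums}. Here $\sum_0(R)+L$ has size at least $(|R|-|R_L|+1)|L|\ge(|R|-|L|+2)|L|$, by regularity of $R$ over $H$. Also $A+L$ has size at least $\min\{c+1,\,n/|L\cap pH|\}\cdot|L|$, since $pg_0$ generates $pH$. The resulting lower bound is a quadratic in $|L|$. I show it is at least $np$ by evaluating at the extreme values $|L|=p$ and $|L|=np/p=n$, exactly as in Lemma~\ref{lem:c0-KneserW0}, using $|L|\le n\le np/p$ and $|R|\ge np-n+1$. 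This would contradict $L\ne H$.

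I expect the main obstacle to be this nontrivial-stabilizer case. The estimate for $|A+L|$ depends on how $L$ meets $pH$: when $L\supseteq pH$, the set $A$ contributes nothing beyond one coset. In that subcase I would instead argue directly that $\sum_0(R)+L=H$. Indeed, $R$ is regular with $|R|\ge np-n+1$, so its image in $H/L$, which is cyclic of order at most $p$, has length $\ge|H/L|-1$ with nonzero terms outside $L$. Cauchy--Davenport (Lemma~\ref{lem2.2}) then covers $H/L$.
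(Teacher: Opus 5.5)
Your proposal is correct and is essentially the paper's argument: your set $\sum_0(R)+\{0,pg_0,\ldots,c\,pg_0\}$ is exactly $\sum_0(U)$ for the paper's sequence $U=R'(pg_0)^{s}$. The paper likewise splits on whether $\st(\sum_0(U))$ is trivial, getting $|U|+1>np$ in one case and the Kneser/regularity bound $|U|\le np/d+d+n-4<np$ in the other. The subcase $L\supseteq pH$ that you flag is not actually an obstacle: already $|A+L|\ge|L|$ gives $|\sum_0(R)+A|\ge(|R|-|L|+2)|L|\ge np$ for $p\le|L|\le n$.
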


\begin{proof}
Let $a=\mathsf v_{h'}(S)$, $R'=S_H(h')^{-a}$,
$s=\lfloor b/p\rfloor$, and $U=R'(pg_0)^s$.
Regularity gives $a\le n-1$, while $s\le n-1$ and
$b-s\le n(p-1)$. Hence $|U|=|S|-a-b+s\ge np$.
Suppose that $\sum_0(U)\ne H$, and let
$H_1=\st(\sum_0(U))$ and $d=|H_1|$.
Since $0\nmid U$, Kneser's theorem implies $H_1\ne\{0\}$.
It also gives
\[
|U|-|U_{H_1}|\le np/d-2,\qquad
|U_{H_1}|\le d-1+s\le d+n-2.
\]
As $p\le d\le n$,
$|U|\le np/d+d+n-4\le3n-4<np$, a contradiction.
Thus $\sum_0(U)=H\subseteq\sum_0(R'g_0^{ps})$.
Choose $T_1\mid R'g_0^{ps}$ with $\sigma(T_1)=-ah'$;
then $T=T_1(h')^a$ has the required property.
\end{proof}

\begin{lem}\label{lem:N-noncyclic-subgroup-three}
Let $H=C_3\oplus C_{3m}$, where $m\ge1$ is odd.
Every regular sequence $R$ over $H$ of length at least $|H|=9m$
has a nonempty zero-sum subsequence $T$ with
$\mathsf v_g(T)=\mathsf v_g(R)$ for some $g\mid R$.
\end{lem}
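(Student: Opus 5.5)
The plan is to follow the strategy of Proposition~\ref{prop:regular-N-reduction}. We remove all occurrences of a least-frequent element $g_0$ and show that the remaining sequence has $-a g_0$ in its $\sum_0$, where $a=\mathsf v_{g_0}(R)$.

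First we bound the multiplicities. Since $R$ is regular over $H$, every cyclic proper subgroup $\langle g\rangle$ satisfies $\mathsf v_g(R)\le\ord(g)-1\le 3m-1$. Hence $|\supp(R)|\ge\lceil 9m/(3m-1)\rceil\ge4$. Choose $g_0\mid R$ with minimum multiplicity $a$. Then $a\le\lfloor|R|/4\rfloor$, and the remainder $R'=Rg_0^{-a}$ is regular over $H$ with $|R'|\ge 3|R|/4\ge 27m/4$.

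We then want $\sum_0(R')=H$; taking $T=T'g_0^a$ with $\sigma(T')=-ag_0$ finishes. The cases are as follows.

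\textbf{Case $m=1$.} Here $H=C_3^2$ and $c_0(H)=5$. We have $|R|\ge9$ and regularity gives multiplicities at most $2$, so $|\supp(R)|\ge5$. Also $a\le\lfloor 9/5\rfloor$ in the extremal case, so $|R'|\ge|R|-2\ge7\ge c_0(H)$ once $|R|\ge9$. This case is done directly.

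\textbf{Case $m\ge3$ with $3\mid m$.} Lemma~\ref{lem6.1} gives $c_0(H)\le\lceil(4\cdot 3m-2)/3\rceil=4m$. Since $27m/4\ge 4m$, we are done.

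\textbf{Case $m\ge5$ with $3\nmid m$.} The smallest prime divisor of $|H|$ is $3$. Lemma~\ref{lem6.1} (or Theorem~\ref{thm:unified-c0} with $q=3$, $n=m\ge5$) gives
\[
c_0(H)\le\max\{3m+14,\ 4m-1\}.
\]
This is at most $6.75m$ for $m\ge5$, so again $R'$ is an additive basis and we conclude.

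The main obstacle is the case when $m$ has a prime factor smaller than $3$. This cannot occur, since $m$ is odd, and that is exactly why oddness is assumed. A secondary obstacle is making sure that Theorem~\ref{thm:unified-c0} applies for small $m$ such as $m=5,7$. There the first term $3m+14$ dominates, and I would check numerically that $3m+14\le\lceil 27m/4\rceil$, which holds for all $m\ge4$. If a tighter case fails, I would instead remove the element of minimum multiplicity among at least $\lceil 9m/(3m-1)\rceil$ support elements and use the sharper count $|\supp(R)|\ge4$ together with $a\le 9m/4$.
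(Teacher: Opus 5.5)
Your proof is correct and follows the paper's argument. Both use $|\supp(R)|\ge4$, delete every copy of a least-frequent element so that $|R'|\ge\lceil 27m/4\rceil$, bound $c_0(H)$ by Theorem~\ref{thm:unified-c0}, and use known values for the small cases. You send the multiples of $3$, including $m=3$, through Lemma~\ref{lem6.1} instead, which is equally valid.

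One small correction: when $3\nmid m$, only Theorem~\ref{thm:unified-c0} does the job, and it gives $\max\{3m+14,4m-2\}$. The bound $\lceil(48m-12)/5\rceil$ from Lemma~\ref{lem6.1} is larger than $27m/4$, so the parenthetical citation of that lemma there should be dropped.
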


\begin{proof}
Every multiplicity is at most $3m-1$, so $|\supp(R)|\ge4$.
Choose $g\mid R$ of minimum multiplicity $a$ and let $R'=Rg^{-a}$.
Then $R'$ is regular over $H$ and
$|R'|\ge\lceil3|R|/4\rceil\ge\lceil27m/4\rceil$.
For $m\ge5$, Theorem~\ref{thm:unified-c0} gives
\[
c_0(H)\le\max\{3m+14,4m-2\}\le6m\le\lceil27m/4\rceil.
\]
For $m=1,3$, use $c_0(C_3^2)=5$ and $c_0(C_3\oplus C_9)=12$.
Thus $\sum(R')=H$. Choose $T'\mid R'$ with $\sigma(T')=-ag$.
Then $T=T'g^a$ is zero-sum and $\mathsf v_g(T)=\mathsf v_g(R)$.
\end{proof}

\begin{lem}\label{lem:N-half-quotient}
Let $H=C_m$, let $R$ be regular over $H$ with $|R|\ge m/2$,
and let $H_0\le H$ be nontrivial. Then $\sum_0(R)+H_0=H$.
\end{lem}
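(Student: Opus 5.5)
The plan is to argue by contradiction using Kneser's theorem (Lemma~\ref{lem2.1}), applied to the set $A=\sum_0(R)+H_0$. Write $R=g_1\cdots g_l$ with $l=|R|\ge m/2$, and let $K=\st(A)$. Since $A$ is a union of $H_0$-cosets, $H_0\le K$, so $K$ is nontrivial. Suppose $A\ne H$. Then $K$ is a proper subgroup of the cyclic group $H$, and so $d:=|K|$ satisfies $2\le d\le m/2$.

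\emph{Step 1 (Kneser).} Because $K+A=A$, we have
\[
A=\sum_0(R)+K=\{0,g_1\}+\cdots+\{0,g_l\}+K,
\]
and the stabilizer of this sum is $K$. Apply Lemma~\ref{lem2.1} to these $l+1$ sets. Each term $g_i\notin K$ gives $|\{0,g_i\}+K|=2d$, each $g_i\in K$ gives $d$, and the last set $K$ gives $d$. Hence
\[
|A|\ge (l-|R_K|)\,2d+|R_K|\,d+d-l\,d=(l-|R_K|+1)\,d .
\]
Since $R$ is regular over $H$ and $K<H$, we have $|R_K|\le d-1$. Therefore $|A|\ge (l-d+2)d$.

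\emph{Step 2 (upper bound).} $A$ is a proper union of $K$-cosets, so $|A|\le m-d$. Combining this with Step~1 gives $(l-d+3)d\le m$. Using $l\ge m/2$, we get
\[
f(d):=(m/2-d+3)\,d-m\le 0 .
\]

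\emph{Step 3 (contradiction).} The function $f$ is concave in $d$, so on the interval $[2,m/2]$ it is at least the smaller of its endpoint values. These are $f(2)=(m/2+1)\cdot2-m=2$ and $f(m/2)=3m/2-m=m/2$, both positive. This contradicts $f(d)\le0$, so $A=H$.

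There is no serious obstacle. The only care needed is in Step~1: we must use that $K$ is exactly the stabilizer of the whole sum (true by definition of $K$) and that $K$ is proper, which is what lets regularity bound $|R_K|$. In a cyclic group, properness of $K$ also gives $d\le m/2$, which is needed for the endpoint check in Step~3.
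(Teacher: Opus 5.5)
Your proof is correct and is essentially the paper's argument. The paper's $H_1$, the preimage of the stabilizer of the image of $\sum_0(R)$ in $H/H_0$, is exactly your $K=\st(\sum_0(R)+H_0)$; the paper combines Kneser's theorem with regularity to get $|R|\le k+m/k-3$ and concludes by convexity of $x+m/x$ on $[2,m/2]$, which is your concavity check on $f(d)$ after multiplying through by $d$.
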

\begin{proof}
Suppose otherwise and let $H_1$ be the inverse image in $H$ of the
stabilizer of the image of $\sum_0(R)$ in $H/H_0$.
For $k=|H_1|$, we have $2\le k\le m/2$.
Kneser's theorem and regularity give
\[
 |R|-|R_{H_1}|\le m/k-2,\qquad
 |R|\le k+m/k-3\le m/2-1,
\]
contrary to $|R|\ge m/2$. The last inequality follows by convexity
of $x+m/x$ on $[2,m/2]$.
\end{proof}

\begin{lem}\label{lem:N-kernel-blocks}
Let $S=S_HS_1$ be a sequence over an abelian group $G$, where
$H=C_m$, $m\ge2$, $S_H$ is regular over $H$, $|S_H|\ge m$,
and every multiplicity in $S_H$ is smaller than the corresponding element order.
Suppose that pairwise disjoint nonempty subsequences of $S_1$ have sums
forming a sequence $U$ over $H$.
If $\sum_0(U)$ contains a nontrivial subgroup $H_0\le H$, then $S$
has a nonempty zero-sum subsequence $T$ with
$\mathsf v_h(T)=\mathsf v_h(S)$ for some $h\mid S_H$.
In particular, this holds if $U$ has no zero term, is supported in a
nontrivial subgroup $K\le H$, and $|U|\ge|K|-1$.
\end{lem}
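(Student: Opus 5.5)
The idea is to pick a term of $S_H$ of small multiplicity, use $H_0$ to move the sums of $S_H$ onto all of $H$, and then cancel that term's full multiplicity. The subsequences whose sums form $U$ are disjoint, so every element of $\sum_0(U)$ is the sum of a subsequence of $S_1$; in particular $H_0\subseteq\sum_0(S_1)$. Write $H=C_m$.

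\textbf{Choosing the term.} The key claim is that for a suitable $h\mid S_H$, with $a=\mathsf v_h(S_H)$ and $R=S_Hh^{-a}$, we have $\sum_0(R)+H_0=H$. Given this claim, take $T_1\mid R$ and $x\in H_0$ with $\sigma(T_1)+x=-ah$. Choose $T_2\mid S_1$, built from the blocks giving $U$, with $\sigma(T_2)=x$. Then $T=h^aT_1T_2$ is zero-sum. It contains every occurrence of $h$ in $S_H$, and hence in $S$, because $\supp(S_1)$ is disjoint from $H$ in the intended setting. If instead $S_1$ could contain $h$, one compares with Lemma~\ref{lem2.9} and chooses $h\notin\supp(S_1)$. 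It is nonempty since $a\ge1$.

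\textbf{Proving the claim.} Let $h$ have minimum multiplicity in $S_H$. If $|R|\ge m/2$, then $R$ is regular over $H$, being a subsequence of a regular sequence, and Lemma~\ref{lem:N-half-quotient} gives exactly $\sum_0(R)+H_0=H$. So it remains to ensure $|R|\ge m/2$.

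\textbf{Getting $|R|\ge m/2$ (main obstacle).} Since $S_H$ is regular over $H$, every $g$ with $\langle g\rangle<H$ has multiplicity at most $\ord(g)-1\le m/2-1$. A generator of $H$ has multiplicity below $m$ by hypothesis.
\begin{itemize}
\item If at least two distinct elements occur, the minimum multiplicity $a$ is at most $|S_H|/2$. Hence $|R|\ge|S_H|/2\ge m/2$.
\item If only one element $h$ occurs, then $|S_H|\ge m>\mathsf v_h(S_H)$, which is impossible.
\end{itemize}
This step is where care is needed: the minimum-multiplicity choice must be combined with the bound $|S_H|\ge m$.

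\textbf{The ``in particular'' case.} Let $U$ have no zero term, be supported in a nontrivial $K\le H$, and have $|U|\ge|K|-1$. Since $K$ is cyclic, each proper subgroup $K'<K$ is cyclic and the terms of $U$ lying in it are nonzero. To get $\sum_0(U)=K$, apply Lemma~\ref{lem:c0-regular-sums} after checking that $U$ is regular over $K$, or apply Lemma~\ref{lem2.8}-type Kneser reasoning if it is not.

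The simplest route is to take $H_0$ to be the subgroup of prime order $q$ in $K$. The terms of $U$ lying in $H_0$ number at most $q-1$ if regularity holds. Otherwise, $q$ nonzero terms in $C_q$ already give $\sum_0=H_0$ by Cauchy--Davenport. In the regular case, Lemma~\ref{lem:c0-regular-sums} gives $\sum_0(U)=K\supseteq H_0$.

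If $U$ is not regular over $K$, pick a minimal $K'$ with $|U_{K'}|\ge|K'|$. Then $U_{K'}$ is regular over $K'$, so $\sum_0(U_{K'})=K'$, which is a nontrivial subgroup. Either way $\sum_0(U)$ contains a nontrivial subgroup, and the main statement applies.
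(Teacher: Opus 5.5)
Your proof is correct, and its main part is the paper's argument. You take $h$ of minimum multiplicity, use the strict multiplicity bound and $|S_H|\ge m$ to get two distinct elements and hence $|R|\ge m/2$, apply Lemma~\ref{lem:N-half-quotient}, and realize the $H_0$-correction as a union of the disjoint blocks behind $U$. Your hedge about $h\in\supp(S_1)$ is unnecessary: $S_H$ consists by definition of all terms of $S$ lying in $H$, so $S_1$ has no terms in $H$.

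For the final assertion you take a different route. The paper applies Kneser's theorem directly. If $\st(\sum_0(U))$ is nontrivial, it lies in $\sum_0(U)$ because $0\in\sum_0(U)$. If it is trivial, then $|\sum_0(U)|\ge|U|+1\ge|K|$, so $\sum_0(U)=K$. You instead split on whether $U$ is regular over $K$. In the regular case you use Lemma~\ref{lem:c0-regular-sums}; otherwise you take a minimal overfull subgroup $K'$, as in Lemma~\ref{lem:4.0}. This is valid, since the absence of zero terms guarantees $K'\neq\{0\}$. It also identifies a specific covered subgroup, but it is longer and rests on the same Kneser estimate.

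Your middle paragraph with the prime-order subgroup is superfluous. Its two cases are not exhaustive on their own: having at most $q-1$ terms in that subgroup does not imply regularity over $K$. It is your last paragraph that makes the case split complete.
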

\begin{proof}
The strict multiplicity bound and $|S_H|\ge m$ ensure that $S_H$
has at least two distinct elements. Choose $h\mid S_H$ of minimum multiplicity $a$. Then $R=S_Hh^{-a}$ has length $|R|\ge m/2$.
Lemma~\ref{lem:N-half-quotient} gives $\sum_0(R)+H_0=H$.
Choose $V\mid R$ with $\sigma(V)+ah\in H_0$.
Since $H_0\subseteq\sum_0(U)$, a subsequence of $U$ has sum
$-\sigma(V)-ah$. The corresponding pairwise disjoint subsequences of
$S_1$ form a subsequence $T_1$ with the same sum.
Thus $T=Vh^aT_1$ is zero-sum. Since $h\nmid R$ and $h\nmid S_1$,
we also have $\mathsf v_h(T)=a=\mathsf v_h(S)$.
For the last assertion, a nontrivial stabilizer of $\sum_0(U)$
is itself contained in $\sum_0(U)$.
If the stabilizer is trivial, Kneser's theorem gives
$|\sum_0(U)|\ge|U|+1\ge|K|$, so $\sum_0(U)=K$.
\end{proof}

\begin{lem}\label{lem:N-cyclic-quotient}
Let $G=C_p\oplus C_{np}$, where $n\ge2$ and $p\ge3$ is the
smallest prime dividing $|G|$.
Let $S=S_HS_1$ satisfy $|S|\ge2np-1$ and
$\mathsf v_g(S)<\ord(g)$ for every $g\mid S$.
Suppose that $H<G$ is cyclic of order $m$, that $S_H$ is regular over $H$,
and that $m\le|S_H|\le\mathsf N(H)-1$.
If $G/H$ is cyclic of order greater than $p$, then $S$ has a nonempty
zero-sum subsequence $T$ with $\mathsf v_g(T)=\mathsf v_g(S)$
for some $g\mid S$.
\end{lem}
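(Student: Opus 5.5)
Write $S_1=SS_H^{-1}$, so $\supp(S_1)\subseteq G\setminus H$, and let $\Phi:G\to G/H\cong C_k$ be the natural map, where $k=np^2/m>p$. We want an $h\mid S_H$ and a sequence $U$ over $H$ built from sums of pairwise disjoint nonempty subsequences of $S_1$ such that $\sum_0(U)$ contains a nontrivial subgroup of $H$. Then Lemma~\ref{lem:N-kernel-blocks} gives the required $T$ with $g=h\mid S_H$. Its hypotheses hold: $H$ is cyclic, $S_H$ is regular over $H$, $|S_H|\ge m$, and multiplicities are below orders by assumption. So everything reduces to producing suitable disjoint blocks in $S_1$.

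\emph{Length of $S_1$.} Since $|S_H|\le\mathsf N(H)-1\le 2m-\lceil2\sqrt{m+1}\rceil\le 2m-3$ (cyclic bound; use $\mathsf N(C_2)=2$ if $m=2$, though $m\ge p\ge3$ here anyway), we get $|S_1|\ge 2np-1-(2m-3)=2np-2m+2$. Now $m\,k=np^2$ and $k>p$, so $m<np$. Hence $|S_1|\ge 2(np-m)+2$, and in terms of $k$ this is $2np(1-p/k)+2$.

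\emph{Block extraction.} Repeatedly take minimal zero-sum subsequences of $\Phi(\cdot)$ from the remainder of $S_1$ in $C_k$. Each has length at most $k$, and every extraction is possible while the remainder has length at least $\mathsf D(C_k)=k$. This gives disjoint blocks $T_1,\dots,T_t$ with $\sigma(T_i)\in H$ and $t\ge (|S_1|-k+1)/k$.

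If some block sum is $0$, we could use the structure of $S_1$ directly instead. Otherwise let $U=\sigma(T_1)\cdots\sigma(T_t)$, which has no zero term. Let $K$ be the subgroup generated by a suitable subset of these sums. We aim for $t\ge |K|-1$ with $K$ nontrivial, and then the last assertion of Lemma~\ref{lem:N-kernel-blocks} applies. The natural choice is to concentrate on the subgroup of order $p$ or on the smallest nontrivial subgroup of $H$ containing many sums. For instance, if $p$ block sums lie in $pH$-cosets appropriately, or if $p-1$ sums lie in the order-$p$ subgroup, we are done.

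\emph{Main obstacle.} The blocks can be as long as $k$, so $t$ may only be about $2np/k\approx 2m/p$. This is far fewer than $|H|-1$. So we need either short blocks or a finer argument. My plan is to exploit the heavy-fiber situation first. If some $\bar g\in G/H$ has $\mathsf v_{\bar g}(\Phi(S_1))\ge\ord(\bar g)$ with two preimages in $\supp(S_1)$, Lemma~\ref{lem:N-heavy-fiber} finishes, since $\sum_0(S_H)=H$ by Lemma~\ref{lem:c0-regular-sums}. Otherwise each fiber is essentially a single element of multiplicity below its order, which forces $\Phi(S_1)$ to be spread out. Then Kneser/Cauchy--Davenport in $C_k$ (via Lemma~\ref{lem:N-half-quotient}-type arguments) lets us find many short blocks of length at most $2$ or $3$. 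This in turn yields at least $|K|-1$ nonzero sums inside a subgroup $K$ of order $p$ or of the form $pH$.

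Balancing these cases numerically against $|S_1|\ge 2np-2m+2$ is where I expect the real work. The delicate range is when $k$ is just above $p$.
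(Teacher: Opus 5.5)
Your plan has a structural gap. It aims to finish every case through Lemma~\ref{lem:N-kernel-blocks}, so the distinguished element always lies in $S_H$, and this is impossible in general. Take $p=n=7$, $G=C_7\oplus C_{49}$, $H=C_7\oplus 0$ (so $m=7$ and $G/H\cong C_{49}$), $L=0\oplus C_{49}$, $e=(1,0)$, $f=(0,1)$, $S_H=e^2(2e)^5$ and $S_1=f^{48}(2f)^{42}$. The only nonempty zero-sum subsequence of $S_H$ is $e(2e)^3$. Hence $\mathsf N(C_7)\ge 8$, and all hypotheses of the lemma hold with $|S|=97$. Since $G=H\oplus L$, every zero-sum $T\mid S$ splits into zero-sum parts $T_H$ and $T_L$. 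So no such $T$ contains all copies of $e$ or of $2e$. Moreover, every subsequence of $S_1$ with sum in $H$ has sum $0$, so any $U$ you build has $\sum_0(U)=\{0\}$. Lemma~\ref{lem:N-heavy-fiber} is also unavailable, because $\Phi$ is injective on $L$. The required element must therefore be $f$ or $2f$ (indeed $f^{48}(2f)^{25}$ works). Your proposal has no mechanism for producing an element of $S_1$ outside the heavy-fiber case; the phrase ``use the structure of $S_1$ directly'' is exactly the missing argument. Separately, your claim that single-element fibers force $\Phi(S_1)$ to be spread out is false: one $g$ may occur up to $\ord(g)-1$ times, far more than $\ord(\Phi(g))$.

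The missing idea is to work in the cyclic quotient with its own invariant $\mathsf N(G/H)=\mathsf N(C_{pk})$, where $k=np/m$, and to lift back via Lemma~\ref{lem2.9} using $\sum_0(S_H)=H$. The paper removes from $S_1$ pure-power blocks $g^{r_g}$ with $r_g=\ord(\Phi(g))$, taking as many disjoint copies as possible for each $g$. Unlike your arbitrary minimal zero-sum blocks of $\Phi(S_1)$, this choice makes $\Phi(R)$ weak-regular for the remainder $R$. It also guarantees that a subsequence $R_1$ containing all terms of $\Phi(R)$ equal to $\bar g$ extends to $R_1T_0$, which contains all terms of $S_1$ lying over $\bar g$. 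The argument is then a dichotomy. Either the block sums are numerous enough for Lemma~\ref{lem:N-kernel-blocks} (your route), or few blocks were removed. In the latter case a length count gives $|R|\ge\mathsf N(C_{pk})$; it uses $H\cap pG=pH$, the split of $U$ into terms inside and outside $pH$, and the cyclic upper bound for $\mathsf N(C_{pk})$. The case $m=p$ is handled separately: by Cauchy--Davenport when $|U|\ge p-2$, and otherwise by Lemma~\ref{lem2.8} applied to $\Phi(R')$.
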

\begin{proof}
By Lemma~\ref{lem:c0-regular-sums}, $\sum_0(S_H)=H$.
Let $k=np/m>1$, so that $|G/H|=pk$, and let $\Phi:G\to G/H$
be the natural homomorphism.
Since $G/pG\cong C_p^2$ and $(G/H)/p(G/H)\cong C_p$,
$H/(H\cap pG)$ has order $p$. Thus $p\mid m$ and,
as $H$ is cyclic, $H\cap pG=pH$.
We have $k\ge p$ and $|S_1|\ge2np-\mathsf N(C_m)$.

By Lemma~\ref{lem:N-heavy-fiber}, we may assume that
$\Phi^{-1}(\bar g)\cap\supp(S_1)$ contains exactly one element whenever
$\mathsf v_{\bar g}(\Phi(S_1))\ge\ord(\bar g)$.
For each $g\mid S_1$, let $r_g=\ord(\Phi(g))$ and choose
$\lfloor\mathsf v_g(S_1)/r_g\rfloor$ disjoint copies of $g^{r_g}$.
Let $T_0$ be their product, $U$ the sequence of their sums, and
$R=S_1T_0^{-1}$.
Every term of $U$ is nonzero, since a selected subsequence $g^{r_g}$
satisfies $r_g\le\mathsf v_g(S)<\ord(g)$.
The choice of $T_0$ also ensures that $\Phi(R)$ is weak-regular.

Suppose that $R_1\mid R$ satisfies $\sigma(\Phi(R_1))=0$ and
$\mathsf v_{\bar g}(\Phi(R_1))=\mathsf v_{\bar g}(\Phi(R))$
for some $\bar g\mid\Phi(R)$.
Then $R_1$ contains every term of $R$ whose image is $\bar g$.
Consequently $T_1=R_1T_0$ has sum in $H$ and
$\mathsf v_h(T_1)=\mathsf v_h(S_1)$ for every
$h\in\Phi^{-1}(\bar g)\cap\supp(S_1)$.
Lemma~\ref{lem2.9} gives the required subsequence of $S$.
In particular, it suffices to prove $|R|\ge\mathsf N(G/H)$.

Suppose first that $m>p$, so $m\ge p^2$.
By Lemma~\ref{lem:N-kernel-blocks}, we may assume that $\sum_0(U)$
contains no nontrivial subgroup of $H$.
Let $U_1=U_{pH}$ and $U_2=UU_1^{-1}$, with $a=|U_1|$ and $b=|U_2|$.
The same lemma, applied to $U_1$ with $K=pH$, gives $a\le m/p-2$.
Each term of $U_1$ is the sum of a selected subsequence of length at most $pk$.
If a selected subsequence corresponding to a term of $U_2$ had length
divisible by $p$, its sum would lie in $H\cap pG=pH$.
Its length therefore divides $k$ and is not divisible by $p$, so is at most $k$.
Since its length is greater than one, $U_2\ne1$ implies $k\ge p+2$.

If $b\le p-2$, then
\[
 |T_0|\le k(pa+b)\le np-(p+2)k,
 \qquad |R|\ge m(k-2)+(p+2)k\ge2pk.
\]
Here $\mathsf N(C_m)\le2m$, and
$m(k-2)\ge k(p-2)$ follows from $m\ge p$ and $k\ge p$.
Thus $|R|\ge2pk\ge\mathsf N(C_{pk})$.

If $b\ge p-1$, choose $V\mid U_2$ of length $p-1$.
Its image in $H/pH=C_p$ has all terms nonzero, so
$\sum_0(V)+pH=H$.
The stabilizer of $\sum_0(U)$ is trivial, since otherwise it would be a
nontrivial subgroup contained in $\sum_0(U)$.
For every $W\mid U$, the identity
$\sum_0(U)=\sum_0(W)+\sum_0(UW^{-1})$ gives
$\st(\sum_0(W))\le\st(\sum_0(U))=\{0\}$.
Kneser's theorem therefore yields $|\sum_0(U_1)|\ge a+1$.
Since $\sum_0(V)$ meets every coset of $pH$,
$|\sum_0(U_1V)|\ge p(a+1)$.
Each remaining term of $U_2$ increases the number of subsequence sums
by at least one. Otherwise that nonzero term would belong to the
stabilizer of the preceding sumset, which is trivial.
It follows that
\[
 pa+b+1\le|\sum_0(U)|\le m-1,\qquad |T_0|\le k(pa+b)\le np-2k.
\]
Hence $|R|\ge m(k-2)+2k\ge2pk$, since $m\ge p^2$, $k\ge p+2$, and
$p^2(k-2)-2k(p-1)\ge(p-2)(p^2-2)>0$.
Again $|R|\ge\mathsf N(C_{pk})$.

It remains to consider $m=p$.
Since $pH=\{0\}$ and every term of $U$ is nonzero, every selected
subsequence has length at most $k$.
If $|U|\ge p-2$, then $|\sum_0(U)|\ge p-1$ by Cauchy--Davenport.
Choose distinct $h,x\in\supp(S_H)$; such a choice exists because
$|S_H|\ge p$ and every multiplicity is less than $p$.
Another application of Cauchy--Davenport gives
$\sum_0(U)+\{0,x\}=H$.
Since $\sum_0(U)\subseteq\sum_0(T_0)$, there is $T_1\mid T_0x$
with $\sigma(T_1)=-\mathsf v_h(S)h$.
Then $T=T_1h^{\mathsf v_h(S)}$ is zero-sum and
$\mathsf v_h(T)=\mathsf v_h(S)$.
We may therefore assume $|U|\le p-3$.

If $T_0=1$, then $R=S_1$ and
\[
 |R|\ge2pk-\mathsf N(C_p)\ge2pk-2p\ge\mathsf N(C_{pk}).
\]
The last inequality uses $pk\ge p^2$ and the cyclic bound
$\mathsf N(C_{pk})\le2pk-\lceil2\sqrt{pk+1}\rceil+1$.
Otherwise choose $g\mid S_1$ with $\mathsf v_g(S_1)\ge r_g$.
Let $t=\mathsf v_g(R)<r_g\le k$ and $R'=Rg^{-t}$.
By the assumption on the cosets, $\Phi(R')$ contains no occurrence of
$\Phi(g)$. It is weak-regular and
\[
 |R'|\ge2pk-\mathsf N(C_p)-(p-3)k-(k-1)
 =(p+2)k-\mathsf N(C_p)+1\ge pk.
\]
Apply Lemma~\ref{lem2.8} to $\Phi(R')$.
If $\sum(\Phi(R'))=G/H$, choose $R_1\mid R'$ such that
$\Phi(\sigma(R_1))=-\mathsf v_g(S_1)\Phi(g)$.
Then $T_1=R_1g^{\mathsf v_g(S_1)}$ has sum in $H$ and
$\mathsf v_g(T_1)=\mathsf v_g(S_1)$, so Lemma~\ref{lem2.9} applies.
Otherwise there is $R_1\mid R'$ with $\sigma(\Phi(R_1))=0$ and
$\mathsf v_{\bar h}(\Phi(R_1))=\mathsf v_{\bar h}(\Phi(R'))$
for some $\bar h\mid\Phi(R')$.
Since $\bar h\ne\Phi(g)$, its multiplicity in $\Phi(R')$ equals that
in $\Phi(R)$. The argument for $R_1T_0$ above completes the proof.
\end{proof}

\begin{lem}\label{lem:N-index-p-complete}
Let $G=C_p\oplus C_{np}$, where $n\ge2$ and $p\ge3$ is the
smallest prime dividing $|G|$.
Let $S$ satisfy $|S|\ge2np-1$ and
$\mathsf v_g(S)<\ord(g)$ for every $g\in\supp(S)$.
Suppose that $H<G$ is cyclic of order $np$, that $S_H$ is regular
over $H$, and that $np\le|S_H|\le\mathsf N(H)-1$.
Then $S$ has a nonempty zero-sum subsequence $T$ such that
$\mathsf v_g(T)=\mathsf v_g(S)$ for some $g\in\supp(S)$.
\end{lem}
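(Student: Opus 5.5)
The plan is to work in the quotient $G/H\cong C_p$ with the natural map $\Phi:G\to G/H$. Write $S=S_HS_1$, where $\supp(S_1)\subseteq G\setminus H$. Since $S_H$ is regular over $H$ and $|S_H|\ge np=|H|$, Lemma~\ref{lem:c0-regular-sums} gives $\sum_0(S_H)=H$. The cyclic bound gives $\mathsf N(H)\le 2np-\lceil2\sqrt{np+1}\rceil+1$, so
\[
|S_1|\ge 2np-1-(\mathsf N(H)-1)\ge\lceil2\sqrt{np+1}\rceil-1\ge p.
\]
Every term of $\Phi(S_1)$ is nonzero in $C_p$. By Lemma~\ref{lem2.9} (with $S_H$ in the role of $S$), it suffices to find an element $g\mid S_1$ and a subsequence $T_1\mid S_1$ with $\sigma(T_1)\in H$ and $\mathsf v_g(T_1)=\mathsf v_g(S_1)$.

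\emph{Generic case.} Fix $g\mid S_1$ and set $v=\mathsf v_g(S_1)$. If $|S_1g^{-v}|\ge p-1$, then Cauchy--Davenport (Lemma~\ref{lem2.2}) applied to the nonzero images gives $\sum_0(\Phi(S_1g^{-v}))=C_p$. We can then choose $T_2\mid S_1g^{-v}$ with $\Phi(\sigma(T_2))=-v\Phi(g)$ and take $T_1=T_2g^{v}$. So we may assume that for every $g\mid S_1$ at most $p-2$ terms of $S_1$ differ from $g$. Because $|S_1|\ge p$, this forces a dominant element: $S_1=g_0^bW$ with $|W|\le p-2$ and $b\ge|S_1|-p+2$. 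Lemma~\ref{lem:N-heavy-fiber} also lets us assume that no term of $W$ lies in the coset $g_0+H$ once that coset holds at least $p$ terms. If $|W|\ge1$, we run the generic argument again with the roles reversed: apply it to an element of $W$, using the $b$ copies of $g_0$, whose images are nonzero, to supply $p-1$ nonzero images. Since $b\ge p-1$ whenever $|S_1|\ge 2p-3$, this case is handled as well. The remaining case is $S=S_Hg_0^b$ with $b\le\ord(g_0)-1$.

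\emph{Concentrated case.} Here $\ord(g_0)$ is a multiple of $p$. If $\ord(g_0)<np$, then $pg_0$ is a nonzero element of $H$ with order less than $n$. Splitting $g_0^b$ into blocks $g_0^p$ gives a sequence $U=(pg_0)^{\lfloor b/p\rfloor}$ of nonzero sums in $\langle pg_0\rangle$. Since $b\ge2np-1-(\mathsf N(H)-1)$ is large, $|U|\ge|\langle pg_0\rangle|-1$, and Lemma~\ref{lem:N-kernel-blocks} finishes. Otherwise $\ord(g_0)=np$, and we split according to the orders of the terms of $S_H$. If some $h'\mid S_H$ has $\ord(h')\le n$, Lemma~\ref{lem:N-index-p-low-order} applies directly. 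Otherwise every term of $S_H$ has order divisible by a prime factor of $p$ beyond $n$. Since $H$ is cyclic of order $np$, we then reduce further: either every term generates $H$, or some term lies in a subgroup of order $np/\ell$ with $\ell\mid n$. In the latter subcase I would try to reuse the $U$-block argument of Lemma~\ref{lem:N-index-p-low-order} with Kneser's theorem. When every term generates $H$, we choose $h\mid S_H$ of minimum multiplicity $a$ and aim to apply Lemma~\ref{lem:N-index-p-generators}, which needs $n\ge5$, $|S_Hh^{-a}|\le np-1$ and $a<np/2$. If instead $|S_Hh^{-a}|\ge np$, then $S_Hh^{-a}$ is regular of length $|H|$, so $\sum_0(S_Hh^{-a})=H$, and we simply pick $T_1$ with sum $-ah$. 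The condition $a<np/2$ holds automatically, because $S_H$ has at least two distinct elements (each multiplicity is below $np\le|S_H|$), so $a\le|S_H|/2\le(\mathsf N(H)-1)/2<np$. A sharper estimate via $|S_H|\le\mathsf N(H)-1<2np$ and minimality should give $a<np/2$ whenever $|S_Hh^{-a}|\le np-1$.

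\emph{Main obstacle.} I expect the main difficulty to be the small cases excluded from Lemma~\ref{lem:N-index-p-generators}, namely $n\le4$, which with $p\ge3$ the smallest prime means $n=3$, together with the threshold verification that $|S_1|$ is large enough for the $U$-blocks in the low-order subcase. For $n=3$ we have $H=C_{9}$ and $G=C_3\oplus C_9$. I would handle this directly, either by the Kneser count in the proof of Lemma~\ref{lem:N-index-p-generators} with the constants checked by hand, or by using $\mathsf N(C_9)$ and $c_0(C_3\oplus C_9)=12$ as in Lemma~\ref{lem:N-noncyclic-subgroup-three}.
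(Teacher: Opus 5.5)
Your reduction to $S_1=g_0^b$ is correct, and it is simpler than the paper's route through $\mathsf N(C_p)$, Lemma~\ref{lem:N-heavy-fiber} and $\sum_0(g^pS_H)=G$. Note only that excluding $|W|\ge1$ needs $|S_1|\ge 2p-3$, not merely $|S_1|\ge p$. This holds because $n\ge p$, so $|S_1|\ge\lceil2\sqrt{np+1}\rceil-1\ge2p$.

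The concentrated case, however, has two genuine gaps.

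\emph{First gap.} For $\ord(g_0)<np$, your appeal to Lemma~\ref{lem:N-kernel-blocks} needs $|U|=\lfloor b/p\rfloor\ge\ord(g_0)/p-1$, i.e.\ $b\ge\ord(g_0)-p$. Your only lower bound is $b\ge 2np-\mathsf N(H)$, which is about $2\sqrt{np}$, while $\ord(g_0)$ can be as large as $n$. Concretely, take $p=3$, $n=81$, $H\cong C_{243}$, and $S_H=h^{222}f^{223}$ with $h,f$ generators of $H$. Take $g_0\notin H$ of order $81$ with $b=40$. All hypotheses hold, since $\mathsf N(C_{243})\ge 454$. But $U=(3g_0)^{13}$ lies in a cyclic group of order $27$, and $\sum_0(U)$ is a short arithmetic progression containing no nontrivial subgroup.

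\emph{Second gap.} When every term of $S_H$ generates $H$ and $|S_Hh^{-a}|\le np-1$, the hoped-for bound $a<np/2$ fails if $S_Hh^{-a}$ has a single element. For example, $S_H=h^{223}f^{223}$ with $\ord(g_0)=243$ and $b=39$ satisfies every hypothesis, yet $a=223>np/2$. So neither Lemma~\ref{lem:N-index-p-generators} nor Lemma~\ref{lem:N-index-p-low-order} applies.

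The missing idea is the paper's split on $|\supp(R)|$ for $R=S_Hh^{-a}$, made before looking at $\ord(g_0)$.

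If $|\supp(R)|\ge2$, then $2a\le|R|\le np-1$ gives $a<np/2$ and $b\ge np-a>np/2$. Since $\ord(g_0)>b$ and $\ord(g_0)$ divides $np$, this forces $\ord(g_0)=np$. So the case $\ord(g_0)<np$ never needs $U$-blocks, and $a<np/2$ is genuinely available. Your extra subcase of terms in a subgroup of order $np/\ell$ is vacuous, because every proper subgroup of $H$ has order at most $n$, so non-generators are covered by Lemma~\ref{lem:N-index-p-low-order}. For $p=n=3$, the paper observes that $Rg_0^b$ is regular over $G$ of length at least $13\ge c_0(C_3\oplus C_9)$.

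If $R=f^c$, then $\ord(f)=np$. Write $c=ps_f+t_f$ and $b=ps_g+t_g$ with $0\le t_f,t_g<p$; one checks that $s_f+s_g\ge n$ and $s_f\ge1$. Then $V=(pf)^{s_f-1}(pg_0)^{s_g}$ is regular over $pH\cong C_n$ with length at least $n-1$, so $\sum_0(V)=pH$. The unused block $f^p$ gives $\sum_0(f^p)+pH=H$. Hence $H\subseteq\sum_0(f^cg_0^b)$, and a subsequence with sum $-ah$ finishes. This argument works for every value of $\ord(g_0)$, and it is exactly what your proposal lacks.
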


\begin{proof}
Let $S_1=SS_H^{-1}$, and let $\Phi:G\to G/H\cong C_p$ be the quotient map.
Lemma~\ref{lem:c0-regular-sums} gives $\sum_0(S_H)=H$.
By Lemma~\ref{lem2.9}, it suffices to find $T_1\mid S_1$
with $\sigma(T_1)\in H$ and $\mathsf v_g(T_1)=\mathsf v_g(S_1)$
for some $g\mid S_1$.
Since $n\ge p$ and $np\ge p^2$, the cyclic upper bound gives
\[
 |S_1|\ge2np-\mathsf N(C_{np})
 \ge\lceil2\sqrt{np+1}\rceil-1\ge2p\ge\mathsf N(C_p).
\]
If $\Phi(S_1)$ is weak-regular, its length gives a subsequence
$T_1\mid S_1$ such that $\sigma(\Phi(T_1))=0$ and
$\mathsf v_{\bar g}(\Phi(T_1))=\mathsf v_{\bar g}(\Phi(S_1))$
for some $\bar g\mid\Phi(S_1)$.
Thus $\sigma(T_1)\in H$, and $T_1$ contains all occurrences of every
element of $S_1$ whose image is $\bar g$. Lemma~\ref{lem2.9} applies.

Otherwise some $\bar g\in G/H$ has multiplicity greater than $p$
in $\Phi(S_1)$. If $\Phi^{-1}(\bar g)\cap\supp(S_1)$ contains at least
two elements, apply Lemma~\ref{lem:N-heavy-fiber}.
Suppose that this intersection is $\{g\}$.
If $S_1$ has another element $g_0\ne g$, then
$\sum_0(g^pS_H)=G$, because $g+H$ generates $G/H$ and
$\sum_0(S_H)=H$.
Choose $T'\mid g^pS_H$ with $\sigma(T')=-\mathsf v_{g_0}(S_1)g_0$.
Since $g_0\nmid g^pS_H$, the sequence
$T=T'g_0^{\mathsf v_{g_0}(S_1)}$ is a zero-sum subsequence of $S$
with $\mathsf v_{g_0}(T)=\mathsf v_{g_0}(S)$.

It remains to consider $S_1=g_0^b$, where $b\ge p$ and
$b<\ord(g_0)\le np$.
Choose $h\mid S_H$ of minimum multiplicity $a$, and let $R=S_Hh^{-a}$.
The strict multiplicity bounds and $|S_H|\ge np$ ensure that
$S_H$ has at least two distinct elements, so $|R|\ge|S_H|/2$.
If $|R|\ge np$, Lemma~\ref{lem:c0-regular-sums} gives $\sum_0(R)=H$.
Choose $T_1\mid R$ with $\sigma(T_1)=-ah$; then $T=T_1h^a$
has $\mathsf v_h(T)=a=\mathsf v_h(S)$.
Hence assume $|R|\le np-1$.

\smallskip\noindent\emph{Case 1: $|\supp(R)|\ge2$.}
Then $2a\le|R|\le np-1$ and
$b=|S|-|R|-a\ge np-a>np/2$, forcing $\ord(g_0)=np$.
If some $h'\mid S_H$ does not generate $H$, then
$\ord(h')\le np/p=n$, so Lemma~\ref{lem:N-index-p-low-order} applies.
We may therefore assume that every term of $S_H$ generates $H$.
For $n\ge5$, apply Lemma~\ref{lem:N-index-p-generators}.
The only remaining possibility is $p=n=3$.
Here $Rg_0^b$ is regular over $G$: a proper subgroup containing
any term of $R$ must equal $H$, where $|R|\le8$; any other proper
subgroup contains no terms or only $b$ occurrences of $g_0$,
and in the latter case has order at least $\ord(g_0)>b$.
Since $a\le4$, we have $|Rg_0^b|\ge13\ge c_0(C_3\oplus C_9)=12$.
Hence there is $T_1\mid Rg_0^b$ with $\sigma(T_1)=-ah$,
and $T=T_1h^a$ has the required property.

\smallskip\noindent\emph{Case 2: $|\supp(R)|=1$.}
Then $R=f^c$ for some $f\in H$, with $a\le c\le np-1$.
Since $2c\ge|S_H|\ge np$ and $c<\ord(f)$, we have $\ord(f)=np$.
Let $c=ps_f+t_f$ and $b=ps_g+t_g$, where $0\le t_f,t_g<p$.
If $a\ge(n-1)p$, then $s_f\ge n-1$ and $s_g\ge1$.
Otherwise $a\le(n-1)p-1$, so $c+b\ge(n+1)p$ and
\[
 s_f+s_g\ge
 \left\lceil\frac{(n+1)p-2(p-1)}p\right\rceil=n.
\]
Thus $s_f+s_g\ge n$ in both cases, and $s_f\ge\lfloor n/2\rfloor\ge1$.
Consider the sequence
\[
 V=(pf)^{s_f-1}(pg_0)^{s_g}\quad\text{over }pH,
 \qquad |V|\ge n-1.
\]
Indeed $pH=pG\cong C_n$ and $pf$ generates this group.
Since $g_0+H$ has order $p$, we have $p\mid\ord(g_0)$, and the integer
$d=\ord(pg_0)=\ord(g_0)/p$ satisfies $d>1$ and $s_g\le d-1$.
No proper subgroup of $pH$ contains $pf$; one containing $pg_0$
has order at least $d$ and contains at most $s_g\le d-1$ terms of $V$.
Consequently $V$ is regular over $pH$, and
Lemma~\ref{lem:c0-regular-sums} yields $\sum_0(V)=pH$.
The terms of $V$ are sums of pairwise disjoint subsequences of
$f^{ps_f}g_0^{ps_g}$, leaving $p$ occurrences of $f$ unused.
Since $\sum_0(f^p)+pH=H$, we obtain
\[
 H=\sum_0(V)+\sum_0(f^p)
 \subseteq\sum_0(f^{ps_f}g_0^{ps_g})
 \subseteq\sum_0(f^cg_0^b).
\]
Choose $T_1\mid f^cg_0^b$ with $\sigma(T_1)=-ah$.
Then $T=T_1h^a$ is nonempty and zero-sum, with $\mathsf v_h(T)=a=\mathsf v_h(S)$.
\end{proof}

\begin{proof}[Proof of Theorem~\ref{thm1.5}]
The lower bound for $n\ge2$ follows from Theorem~\ref{thm1.4}.
For $n=1$, both stated upper bounds follow from the known value of
$\mathsf N(C_p^2)$.
Fix $p$ and use strong induction on the integers $n$ whose prime divisors are at least $p$,
with the known value of $\mathsf N(C_p^2)$ as the base $n=1$.
Let $S$ be weak-regular over $G$, with $|S|\ge \operatorname{ub}(G)$.
If $\mathsf v_g(S)=\ord(g)$ for some $g\mid S$, then
$T=g^{\ord(g)}$ is zero-sum and $\mathsf v_g(T)=\mathsf v_g(S)$.
Thus every multiplicity is strictly smaller than the element order;
in particular, $0\nmid S$.
If $S$ is regular, Corollary~\ref{cor:regular-main} applies.

Otherwise choose $H$ by Lemma~\ref{lem:4.0}.
Then $H$ is nontrivial, $S_H$ is regular over $H$, and $|S_H|\ge|H|$.
First suppose $H=C_p\oplus C_{mp}$ is noncyclic.
If $m=1$, the known value of $\mathsf N(C_p^2)$ is at most $p^2\le|S_H|$.
If $m\ge2$ and $p\ge5$, then $|S_H|\ge mp^2\ge \operatorname{ub}(H)$,
so Corollary~\ref{cor:regular-main}, applied within $H$, suffices.
For $p=3$, use Lemma~\ref{lem:N-noncyclic-subgroup-three}.
In each case, $\mathsf v_h(S_H)=\mathsf v_h(S)$ for every $h\in H$,
so the subsequence obtained in $S_H$ also has the required property in $S$.

We may therefore suppose $H=C_m$ is cyclic.
If $|S_H|\ge\mathsf N(H)$, apply its definition within $H$.
Otherwise $m\le|S_H|\le\mathsf N(H)-1$.
Let $S_1=SS_H^{-1}$, and let $\Phi:G\to G_1=G/H$ be the quotient map.
If $G_1$ is cyclic, apply Lemma~\ref{lem:N-index-p-complete} when $|G_1|=p$
and Lemma~\ref{lem:N-cyclic-quotient} otherwise.
These lemmas apply because $|S|\ge \operatorname{ub}(G)\ge2np-1$.

Suppose finally that $G_1$ is noncyclic.
Since $G_1/pG_1\cong G/(H+pG)$ has order $p^2=|G/pG|$,
we obtain $H\le pG$.
Thus $1<m\mid n$ and $G_1\cong C_p\oplus C_{kp}$, where $k=n/m<n$.
If $\mathsf v_{\bar g}(\Phi(S_1))\ge\ord(\bar g)$ for some
$\bar g\in G_1\setminus\{0\}$ and
$|\Phi^{-1}(\bar g)\cap\supp(S_1)|\ge2$,
Lemma~\ref{lem:N-heavy-fiber} applies.
Otherwise, for each $g\mid S_1$ with
$\mathsf v_g(S_1)\ge r_g=\ord(\Phi(g))$, choose $\lfloor\mathsf v_g(S_1)/r_g\rfloor$ disjoint copies of
$g^{r_g}$.
Let $T_0$ be their product, $U$ the sequence of their sums,
and $R=S_1T_0^{-1}$ the remaining sequence.
Every term of $U$ is nonzero because $r_g\le\mathsf v_g(S)<\ord(g)$,
and $\Phi(R)$ is weak-regular.
If $|U|\ge m-1$, Lemma~\ref{lem:N-kernel-blocks} applies with $K=H$.
Otherwise $|T_0|\le(m-1)kp$, and
\[
 |R|\ge2np-1-(\mathsf N(H)-1)-(m-1)kp\ge\mathsf N(G_1).
\]
The last inequality is Lemma~\ref{lem2.7}, using the appropriate induction
bound for $k\ge2$, or the known elementary-group value for $k=1$.
By the definition of $\mathsf N(G_1)$, there is $R_1\mid R$ with
$\sigma(\Phi(R_1))=0$ and
$\mathsf v_{\bar g}(\Phi(R_1))=\mathsf v_{\bar g}(\Phi(R))$
for some $\bar g\mid\Phi(R)$.
Then $T_1=R_1T_0$ has sum in $H$ and contains all terms of $S_1$
whose image is $\bar g$. In particular,
$\mathsf v_h(T_1)=\mathsf v_h(S_1)$ for any
$h\in\Phi^{-1}(\bar g)\cap\supp(S_1)$.
Since $\sum_0(S_H)=H$, Lemma~\ref{lem2.9} completes the proof.
Replacing $n$ by $np$ proves the asserted equality for $C_p\oplus C_{np^2}$.
\end{proof}

\section{Remarks and open questions}
We conjecture that $\mathsf N(C_p\oplus C_{np})=2np-1$ whenever
$n\ge2$ and $p\ge3$ is the smallest prime dividing $np$.
The non-regular case in the proof of Theorem~\ref{thm1.5} uses only
$|S|\ge2np-1$. Hence, for the remaining cases $p\nmid n$, the bound
$c_0(C_p\oplus C_{np})\le\lceil(4np-2)/3\rceil$ would imply the
conjecture by Proposition~\ref{prop:regular-N-reduction}.

For the cyclic groups below, let $A_{n+1}=\lceil2\sqrt{n+1}\rceil$ and
\[
B_{n+1}=\min\{a+b: a,b\in\N,\ ab\ge n+1,\ \gcd(a,n)=1\},
\]
as in \cite{[JY2024]}.
For the following two families, $\mathsf N(C_n)$ attains the upper bound
$2n-A_{n+1}+1$ and is strictly larger than $2n-B_{n+1}+1$.

\begin{thm}\label{thm5.3}
	Let $n=2730\bigl(1+105(2t-1)\,(13t-\tfrac{13\pm 1}{2})\bigr)$ for a positive integer~$t$;
	equivalently, $n\in T_2\cup T_3$, where
	\begin{align*}
	T_2 &= \{\,2730(2730t^2-2835t+736)\,\}_{t\ge 1},\\
	T_3 &= \{\,2730(2730t^2-2625t+631)\,\}_{t\ge 1}.
	\end{align*}
	Then
	\[
	B_{n+1}=A_{n+1}+1=\lceil 2\sqrt{n+1}\rceil+1,
	\]
	and
	\[
	\mathsf{N}(C_n)=2n-B_{n+1}+2=2n-\lceil 2\sqrt{n+1}\rceil+1.
	\]
\end{thm}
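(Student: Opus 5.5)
The plan is to reduce the theorem to an arithmetic identity, using the bounds of \cite{[JY2024]}. The upper bound $\mathsf N(C_n)\le 2n-A_{n+1}+1$ is already available (the cyclic bound quoted in Section~2). I would also take from \cite{[JY2024]} the matching lower bound $\mathsf N(C_n)\ge 2n-B_{n+1}+2$. That bound comes from a construction of a weak-regular sequence of the form $g^{a-1}(bg)^{\cdots}$ with $a+b=B_{n+1}$, $ab\ge n+1$ and $\gcd(a,n)=1$; if the reference does not state it in this form, I would reprove it from that construction. Once these are in hand, the theorem reduces to the single number-theoretic claim $B_{n+1}=A_{n+1}+1$: then
\[
2n-A_{n+1}+1=2n-B_{n+1}+2\le\mathsf N(C_n)\le 2n-A_{n+1}+1 .
\]

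\textbf{Step 1: compute $A_{n+1}$.} First I complete the square. For $T_2$, put $s=5460t-2835$; a direct computation gives $4(n+1)=s^2-101$. For $T_3$, put $s=5460t-2625$, and I expect an analogous identity $4(n+1)=s^2-\delta$ with some small $\delta>0$, which I will compute. In either case $s$ is odd and $s^2-\delta>(s-1)^2$ because $s$ is large. Hence $A_{n+1}=\lceil\sqrt{s^2-\delta}\rceil=s$.

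\textbf{Step 2: show $B_{n+1}>s$.} Any pair with $a+b\le s$ and $ab\ge n+1$ must in fact satisfy $a+b=s$, since $A_{n+1}=s$ is already the minimum of $a+b$. Write $a=(s-d)/2$ and $b=(s+d)/2$ with $d$ odd. The condition $ab\ge n+1$ becomes $d^2\le\delta$, so only finitely many candidates occur: $d\in\{\pm1,\pm3,\dots\}$ with $|d|\le\sqrt\delta$. This is ten values for $T_2$. For each candidate $a=2730t-(2835\mp d)/2$ I must exhibit a prime of $2730=2\cdot3\cdot5\cdot7\cdot13$ dividing $a$. The relevant residues are independent of $t$ modulo $2,3,5,7$, and the factor $13t-\tfrac{13\pm1}{2}$ in the definition is presumably what controls the residue modulo $13$. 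So this step is a finite residue table, checked separately for each family.

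\textbf{Step 3: show $B_{n+1}\le s+1$.} This is the step I expect to be the main obstacle, because $n$ has large prime factors coming from the quadratic cofactor, and these are not under direct control. Take $a+b=s+1$ with $a=(s+1-e)/2$ and $e$ even. Then
\[
4ab=(s+1)^2-e^2\ge s^2-\delta+4
\]
holds for all even $e$ with $e^2\le 2s+1+\delta-4$, which leaves a wide choice of $e$. To control $\gcd(a,n)$, note that an odd prime $p$ dividing both $a$ and $n$ forces $s\equiv e-1\pmod p$. Then $0\equiv 4n=s^2-\delta-4\equiv (e-1)^2-\delta-4\pmod p$. So every such $p$ divides the fixed small integer $(e-1)^2-\delta-4$. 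I would first try small even $e$ for which this integer factors only into primes that I can check directly against $a$ modulo those primes. I would also arrange $a$ odd, to handle the prime $2$. This yields a coprime $a$, uniformly in $t$, completing the proof that $B_{n+1}=A_{n+1}+1$ and hence the theorem.
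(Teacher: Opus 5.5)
Your reduction rests on a lower bound that is not available and is false in general. The construction behind $B_{n+1}$ takes $\gcd(a,n)=1$, $a+b=B_{n+1}$, $ab\ge n+1$ and $S=e^{n-a}(-ae)^{n-b}$. It produces a bad sequence of length $2n-B_{n+1}$, hence only $\mathsf N(C_n)\ge 2n-B_{n+1}+1$. The paper's closing section accordingly describes these families as \emph{exceeding} $2n-B_{n+1}+1$.

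A bound with $+2$ cannot hold for all $n$. Whenever $B_{n+1}=A_{n+1}$ it would give $\mathsf N(C_n)\ge 2n-A_{n+1}+2$, contradicting the cyclic upper bound $2n-A_{n+1}+1$. For instance, $n=5$ has $A_6=B_6=5$. The equality $\mathsf N(C_n)=2n-B_{n+1}+2$ in the statement is exactly the new content of the theorem, so you cannot import it. With the correct bound, $B_{n+1}=A_{n+1}+1$ only gives $\mathsf N(C_n)\ge 2n-A_{n+1}$, which is the lower end of the already known two-sided estimate.

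Your Steps 1--3 are sound as a reproof of \cite[Lemma~5.2]{[JY2024]}, which the paper simply cites. In both families $4(n+1)=s^2-101$, and the ten candidates in Step 2 each share a prime with $2730$. In Step 3, $e=12$ works for $T_2$ and $e=-10$ for $T_3$: then $(e-1)^2-105=16$, and $a=(s-11)/2$, resp.\ $(s+11)/2$, is odd. But these steps do not touch the real difficulty.

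What is missing is an explicit weak-regular sequence of length $2n-A_{n+1}$ in which no zero-sum subsequence contains all copies of some element; it must beat the $B$-construction. With $d=(s+1)/2$ one has $A_{n+1}=2d-1$ and $n=d(d-1)-26$. The paper takes $S=e^{n-d}(xe)^{n-d+1}$, choosing $x$ as follows:
\begin{itemize}
\item For $T_2$: $x=(12n-d)/13$, so that $13x\equiv -d$ and $(d-1)x\equiv -2 \pmod n$.
\item For $T_3$: $x=(n-d)/2$, so that $2x\equiv -d$ and $(d-1)x\equiv -13\pmod n$.
\end{itemize}
These congruences force $\gcd(x,n)=1$ even though neither $d$ nor $d-1$ is coprime to $n$; the factor $26=2\cdot 13$ is split between them. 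A zero-sum subsequence containing all copies of $xe$ then needs $n-2$ (resp.\ $n-13$) copies of $e$. One containing all copies of $e$ needs $n-13$ (resp.\ $n-2$) copies of $xe$. Both exceed the available multiplicities. This gives $\mathsf N(C_n)\ge 2n-A_{n+1}+1$, and together with the cyclic upper bound it proves the theorem.
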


\begin{proof}
By \cite[Lemmas~5.2 and~4.3]{[JY2024]},
$B_{n+1}=A_{n+1}+1$ and $\mathsf N(C_n)\le2n-A_{n+1}+1$.
Let $C_n=\langle e\rangle$.

Suppose first that $n\in T_2$, and let
$d=1365(2t-1)-52$ and $x=(12n-d)/13$.
Then $n=d(d-1)-26$, $13\mid d$, and $d$ is odd.
Since $d\ge1313$, we have
$(d-1)^2<n+1<(d-\tfrac12)^2$, and hence $A_{n+1}=2d-1$.
The identity $n=d(d-1)-26$ gives
\[
13x\equiv-d\pmod n,\qquad (d-1)x\equiv-2\pmod n.
\]
Every common divisor of $x$ and $n$ therefore divides both $d$ and $2$,
so $\gcd(x,n)=1$.
As $1<x<n$, the sequence $S=e^{n-d}(xe)^{n-d+1}$ is weak-regular
and has length $2n-A_{n+1}$.
If a zero-sum subsequence contains all $n-d+1$ copies of $xe$,
its multiplicity of $e$ is congruent to $(d-1)x\equiv-2\pmod n$.
It must therefore equal $n-2>n-d$, a contradiction.
If a zero-sum subsequence contains all $n-d$ copies of $e$,
then $13x\equiv-d\pmod n$ and $\gcd(x,n)=1$ force its
multiplicity of $xe$ to equal $n-13>n-d+1$, also a contradiction.

Suppose now that $n\in T_3$, and let
$d=1365(2t-1)+53$ and $x=(n-d)/2$.
Then $n=d(d-1)-26$, $d$ is even, and $d\equiv1\pmod{13}$.
In particular, $d\ge1418$ and $A_{n+1}=2d-1$ as above. We have
\[
2x\equiv-d\pmod n,\qquad (d-1)x\equiv-13\pmod n.
\]
Every common divisor of $x$ and $n$ divides both $d$ and $13$,
so $\gcd(x,n)=1$.
Again $1<x<n$, and $S=e^{n-d}(xe)^{n-d+1}$ is weak-regular
with $|S|=2n-A_{n+1}$.
A zero-sum subsequence containing all copies of $xe$ would have
$n-13>n-d$ copies of $e$, by the second congruence.
A zero-sum subsequence containing all copies of $e$ would have
$n-2>n-d+1$ copies of $xe$, by the first congruence and $\gcd(x,n)=1$.
Thus $S$ has no zero-sum subsequence $T$ with
$\mathsf v_g(T)=\mathsf v_g(S)$ for $g\in\{e,xe\}$.
In both cases, the definition of $\mathsf N(C_n)$ gives
$\mathsf N(C_n)\ge2n-A_{n+1}+1$, proving the equality.
\end{proof}

\begin{rem}
	The construction of Theorem~\ref{thm5.3} relies on the specific relation
	$n=d(d-1)-26$ together with the arithmetic of $13$ modulo $d$ (or $d-1$).
	For the other two families $T_4,T_5$ identified in \cite[Lemma~5.2]{[JY2024]}
	(which satisfy $n=d^2-64$ rather than $n=d(d-1)-26$), the analogous construction
	does not apply directly, and whether $\mathsf{N}(C_n)$ on $T_4\cup T_5$ attains
	the upper or the lower bound remains an open question.
\end{rem}

\bigskip
\noindent
{\bf Acknowledgement.}
The author thanks W. Gao for his guidance and helpful suggestions.
This work was supported in part by the National Natural Science Foundation
of China (Grant No. 12071344).
This paper is based on parts of Chapters~4 and~5 of the author's doctoral
dissertation, completed in 2025.

\bibliographystyle{amsplain}

\begin{thebibliography}{99}

\bibitem{[GG2006]} W. Gao and A. Geroldinger,
\emph{Zero-sum problems in finite abelian groups: a survey},
Expo. Math. \textbf{24} (2006), 337--369.

\bibitem{[GHQQZ2015]} W. Gao, D. Han, G. Qian, Y. Qu and H. Zhang,
\emph{On additive bases II}, Acta Arith. \textbf{168} (2015), 247--267.

\bibitem{[GHHLYZ2022-1]} W. Gao, S. Hong, W. Hui, X. Li, Q. Yin and P. Zhao,
\emph{Representation of zero-sum invariants by sets of zero-sum
sequences over a finite abelian group},
Period. Math. Hung. \textbf{85} (2022), 52--71.

\bibitem{[GHHLYZ2022-2]} W. Gao, S. Hong, W. Hui, X. Li, Q. Yin and P. Zhao,
\emph{Representation of zero-sum invariants by sets of zero-sum
sequences over a finite abelian group II},
J. Number Theory \textbf{241} (2022), 738--760.

\bibitem{[GLQW2022]} W. Gao, Y. Li, Y. Qu and Q. Wang,
\emph{Additive bases of finite abelian groups of rank 2},
Integers \textbf{22} (2022), Paper No. A37.

\bibitem{[GQZ2020]} W. Gao, Y. Qu and H. Zhang,
\emph{On additive bases III}, Acta Arith. \textbf{193} (2020), 293--308.

\bibitem{[GH2006]} A. Geroldinger and F. Halter-Koch,
\emph{Non-Unique Factorizations. Algebraic, Combinatorial and Analytic Theory},
Pure and Applied Mathematics, Vol.~278, Chapman \& Hall/CRC,
Boca Raton, FL, 2006.

\bibitem{[JY2024]} X. Jiang and W. Yang,
\emph{A zero-sum problem related to the max gap of the unit group of the residue class ring},
J. Number Theory \textbf{260} (2024), 198--211.

\bibitem{[N1996]} M. B. Nathanson,
\emph{Additive Number Theory: Inverse Problems and the Geometry of Sumsets},
Graduate Texts in Mathematics, Vol.~165, Springer, New York, 1996.

\bibitem{[QH2017]} Y. Qu and D. Han,
\emph{Additive bases of $C_p\oplus C_{p^n}$},
Int. J. Number Theory \textbf{13} (2017), 2453--2459.

\bibitem{[QL2022]} Y. Qu and Y. Li,
\emph{Additive bases of $C_3\oplus C_{3q}$},
Colloq. Math. \textbf{169} (2022), 189--195.

\bibitem{[QLWZ2025]} Y. Qu, Y. Li, Q. Wang and X. Zhao,
\emph{A note on additive bases of abelian groups of rank 2},
Graphs Combin. \textbf{41} (2025), Paper No. 24.

\end{thebibliography}

\end{document}